\documentclass[11pt, reqno]{amsart}

\usepackage{amsmath, amsthm, amscd, amsfonts, amssymb, graphicx, color, mathtools, mathrsfs}
\usepackage[bookmarksnumbered, colorlinks, plainpages]{hyperref}
\usepackage{enumerate}
\usepackage{setspace}
\usepackage{multicol}
\usepackage[margin=1in]{geometry}
\usepackage{comment}
\usepackage{booktabs}
\usepackage{caption}
\usepackage{subcaption}
\usepackage{pgfplots}

\theoremstyle{definition}
\newtheorem{theorem}{Theorem}[section]
\newtheorem{lemma}[theorem]{Lemma}
\newtheorem{proposition}[theorem]{Proposition}
\newtheorem{algorithm}[theorem]{Algorithm}

\newtheorem{remark}[theorem]{Remark}
\numberwithin{equation}{section}

\newcommand{\bff}{\boldsymbol}
\newcommand{\bb}{\mathbb}

\newcommand{\dt}{\mathrm{d}t}
\newcommand{\dtt}{\mathrm{d}_t}
\newcommand{\Dtt}{\mathcal{D}_t}
\newcommand{\ddt}{\frac{\mathrm{d}}{\mathrm{d}t}}
\newcommand{\dx}{\mathrm{d}x}

\newcommand{\norm}[2]{\left\|{#1}\right\|_{#2}}
\newcommand{\inpro}[2]{\left\langle#1,#2\right\rangle}
\newcommand{\abs}[1]{\left|{#1}\right|}
\newcommand{\F}[1]{\mathcal{F}[{#1}]}

\allowdisplaybreaks

\begin{document}
	\setcounter{page}{1}
	
	\title[Error analysis of SAV FEM for the Landau--Lifshitz--Bloch equation]
	{Error analysis of scalar auxiliary variable finite element methods for the Landau--Lifshitz--Bloch equation}
	
	\author[Agus L. Soenjaya]{Agus L. Soenjaya}
	\address{School of Mathematics and Statistics, The University of New South Wales, Sydney 2052, Australia}
	\email{\textcolor[rgb]{0.00,0.00,0.84}{a.soenjaya@unsw.edu.au}}

	\keywords{Landau--Lifshitz--Bloch, scalar auxiliary variable, finite element method, BDF2, error estimates, micromagnetics}
	\subjclass{35K59, 35Q60, 65M12, 65M60}
		
	\date{\today}
	
	\begin{abstract}
	The Landau--Lifshitz--Bloch (LLB) equation is a well-established micromagnetic model for describing magnetisation dynamics in ferromagnets at elevated temperatures. In this paper, we propose and analyse two fully discrete, conforming finite element schemes based on the scalar auxiliary variable (SAV) approach for solving the LLB equation in the high-temperature regime above the Curie point. The first scheme employs a semi-implicit Euler time discretisation, while the second is based on a linearly extrapolated BDF2 method. Both schemes are linear, unconditionally stable with respect to the energy norm, and satisfy a discrete energy law involving an SAV-based energy functional that approximates the true micromagnetic energy. Under suitable regularity assumptions, we establish unconditional energy stability and derive optimal-order error estimates in $\mathbb{L}^2, \mathbb{H}^1$, and $\mathbb{L}^\infty$ norms. To the best of our knowledge, this is the first rigorous error analysis of a fully discrete SAV-based method for a quasilinear vector-valued problem, as well as the first linear, energy-stable scheme for the LLB equation in the high-temperature regime that achieves second-order temporal accuracy. Numerical experiments validate the theoretical convergence rates and compare the efficiency of the proposed schemes with existing methods; additional tests with adaptive time stepping illustrate the behaviour of the SAV-based modified energy in relation to the physical energy.
	\end{abstract}
	\maketitle
	\tableofcontents
	
\section{Introduction}

The Landau--Lifshitz--Bloch (LLB) equation is a fundamental model for describing the dynamics of magnetisation in ferromagnetic materials at elevated temperatures~\cite{AtxHinNow16, ChuNie20, ChuNowChaGar06, Gar97}, particularly near or above the Curie temperature, where the standard Landau--Lifshitz model becomes inadequate. The LLB equation incorporates both transverse and longitudinal fluctuation mechanisms, making it well-suited for modelling thermally induced magnetisation dynamics in heat-assisted magnetic recording (HAMR), for instance. Its nonlinear structure, however, presents considerable analytical and numerical challenges. Therefore, the development and analysis of stable and accurate numerical methods for solving the LLB equation are crucial for both theoretical understanding and practical applications in computational micromagnetics.

We will introduce the model next. Let $\mathscr{D}$ be a bounded domain. The dynamics of the magnetisation $\bff{u}:[0,T]\times \mathscr{D}\to \bb{R}^3$ under the influence of an effective field $\bff{H}:[0,T]\times \mathscr{D}\to \bb{R}^3$ above the Curie temperature is given by the LLB equation, which is a vector-valued second-order quasilinear PDE~\cite{Le16}. The problem reads:
\begin{subequations}\label{equ:llb a}
	\begin{alignat}{2}
		\label{equ:llb eq1}
		&\partial_t \bff{u}
		=
		-
		\gamma \bff{u} \times \bff{H}
		+
		\alpha \bff{H}
		\,
		\qquad && \text{for $(t,\bff{x})\in(0,T)\times\mathscr{D}$,}
		\\
		\label{equ:llb eq2}
		&\bff{H}=\sigma \Delta \bff{u} - \kappa\mu \bff{u} - \kappa |\bff{u}|^2 \bff{u}
		\qquad && \text{for $(t,\bff{x})\in(0,T)\times\mathscr{D}$,}
		\\
		\label{equ:llb init}
		&\bff{u}(0,\bff{x})= \bff{u}_0(\bff{x}) 
		\qquad && \text{for } \bff{x}\in \mathscr{D},
		\\
		\label{equ:llb bound}
		&\displaystyle{
			\frac{\partial \bff{u}}{\partial \bff{n}}= \bff{0}}
		\qquad && \text{for } (t,\bff{x})\in (0,T) \times \partial \mathscr{D},
	\end{alignat}
\end{subequations}
where $\gamma>0$ is the electron gyromagnetic ratio, $\alpha>0$ is the damping coefficient, $\sigma>0$ is the exchange damping coefficient, $\kappa=(2\chi)^{-1}$ is a positive constant related to the longitudinal susceptibility $\chi$ of the ferromagnet, and $\mu>0$ is a constant related to the equilibrium magnetisation magnitude.

The effective field $\bff{H}$ is the negative variational derivative of the micromagnetic energy functional $\mathcal{E}$ defined by
\begin{align}\label{equ:energy}
	\mathcal{E}[\bff{u}] := \int_{\mathscr{D}} \left(\frac{\sigma}{2} \abs{\nabla \bff{u}}^2 + \frac{\kappa\mu}{2} \abs{\bff{u}}^2 + \frac{\kappa}{4} (\abs{\bff{u}}^4+1) \right) \dx.
\end{align}
In \eqref{equ:energy}, we consider contributions from the exchange energy and the Ginzburg--Landau internal energy, leading to the effective field as stated in \eqref{equ:llb eq2}. Other lower order terms, such as the anisotropy energy and the external magnetic energy, could be taken into account without difficulties, but are omitted for simplicity of presentation. It is known that a system described by \eqref{equ:llb a} dissipates energy over time~\cite{LeSoeTra24, Soe25}.

We now review relevant mathematical results for the LLB equation in the regime above the Curie temperature. The existence of a global weak solution to \eqref{equ:llb a} in smooth bounded domains is established in~\cite{Le16}, while the unique existence of a global strong solution to \eqref{equ:llb a} in polytopal domains is proven in \cite{LeSoeTra24}. The existence of a global smooth solution in an unbounded domain, under an additional smallness condition on the initial data when $d=3$, is shown in~\cite{PuYan22}.

Several numerical schemes for solving \eqref{equ:llb a} have been proposed in the literature. An energy-stable, but \emph{nonlinear}, finite element method for a \emph{regularised} version of \eqref{equ:llb a} is proposed and analysed in \cite{Soe24}, where optimal order convergence is demonstrated. This regularised problem includes the Landau--Lifshitz--Baryakhtar equation~\cite{Bar84, SoeTra23}, another micromagnetic model at elevated temperatures. A different regularisation of the LLB equation is introduced in \cite{LeSoeTra24}, where a linear, though not necessarily energy-stable, finite element scheme is proposed. In~\cite{GolJiaLe25}, a simpler biharmonic regularisation was employed in the numerical treatment of the stochastic LLB equation to improve the regularity of the solution; see also~\cite{Soe26} for related results.

An optimally convergent finite element method (which is either linear or energy-stable, but \emph{not} both) to approximate the solution of \eqref{equ:llb a} directly is later proposed and analysed in \cite{Soe25}; see also \cite{Soe25c} for the case below the Curie temperature. A conditionally stable, nonlinear scheme is also proposed in~\cite{BenEssAyo24}, where convergence towards weak solutions (without rate) is shown conditionally, but subject to a severe time-step constraint. On a related note, a conditionally stable, decoupled, linearised finite element scheme is proposed in~\cite{Soe26s} to solve the LLB equation coupled with nonlinear spin diffusion equation.

The aim of this paper is twofold. First, we propose fully discrete, conforming finite element schemes for the LLB equation \eqref{equ:llb a} that are both linear and provably stable in the energy norm by employing the scalar auxiliary variable (SAV) approach~\cite{SheXuYan18, SheXuYan19}. Second, we rigorously analyse the stability and convergence of these schemes, proving optimal-order error estimates under suitable regularity assumptions on the solution. Notably, the second scheme introduced in this paper employing BDF2 time discretisation attains second-order accuracy in time (assuming sufficiently regular exact solution), in contrast to existing schemes in the literature which are limited to first-order temporal accuracy. 

Recently, the SAV methodology has gained wide popularity for designing energy-stable schemes for various gradient flows and semilinear PDEs~\cite{CheMaoShe20, HouQia23}, including the Cahn--Hilliard equation~\cite{CheMaoShe20}, the Navier--Stokes equation~\cite{LiSheLiu21, ZhaYua22}, the phase field crystal equation~\cite{WanHuaWan21}, the Navier--Stokes--Cahn--Hilliard system~\cite{LiShe20, YanYiChe24}, and the magnetohydrodynamic system~\cite{LiWanShe22}. This list is by no means exhaustive.

It is worth noting that a common feature of the systems mentioned above is that they are \emph{semilinear}. In contrast, the LLB equation~\eqref{equ:llb a} is \emph{quasilinear} and is \emph{not} a gradient flow due to the term $-\gamma\bff{u}\times \bff{H}$, which complicates the analysis. Furthermore, the unknowns $\bff{u}$ and $\bff{H}$ are vector-valued, precluding the use of mean value theorems commonly employed in the analysis of SAV-based schemes for scalar-valued gradient flows. We overcome these inherent difficulties and derive optimal order convergence by treating the cross product term semi-implicitly, employing the Ritz projection, and carefully applying discrete Gagliardo--Nirenberg inequalities and Sobolev embeddings in the derivation of key superconvergence estimates. To the best of our knowledge, this is the first rigorous error analysis of a fully-discrete SAV-based scheme applied to a quasilinear system.

An important feature of \eqref{equ:llb a} is the dissipation of micromagnetic energy \eqref{equ:energy} over time, and it is essential to maintain this property at the discrete level.
As discussed in \cite{Soe25}, finite element schemes that are not provably energy-stable may produce non-physical solutions with non-decreasing energy. {The SAV approach proposed in this paper enables the construction of linear, unconditionally energy-stable schemes by reformulating the nonlinear energy into a quadratic form via an auxiliary variable. In addition, the quasilinear cross-product term is discretised in a semi-implicit, structure-preserving manner. Specifically, it is reformulated in terms of the effective field and treated in a weak form that is consistent with the energy functional. This allows one to exploit a discrete orthogonality property of the cross-product term, ensuring that it does not contribute to the energy variation at the discrete level. As a result, the scheme satisfies a discrete dissipation law for a modified energy functional, which serves as a surrogate Lyapunov functional and consistently approximates the original micromagnetic energy. This structure yields uniform stability in the energy norm (i.e., the $\bb{H}^1$-norm), enabling control of the gradient of the numerical solution. Such stability is generally not available for schemes for the LLB equation that lack a discrete energy law~\cite{BenEssAyo24, GolJiaLe25, LeSoeTra24, Soe25c} and plays a crucial role in the subsequent error analysis.}

This paper is organised as follows. Notations and preparatory results employed in the paper are collected in Section~\ref{sec:prelim}. Error analysis for the SAV FEM with linearised Euler and linearised BDF2 time discretisation are established in Section~\ref{sec:sav fem euler} and \ref{sec:sav fem cn}, respectively. The main results of this paper are Theorem~\ref{the:err euler} and Theorem~\ref{the:err cn}, where optimal-order error estimates in the $\bb{L}^2$ and $\bb{H}^1$ norms, as well as in the $\bb{L}^\infty$ norm (for $d\leq 2$), are proven. Numerical experiments are presented in Section~\ref{sec:exp} to verify the theoretical convergence rates, compare the performance of the schemes with existing methods, and investigate the energy dynamics, including the effect of adaptive time stepping.

\section{Preliminaries}\label{sec:prelim}

Some notations, SAV formulations of the problem, and essential details of the finite element approximation employed in this paper are outlined in the following subsections.

\subsection{Notations}
Some notations used throughout this paper are defined in this section. The function space $\bb{L}^p := \bb{L}^p(\mathscr{D}; \bb{R}^3)$ denotes the usual space of $p$-th integrable functions taking values in $\bb{R}^3$ and $\bb{W}^{s,p} := \bb{W}^{s,p}(\mathscr{D}; \bb{R}^3)$ denotes the usual Sobolev space of 
functions on $\mathscr{D} \subset \bb{R}^d$ taking values in $\bb{R}^3$. We
write $\bb{H}^s := \bb{W}^{s,2}$ and set $\bb{W}^{0,p}=\bb{L}^p$. Here, $\mathscr{D}\subset \bb{R}^d$ for $d=1,2,3$
is an open and bounded convex polytopal domain. The Neumann Laplacian operator acting on $\bb{R}^3$-valued functions is denoted by $\Delta$.

If $X$ is a Banach space, the spaces $L^p(0,T; X)$ and $W^{s,p}(0,T;X)$ denote respectively the usual Lebesgue and Sobolev spaces of functions on $(0,T)$ taking values in $X$. For simplicity, we write $W^{s,p}_T(\bb{W}^{m,r}) := W^{s,p}(0,T; \bb{W}^{m,r})$ and $L^p_T(\bb{L}^q) := L^p(0,T; \bb{L}^q)$. Throughout this paper, we denote the scalar product in a Hilbert space $H$ by $\inpro{\cdot}{\cdot}_H$ and its corresponding norm by $\|\cdot\|_H$. We will not distinguish between the scalar product of $\bb{L}^2$ vector-valued functions taking values in $\bb{R}^3$ and the scalar product of $\bb{L}^2$ matrix-valued functions taking values in $\bb{R}^{3\times 3}$, and denote them by $\langle\cdot,\cdot\rangle$.

Finally, the constant $C$ in the estimate denotes a generic constant which takes different values at different occurrences. If
the dependence of $C$ on some variable, e.g.~$T$, is highlighted, we will write
$C(T)$.

\subsection{SAV formulations}\label{sec:sav reform}

Without loss of generality for the analysis, we now set $\sigma = \mu = 1$ in~\eqref{equ:llb a} for simplicity.
Let $\mathcal{E}[\bff{u}]$ be the energy functional given by \eqref{equ:energy}.
Define
\begin{align}\label{equ:F u}
	\mathcal{F}[\bff{u}] := \int_\mathscr{D} \frac{\kappa}{4} \left(\abs{\bff{u}}^4 + 1\right) \,\dx.
\end{align}
Note that $\bff{H}= -\delta \mathcal{E}/\delta \bff{u}$ and that $\mathcal{F}[\bff{u}]\geq \frac14 \kappa \abs{\mathscr{D}}$. We also define the function $g$ by
\begin{align}\label{equ:g u}
	g(\bff{u}):= \kappa \abs{\bff{u}}^2 \bff{u}.
\end{align}
We introduce a scalar variable $r(t):= \sqrt{\mathcal{F}[\bff{u}]}$, so that $r(0)= \sqrt{\mathcal{F}[\bff{u}_0]}$. With these functions, we rewrite the LLB equation \eqref{equ:llb a} by employing the SAV approach as follows:
\begin{subequations}\label{equ:llb sav a}
	\begin{alignat}{2}
		\label{equ:llb sav eq1}
		&\partial_t \bff{u}
		=
		-
		\gamma \bff{u} \times \bff{H}
		+
		\alpha \bff{H}
		\,
		\qquad && \text{for $(t,\bff{x})\in(0,T)\times\mathscr{D}$,}
		\\
		\label{equ:llb sav eq2}
		&\bff{H}= \Delta \bff{u} - \kappa \bff{u} - \frac{r(t)}{\sqrt{\mathcal{F}[\bff{u}]}} g(\bff{u})
		\qquad && \text{for $(t,\bff{x})\in(0,T)\times\mathscr{D}$,}
		\\
		\label{equ:llb sav eq3}
		&\partial_t r= \frac{1}{2\sqrt{\mathcal{F}[\bff{u}]}} \inpro{g(\bff{u})}{\partial_t \bff{u}}
		\qquad && \text{for $t\in(0,T)$,}
		\\
		\label{equ:llb sav init}
		&\bff{u}(0,\bff{x})= \bff{u}_0(\bff{x}) 
		\qquad && \text{for } \bff{x}\in \mathscr{D},
		\\
		\label{equ:llb sav bound}
		&\displaystyle{
			\frac{\partial \bff{u}}{\partial \bff{n}}= \bff{0}}
		\qquad && \text{for } (t,\bff{x})\in (0,T) \times \partial \mathscr{D},
	\end{alignat}
\end{subequations}
At the continuous level, this system is equivalent to \eqref{equ:llb a} as $r(t)/\sqrt{\mathcal{F}[\bff{u}]} \equiv 1$ by definition.
Taking the inner product of \eqref{equ:llb sav eq1} and \eqref{equ:llb sav eq2} with $\bff{H}$ and $\partial_t \bff{u}$, respectively, we obtain
\begin{align}\label{equ:ene law sav form}
	\ddt \mathcal{E}[\bff{u}] \leq 0,	
\end{align}
which is the energy dissipation law obeyed by~\eqref{equ:llb a}.

\subsection{Finite element approximation}

Let $\mathscr{D}\subset \bb{R}^d$, $d=1,2, 3$, be a convex polytopal domain. Let $\{\mathcal{T}_h\}_{h>0}$ be a family of quasi-uniform triangulations of $\mathscr{D}$
into intervals (in 1D), triangles (in 2D), or tetrahedra (in 3D)
with maximal mesh-size $h$.
To discretise the LLB equation, we
introduce the conforming finite element space $\bb{V}_h \subset \bb{H}^1$ given by
\begin{equation}\label{equ:Vh}
	\bb{V}_h := \{\bff{\phi}_h \in \mathcal{C}(\overline{\mathscr{D}}; \bb{R}^3): \bff{\phi}|_K \in \mathcal{P}_1(K;\bb{R}^3), \; \forall K \in \mathcal{T}_h\},
\end{equation}
where $\mathcal{P}_1(K; \bb{R}^3)$ denotes the space of linear polynomials on $K$ taking values in $\bb{R}^3$.

As a consequence of the Bramble--Hilbert lemma, for $p\in [1,\infty]$, there exists a constant $C$ independent of $h$ such that for any $\bff{v} \in \bb{W}^{2,p}$, we have
\begin{align}\label{equ:fin approx}
	\inf_{\chi \in {\bb{V}}_h} \left\{ \norm{\bff{v} - \bff{\chi}}{\bb{L}^p} 
	+ 
	h \norm{\nabla (\bff{v}-\bff{\chi})}{\bb{L}^p} 
	\right\} 
	\leq 
	C h^2 \norm{\bff{v}}{\bb{W}^{2,p}}.
\end{align}
%Moreover, if the triangulation $\mathcal{T}_h$ is quasi-uniform, we have the following inverse estimate:
%\begin{align}\label{equ:inverse}
%	\norm{\bff{\chi}}{\bb{W}^{1,p}} \leq Ch^{-d \left(\frac12-\frac{1}{p}\right)} \norm{\bff{\chi}}{\bb{H}^1}, \quad \forall \bff{\chi}\in \bb{V}_h.
%\end{align}

In the analysis, we shall use several projection and interpolation operators. The existence of such operators and the properties that they possess will be described below (also see~\cite{BreSco08, CroTho87, DouDupWah74}).
Firstly, there exists an orthogonal projection operator $\Pi_h: \bb{L}^2 \to \bb{V}_h$ such that
\begin{align}\label{equ:orth proj}
	\inpro{\Pi_h \bff{v}-\bff{v}}{\bff{\chi}}=0,
	\quad
	\forall \bff{\chi}\in \bb{V}_h,
\end{align}
with the property that for any $\bff{v}\in \bb{W}^{2,p}$,
\begin{align}\label{equ:proj approx}
	\norm{\bff{v}- \Pi_h\bff{v}}{\bb{L}^p}
	+
	h \norm{\nabla( \bff{v}-\Pi_h\bff{v})}{\bb{L}^p}
	\leq
	Ch^2 \norm{\bff{v}}{\bb{W}^{2,p}}.
\end{align}
%Furthermore, there exists a nodal interpolation operator $\mathcal{I}_h:\bb{H}^1 \to \bb{V}_h$ that satisfies
%\begin{align}\label{equ:interp approx}
%	\norm{\bff{v}- \mathcal{I}_h \bff{v}}{\bb{L}^p}
%	+
%	h \norm{\nabla \left(\bff{v}-\mathcal{I}_h \bff{v}\right)}{\bb{L}^p}
%	\leq
%	Ch^{r+1} \norm{\bff{v}}{\bb{W}^{r+1,p}}.
%\end{align}

Next, we introduce the discrete Laplacian operator $\Delta_h: \bb{V}_h \to \bb{V}_h$ defined by
\begin{align}\label{equ:disc laplacian}
	\inpro{\Delta_h \bff{v}_h}{\bff{\chi}}
	=
	- \inpro{\nabla \bff{v}_h}{\nabla \bff{\chi}},
	\quad 
	\forall \bff{v}_h, \bff{\chi} \in \bb{V}_h,
\end{align}
as well as the Ritz projection $R_h: \bb{H}^1 \to \bb{V}_h$ defined by
\begin{align}\label{equ:Ritz}
	\inpro{\nabla R_h \bff{v}- \nabla \bff{v}}{\nabla \bff{\chi}}=0,
	\quad
	\forall \bff{\chi}\in \bb{V}_h.
\end{align}
For any $\bff{v}\in \bb{W}^{2,p}$, the approximation property for the Ritz projection holds \cite{BreSco08, LeyLi21, LinThoWah91, RanSco82}, namely:
\begin{align}\label{equ:Ritz ineq}
	\norm{\bff{v}-R_h\bff{v}}{\bb{W}^{s,p}}
	&\leq
	C h^{2-s} \norm{\bff{v}}{\bb{W}^{2, p}}, \quad s\in \{0,1\},\; p\in (1,\infty).
\end{align}
Furthermore, we have
\begin{align}
	\label{equ:Ritz infty ineq}
	\norm{\bff{v}-R_h\bff{v}}{\bb{L}^\infty}
	&\leq
	Ch^2 \abs{\ln h}^{\frac12} \norm{\bff{v}}{\bb{W}^{2,\infty}},
\end{align}
where $C$ is independent of $h$.

\subsection{Auxiliary results}

{
Auxiliary results relevant to our analysis are collected in this section.
Firstly, in the proof of the error estimates, the following vector identity is often used:
\begin{align}\label{equ:a dot ab}
	2\bff{a}\cdot (\bff{a}-\bff{b})
	=
	\abs{\bff{a}}^2 - \abs{\bff{b}}^2 + \abs{\bff{a}-\bff{b}}^2,\quad \forall \bff{a},\bff{b}\in \bb{R}^3.
\end{align}

Next, we state the Sobolev embedding theorem for $d\in \{1,2,3\}$ and the discrete Gronwall lemma~\cite{Bre11, Soe25}.

\begin{lemma}
	Let
		\begin{equation}\label{equ:p}
			p \in 
			\begin{cases}
				[1,\infty], &\text{ if $d=1$}, \\
				[1,\infty), &\text{ if $d=2$}, \\
				[1,6], &\text{ if $d=3$}.
			\end{cases}
		\end{equation}
		There exists a constant $C:=C(\mathscr{D},p)$ such that
		\begin{align}
			\label{equ:sob embed}
			\norm{\bff{v}}{\bb{L}^p}
			&\leq
			C \norm{\bff{v}}{\bb{H}^1}, \quad \forall \bff{v}\in \bb{H}^1,
			\\
			\label{equ:nab v less Delta v}
			\norm{\nabla \bff{v}}{\bb{L}^p}
			&\leq
			C\norm{\Delta \bff{v}}{\bb{L}^2}, \quad \forall \bff{v}\in \bb{H}^2_{\bff{n}},
		\end{align}
	where $\bb{H}^2_{\bff{n}}$ is the domain of the Neumann Laplacian.
\end{lemma}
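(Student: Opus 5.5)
The first inequality, \eqref{equ:sob embed}, is the standard Sobolev embedding $\bb{H}^1\hookrightarrow \bb{L}^p$ on a bounded Lipschitz domain; a convex polytope is Lipschitz. I would first note that everything can be done componentwise, since $\bb{R}^3$-valued functions are handled one component at a time. Then I use the Sobolev extension operator $E:H^1(\mathscr{D})\to H^1(\bb{R}^d)$ available on Lipschitz domains and invoke the embedding on $\bb{R}^d$ (equivalently the Gagliardo--Nirenberg--Sobolev inequality):
\begin{itemize}
\item for $d=3$, $H^1\hookrightarrow L^6$, and boundedness of $\mathscr{D}$ gives every $p\in[1,6]$ by H\"older's inequality;
\item for $d=2$, $H^1\hookrightarrow L^p$ for all finite $p$ (but not $L^\infty$);
\item for $d=1$, $H^1\hookrightarrow C(\overline{\mathscr{D}})$, which covers $p=\infty$.
\end{itemize}
The constant depends only on $\mathscr{D}$ and $p$.

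For \eqref{equ:nab v less Delta v}, I would apply the first estimate to $\nabla\bff{v}$, which gives $\norm{\nabla\bff{v}}{\bb{L}^p}\le C\norm{\nabla \bff{v}}{\bb{H}^1}\le C\norm{\bff{v}}{\bb{H}^2}$. The remaining task is to bound $\norm{\bff{v}}{\bb{H}^2}$, or rather the seminorm part $\norm{\nabla\bff{v}}{\bb{L}^2}+\norm{D^2\bff{v}}{\bb{L}^2}$, by $\norm{\Delta\bff{v}}{\bb{L}^2}$ for $\bff{v}\in\bb{H}^2_{\bff{n}}$.

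\begin{itemize}
\item \emph{Gradient term.} Integration by parts with $\partial_{\bff{n}}\bff{v}=0$ gives $\norm{\nabla\bff{v}}{\bb{L}^2}^2=-\inpro{\Delta\bff{v}}{\bff{v}}$. Since $\nabla\bff{v}$ and $\Delta\bff{v}$ are unchanged if $\bff{v}$ is replaced by $\bff{v}-\bar{\bff{v}}$ ($\bar{\bff v}$ the mean), I may assume $\bff{v}$ has zero mean. The Poincar\'e--Wirtinger inequality then yields $\norm{\nabla\bff{v}}{\bb{L}^2}\le C\norm{\Delta\bff{v}}{\bb{L}^2}$.
\item \emph{Second derivatives.} Here convexity is essential: on a convex domain the Neumann problem enjoys $H^2$ regularity. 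Concretely, Grisvard's inequality gives $\norm{D^2 w}{L^2}\le\norm{\Delta w}{L^2}$ for $w\in H^2$ with $\partial_{\bff n}w=0$ on a convex domain. Alternatively, one can use the Kadlec/Grisvard regularity estimate $\norm{w}{H^2}\le C(\norm{\Delta w}{L^2}+\norm{w}{L^2})$ combined with the zero-mean reduction above.
\end{itemize}

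Since the left-hand side of \eqref{equ:nab v less Delta v} only involves $\nabla\bff{v}$, the mean is irrelevant, so the $\bb{H}^1$ norm of $\nabla\bff{v}$ is controlled by $\norm{\nabla\bff v}{\bb L^2}+\norm{D^2\bff v}{\bb L^2}\le C\norm{\Delta\bff{v}}{\bb{L}^2}$.

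The main obstacle is the $H^2$ regularity step on a non-smooth (polytopal) domain. Smooth-boundary elliptic theory does not apply directly, so convexity must be used via Grisvard's result, or via approximation of $\mathscr{D}$ by smooth convex domains on which the boundary curvature term has a favourable sign. I would cite this as a known result rather than reprove it.
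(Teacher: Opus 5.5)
Your proposal is correct and is essentially the argument the paper relies on. The paper gives no proof of its own and simply cites these as standard facts: the Sobolev embedding on a Lipschitz domain and the $\bb{H}^2$-regularity of the Neumann Laplacian on a convex domain. Your route (applying the embedding to $\nabla\bff{v}$, bounding $\norm{\nabla\bff{v}}{\bb{L}^2}$ by integration by parts and Poincar\'e--Wirtinger after removing the mean, and bounding the Hessian via Grisvard's estimate or Kadlec's regularity on convex domains) is exactly how those cited results combine to give the lemma.
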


\begin{lemma}[Discrete Gronwall's lemma]\label{lem:disc gron}
	Let $k, B, a_j, b_j$, and $\gamma_j$ be non-negative numbers (for all integers $j\geq 0$) such that
	\begin{align*}
		a_n+k \sum_{j=0}^n b_j \leq B + k \sum_{j=0}^{n-1} \gamma_j a_j.
	\end{align*}
	Then
	\begin{align*}
		a_n+k \sum_{j=0}^n b_j \leq B \exp\left({k} \sum_{j=0}^{n-1} \gamma_j\right).
	\end{align*}
\end{lemma}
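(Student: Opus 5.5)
We prove the discrete Gronwall lemma by induction on $n$, comparing with the sequence defined by the right-hand side. First we dispose of the trivial case $B=0$ separately, or replace $B$ by $B+\varepsilon$ and let $\varepsilon\to 0^+$ at the end, so that we may assume $B>0$. Set
\begin{align*}
	S_n := B + k\sum_{j=0}^{n-1}\gamma_j a_j, \qquad S_0 = B .
\end{align*}
The hypothesis reads $a_n + k\sum_{j=0}^n b_j \le S_n$. Since the $b_j$ are non-negative, this gives in particular $a_n \le S_n$ for every $n$.

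The key step is the multiplicative recursion for $S_n$. We have $S_{n+1} = S_n + k\gamma_n a_n \le S_n + k\gamma_n S_n = (1+k\gamma_n)S_n$. Using $1+x\le e^x$ for $x\ge 0$, this becomes $S_{n+1}\le e^{k\gamma_n}S_n$. Iterating from $S_0=B$ yields
\begin{align*}
	S_n \le B\exp\Big(k\sum_{j=0}^{n-1}\gamma_j\Big).
\end{align*}
Combining this with $a_n + k\sum_{j=0}^n b_j \le S_n$ gives the claim. If the $\varepsilon$-perturbation was used, we then let $\varepsilon\to 0^+$.

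The argument is elementary, and the main point of care is the indexing. The right-hand side sum runs only up to $n-1$, which is what makes the scheme explicit. No smallness condition on $k\gamma_n$ is required, in contrast to the implicit version in which the sum runs up to $n$. One should also check that the $b_j$ terms are simply dropped when bounding $a_n$ by $S_n$. They play no role in the recursion and reappear only in the final combination. No earlier results of the paper are needed beyond non-negativity of the data.
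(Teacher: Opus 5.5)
Your proof is correct. Setting $S_n := B + k\sum_{j=0}^{n-1}\gamma_j a_j$, the hypothesis gives $a_m \le S_m$. Hence $S_{m+1} \le (1+k\gamma_m)S_m \le e^{k\gamma_m}S_m$, and iterating from $S_0 = B$ gives the claim. The paper gives no proof of this lemma and only cites \cite{Bre11, Soe25}; your argument is the standard one behind such citations.

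Two small remarks:
\begin{itemize}
\item The $\varepsilon$-perturbation to force $B>0$ is unnecessary. You never divide by $B$ or by $S_n$, and the step $(1+k\gamma_n)S_n \le e^{k\gamma_n}S_n$ needs only $S_n \ge 0$.
\item State explicitly that the hypothesis is assumed for every index $m \in \{0,1,\ldots,n\}$, not just for the single $n$ in the conclusion, since you use $a_m \le S_m$ for all $m<n$. This is the intended reading of the statement. Under the single-index reading the conclusion fails: take $k=\gamma_0=B=1$, $b_j=0$, $a_0=100$, $a_1=101$.
\end{itemize}
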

}

Finally, in two-dimensional polygonal domains, the discrete Sobolev inequality holds~\cite{Bre04}: there exists a constant $C$ independent of $h$ such that for all $v_h\in \bb{V}_h$,
\begin{align}\label{equ:disc sob 2d}
	\norm{\bff{v}_h}{\bb{L}^\infty} \leq C \abs{\ln h}^{\frac12} \norm{\bff{v}_h}{\bb{H}^1}.
\end{align}
Finally, for any $\bff{v}_h\in \bb{V}_h$, the following discrete Gagliardo--Nirenberg inequality holds:
\begin{align}
	\label{equ:disc lapl L infty}
	\norm{\bff{v}_h}{\bb{L}^\infty}
	&\leq
	C \norm{\bff{v}_h}{\bb{L}^2}^{1-\frac{d}{4}} \left(\norm{\bff{v}_h}{\bb{L}^2}^\frac{d}{4} + \norm{\Delta_h \bff{v}_h}{\bb{L}^2}^\frac{d}{4} \right).
\end{align}
Inequality~\eqref{equ:disc lapl L infty} is shown in \cite[Appendix A]{GuiLiWan22}).

%\subsection{Auxiliary results}
%
%We need some auxiliary results in our analysis. 
%
%\begin{lemma}
%Let $\epsilon>0$ be given and $d\in \{1,2,3\}$. Then there exists a positive constant $C$ such that the following inequalities hold.
%\begin{enumerate}
%	\renewcommand{\labelenumi}{\theenumi}
%	\renewcommand{\theenumi}{{\rm (\roman{enumi})}}	
%	\item For any $\bff{v}\in \bb{H}^1$,
%	\begin{align}\label{equ:gal nir uh L4}
	%		\norm{\bff{v}}{\bb{L}^4}
	%		&\leq
	%		C \norm{\bff{v}}{\bb{H}^1}^{\frac{d}{4}}
	%		\norm{\bff{v}}{\bb{L}^2}^{1-\frac{d}{4}}
	%		\leq
	%		C \norm{\bff{v}}{\bb{L}^2}^2
	%		+
	%		\epsilon \norm{\nabla \bff{v}}{\bb{L}^2}^2.
	%	\end{align}
%	\item Let $\mathscr{D}$ be a convex polygonal or polyhedral domain with globally quasi-uniform triangulation. For any $\bff{v}_h\in \bb{V}_h$,
%	\begin{align}
	%		\label{equ:disc lapl L infty}
	%		\norm{\bff{v}_h}{\bb{L}^\infty}
	%		&\leq
	%		C \norm{\bff{v}_h}{\bb{L}^2}^{1-\frac{d}{4}} \left(\norm{\bff{v}_h}{\bb{L}^2}^\frac{d}{4} + \norm{\Delta_h \bff{v}_h}{\bb{L}^2}^\frac{d}{4} \right).
	%	\end{align}
%\end{enumerate}
%\end{lemma}
%
%\begin{proof}
%Estimate \eqref{equ:gal nir uh L4} follows from the Gagliardo--Nirenberg and the Young inequalities. Inequality \eqref{equ:disc lapl L infty} is shown in \cite[Appendix A]{GuiLiWan22}).
%\end{proof}

\section{SAV-FEM with first-order semi-implicit time discretisation}\label{sec:sav fem euler}

Let $\bff{u}_h^n$ be the approximation in $\bb{V}_h$ of $\bff{u}(t_n)$, where $t_n:=nk \in [0,T]$ for $n=0,1,\ldots,N$ and $N= \lfloor T/k \rfloor$. For any function $\bff{v}$, we write $\bff{v}^n:= \bff{v}(t_n)$, and define for any $n\in \bb{N}$,
\begin{align*}
	\dtt \bff{v}^n := \frac1k (\bff{v}^n-\bff{v}^{n-1}).
\end{align*}

A linear fully-discrete SAV FEM with first-order semi-implicit time discretisation can be described as follows.
We start with $\bff{u}_h^0= R_h \bff{u}_0 \in \bb{V}_h$ and $r_h^0= \mathcal{F}(\bff{u}_h^0) \in \bb{R}$. Given $(\bff{u}_h^{n-1}, r_h^{n-1})\in \bb{V}_h \times \bb{R}$, we find $(\bff{u}_h^n, r_h^n)\in \bb{V}_h \times \bb{R}$ satisfying
\begin{subequations}\label{equ:fem euler}
	\begin{alignat}{2}
		\label{equ:fem euler u}
		\inpro{\dtt \bff{u}_h^n}{\bff{\phi}} 
		&=
		-\gamma \inpro{\bff{u}_h^{n-1}\times \bff{H}_h^n}{\bff{\phi}}
		+\alpha \inpro{\bff{H}_h^n}{\bff{\phi}},
		\quad &&\forall \bff{\phi} \in \bb{V}_h,
		\\
		\label{equ:fem euler r}
		\dtt r_h^n
		&=
		\frac{1}{2\sqrt{\mathcal{F}[\bff{u}_h^{n-1}]}} \inpro{g(\bff{u}_h^{n-1})}{\dtt \bff{u}_h^n},
	\end{alignat}
\end{subequations}
where
\begin{align}\label{equ:fem be Hhn}
	\bff{H}_h^n
	&=
	\Delta_h \bff{u}_h^n
	- \kappa \bff{u}_h^n
	- \Pi_h \left[ \frac{r_h^n}{\sqrt{\mathcal{F}[\bff{u}_h^{n-1}]}} g(\bff{u}_h^{n-1}) \right].
\end{align}
Here, $\Delta_h$ and $\Pi_h$ denote the discrete Laplacian and the $\bb{L}^2$ projector, respectively, and the functions $\mathcal{F}$ and $g$ are defined in \eqref{equ:F u} and \eqref{equ:g u}.
The above scheme can be effectively implemented in the following algorithm:

\begin{algorithm}[Semi-implicit linear Euler--SAV FEM]\label{alg:euler}
Let $h>0$ and $k>0$ be given.
\\
\textbf{Input}: Given $\bff{u}_h^0\in \bb{V}_h$ and $r_h^0=\mathcal{F}[\bff{u}_h^0]\in \bb{R}$.
\\
\textbf{For} $n=1$ to $N$, where $N=\lfloor T/k \rfloor$, \textbf{do}:
\begin{enumerate}
	\item Substitute equation \eqref{equ:fem be Hhn} into \eqref{equ:fem euler u}, and compute $\bff{u}_h^n\in \bb{V}_h$;
	\item Compute $r_h^n$ using \eqref{equ:fem euler r}.
\end{enumerate}
\textbf{Output}: a sequence of discrete functions $\{\bff{u}_h^n\}_{1\leq n\leq N}$ and real numbers $\{r_h^n\}_{1\leq n\leq N}$.
\end{algorithm}

Let us define the modified energy $\widetilde{\mathcal{E}}$ of the finite element solution $\bff{u}_h^n$ as
\begin{align}\label{equ:ener modified}
	\widetilde{\mathcal{E}}[\bff{u}_h^n] := \frac12 \norm{\nabla \bff{u}_h^n}{\bb{L}^2}^2 + \frac{\kappa}{2} \norm{\bff{u}_h^n}{\bb{L}^2}^2 + \abs{r_h^n}^2.
\end{align}
Note that $\abs{r_h^n}^2$ is an approximation to $\mathcal{F}[u^n]$, and thus the modified energy $\widetilde{\mathcal{E}}$ approximates the actual energy $\mathcal{E}$.
To derive optimal-order convergence, throughout this section we assume the following regularity for the exact solution $\bff{u}$:
\begin{align}\label{equ:reg u euler}
	\bff{u} \in L^\infty_T(\bb{W}^{2,\infty}) \cap W^{1,\infty}_T(\bb{H}^1) \cap W^{2,\infty}_T(\bb{L}^2).
\end{align}

We first show that the numerical scheme is well-posed.

\begin{proposition}\label{pro:well euler}
	Let $\bff{u}_h^{n-1}\in \bb{V}_h$ and $r_h^{n-1}\in \bb{R}$ be given. For sufficiently small $k>0$, there exists a unique $\bff{u}_h^n\in \bb{V}_h$ and $r_h^n\in \bb{R}$ solving the system \eqref{equ:fem euler}--\eqref{equ:fem be Hhn}.
\end{proposition}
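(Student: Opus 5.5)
Since the scheme \eqref{equ:fem euler}--\eqref{equ:fem be Hhn} is linear in the unknowns $(\bff{u}_h^n, r_h^n)$ and $\bb{V}_h\times\bb{R}$ is finite-dimensional, existence and uniqueness are equivalent. It therefore suffices to show that the homogeneous problem has only the trivial solution. The first step is to eliminate $r_h^n$. By \eqref{equ:fem euler r},
\begin{equation*}
r_h^n = r_h^{n-1} + \frac{1}{2\sqrt{\mathcal{F}[\bff{u}_h^{n-1}]}}\inpro{g(\bff{u}_h^{n-1})}{\bff{u}_h^n-\bff{u}_h^{n-1}},
\end{equation*}
which is well defined because $\mathcal{F}\geq \kappa\abs{\mathscr{D}}/4>0$. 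Substituting this into \eqref{equ:fem be Hhn} and then into \eqref{equ:fem euler u} leaves a square linear system for $\bff{u}_h^n$ alone. Write $\bff{b}:=\Pi_h g(\bff{u}_h^{n-1})/\sqrt{\mathcal{F}[\bff{u}_h^{n-1}]}\in\bb{V}_h$. The homogeneous version asks for $\bff{w}\in\bb{V}_h$ with
\begin{equation*}
\frac1k\inpro{\bff{w}}{\bff{\phi}} = -\gamma\inpro{\bff{u}_h^{n-1}\times \bff{G}}{\bff{\phi}}+\alpha\inpro{\bff{G}}{\bff{\phi}}, \qquad \bff{G}:=\Delta_h\bff{w}-\kappa\bff{w}-\tfrac12\inpro{\bff{b}}{\bff{w}}\,\bff{b},
\end{equation*}
for all $\bff{\phi}\in\bb{V}_h$. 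Here I have used that $\inpro{g(\bff{u}_h^{n-1})}{\bff{w}}=\inpro{\Pi_h g(\bff{u}_h^{n-1})}{\bff{w}}$ for $\bff{w}\in\bb{V}_h$. The goal is to show that $\bff{w}=\bff{0}$.

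The key step is the energy-type test $\bff{\phi}=-\bff{G}\in\bb{V}_h$. With this choice the cross-product term vanishes pointwise, since $(\bff{u}_h^{n-1}\times\bff{G})\cdot\bff{G}=0$. The right-hand side is then $-\alpha\norm{\bff{G}}{\bb{L}^2}^2$. For the left-hand side, the definition of $\Delta_h$ in \eqref{equ:disc laplacian} gives
\begin{equation*}
-\frac1k\inpro{\bff{w}}{\bff{G}}=\frac1k\Big(\norm{\nabla\bff{w}}{\bb{L}^2}^2+\kappa\norm{\bff{w}}{\bb{L}^2}^2+\tfrac12\abs{\inpro{\bff{b}}{\bff{w}}}^2\Big)\ge0.
\end{equation*}
Hence every term is nonnegative and sums to zero, so $\kappa\norm{\bff{w}}{\bb{L}^2}^2=0$ and therefore $\bff{w}=\bff{0}$. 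This argument gives unconditional unique solvability, so the restriction to small $k$ in the statement is not even needed.

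Once $\bff{u}_h^n$ is determined, $r_h^n$ follows uniquely from the explicit formula above. The only point requiring care, and the natural obstacle, is choosing the test function $\bff{G}$ rather than $\bff{w}$. Testing with $\bff{w}$ would leave the non-sign-definite term $\gamma\inpro{\bff{u}_h^{n-1}\times\bff{G}}{\bff{w}}$, and controlling it would require a smallness condition on $k$ depending on $\norm{\bff{u}_h^{n-1}}{\bb{L}^\infty}$ and inverse estimates. That fallback would explain the hypothesis "sufficiently small $k$", but the test with $\bff{G}$ avoids it.
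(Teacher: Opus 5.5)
Your proof is correct and is essentially the paper's argument: both eliminate $r_h^n$ via \eqref{equ:fem euler r}, reduce to a square linear system for $\bff{u}_h^n$, and get injectivity from the skew-symmetry $\inpro{\bff{u}_h^{n-1}\times\bff{v}}{\bff{v}}=0$. The paper does this by inverting $T_h=\alpha I-\gamma S_h$ (with $S_h\bff{v}:=\Pi_h(\bff{u}_h^{n-1}\times\bff{v})$) and showing that $-k\Delta_h+k\kappa I+T_h^{-1}$ plus the rank-one SAV term is positive definite, whereas you test the homogeneous problem directly with $-\bff{G}$, which is the same identity written in the variable $\bff{G}=\frac1k T_h^{-1}\bff{w}$; as you note, neither argument actually uses smallness of $k$.
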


\begin{proof}
For each $h>0$, define a linear operator $S_h:\bb{V}_h\to \bb{V}_h$ by $S_h \bff{v}:= \Pi_h \left(\bff{u}_h^{n-1} \times \bff{v}\right)$. Note that $S_h$ is skew-symmetric with respect to the $\bb{L}^2$-inner product, in the sense that $\inpro{S_h \bff{v}}{\bff{v}}=0$ for all $\bff{v}\in \bb{V}_h$. Consequently, for any $\alpha>0$, the linear operator $T_h:=\alpha I- \gamma S_h$ is positive definite, and thus is invertible on $\bb{V}_h$. We denote its inverse by $T_h^{-1}:= (\alpha I- \gamma S_h)^{-1}$, which is also positive definite. Moreover, $\left\|T_h^{-1}\right\| \leq 1/\alpha$.

Now, equation \eqref{equ:fem euler u} can be written as $\bff{H}_h^n= \frac{1}{k} T_h^{-1} \left(\bff{u}_h^n-\bff{u}_h^{n-1}\right)$, which by \eqref{equ:fem be Hhn} implies
\begin{align*}
	T_h^{-1} \left(\bff{u}_h^n-\bff{u}_h^{n-1}\right) 
	&=
	k\Delta_h \bff{u}_h^n- k\kappa \bff{u}_h^n
	-
	\frac{k r_h^n}{\sqrt{\mathcal{F}[\bff{u}_h^{n-1}]}} \Pi_h\left[g(\bff{u}_h^{n-1})\right].
\end{align*}
Let $\mathcal{A}_h:=-k\Delta_h +k\kappa I+ T_h^{-1}$.
Using \eqref{equ:fem euler r} and rearranging the above equation, we see that \eqref{equ:fem euler}--\eqref{equ:fem be Hhn} is equivalent to finding $\bff{u}_h^n \in \bb{V}_h$ which satisfies
\begin{align}\label{equ:lin sys}
	\mathcal{A}_h \bff{u}_h^n + \frac{k}{2\sqrt{\mathcal{F}[\bff{u}_h^{n-1}]}} \inpro{g(\bff{u}_h^{n-1})}{\bff{u}_h^n} \Pi_h\left[g(\bff{u}_h^{n-1})\right]
	=
	\bff{f}_h^{n-1},
\end{align}
where $\bff{f}_h^{n-1}$ contains only data from the previous step, namely
\begin{align*}
	\bff{f}_h^{n-1}:= 
	T_h^{-1} \bff{u}_h^{n-1} 
	-
	\frac{k r_h^{n-1}}{\sqrt{\mathcal{F}[\bff{u}_h^{n-1}]}} \Pi_h\left[g(\bff{u}_h^{n-1})\right]
	+
	\frac{k}{2\sqrt{\mathcal{F}[\bff{u}_h^{n-1}]}}
	\inpro{g(\bff{u}_h^{n-1})}{\bff{u}_h^{n-1}}
	\Pi_h\left[g(\bff{u}_h^{n-1})\right].
\end{align*}
Since $\bb{V}_h$ is finite-dimensional, it suffices to show that the linear system \eqref{equ:lin sys} has at most one solution; uniqueness then implies existence. Writing \eqref{equ:lin sys} as $\mathcal{B}_h \bff{u}_h^n=\bff{f}_h^{n-1}$, we note that the operator $\mathcal{B}_h$ is positive definite on $\bb{V}_h$. The desired uniqueness then follows immediately, completing the proof of the proposition.
\end{proof}

Next, we show the unconditional stability property of scheme~\eqref{equ:fem euler} by establishing a modified energy law satisfied by the discrete solution.

\begin{proposition}
	Let $\bff{u}_h^n$ be given by Algorithm~\ref{alg:euler}. For $n=1,2,\ldots,\lfloor T/k \rfloor$,
	\begin{align}\label{equ:disc ener ineq}
		\widetilde{\mathcal{E}}[\bff{u}_h^n] \leq \widetilde{\mathcal{E}}[\bff{u}_h^{n-1}],
	\end{align}
	where $\widetilde{\mathcal{E}}[\bff{u}_h^n]$ was defined in \eqref{equ:ener modified}.
\end{proposition}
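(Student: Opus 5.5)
The plan is the standard SAV energy argument: test \eqref{equ:fem euler u} with $\bff{\phi}=\bff{H}_h^n$, test the definition \eqref{equ:fem be Hhn} of $\bff{H}_h^n$ with $\dtt\bff{u}_h^n$, and multiply \eqref{equ:fem euler r} by $2r_h^n$.

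Since $\bff{H}_h^n\in\bb{V}_h$ is an admissible test function, the choice $\bff{\phi}=\bff{H}_h^n$ in \eqref{equ:fem euler u} is legitimate. The cross-product term then vanishes pointwise, because $(\bff{u}_h^{n-1}\times\bff{H}_h^n)\cdot\bff{H}_h^n=0$. This leaves
\[
\inpro{\dtt\bff{u}_h^n}{\bff{H}_h^n}=\alpha\norm{\bff{H}_h^n}{\bb{L}^2}^2\ge 0 .
\]
This is the key structural point: treating the precession term with $\bff{u}_h^{n-1}$ while keeping $\bff{H}_h^n$ implicit preserves orthogonality at the discrete level.

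Next, take the $\bb{L}^2$ inner product of \eqref{equ:fem be Hhn} with $\dtt\bff{u}_h^n\in\bb{V}_h$. We handle the three terms as follows.
\begin{itemize}
\item By \eqref{equ:disc laplacian}, $\inpro{\Delta_h\bff{u}_h^n}{\dtt\bff{u}_h^n}=-\inpro{\nabla\bff{u}_h^n}{\nabla\dtt\bff{u}_h^n}$.
\item The $\kappa$-term gives $-\kappa\inpro{\bff{u}_h^n}{\dtt\bff{u}_h^n}$.
\item By \eqref{equ:orth proj}, the projection $\Pi_h$ can be dropped, because the test function lies in $\bb{V}_h$. The nonlinear term therefore becomes $-\frac{r_h^n}{\sqrt{\mathcal{F}[\bff{u}_h^{n-1}]}}\inpro{g(\bff{u}_h^{n-1})}{\dtt\bff{u}_h^n}$.
\end{itemize}
By \eqref{equ:fem euler r}, this nonlinear term equals $-2r_h^n\,\dtt r_h^n$. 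Here the SAV variable's explicit weight $\sqrt{\mathcal{F}[\bff{u}_h^{n-1}]}$ matches exactly between \eqref{equ:fem be Hhn} and \eqref{equ:fem euler r}, so the two expressions cancel consistently.

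We then apply the identity \eqref{equ:a dot ab} (and its scalar version) to the quadratic terms, with $\bff{a}=\bff{u}_h^n$ (respectively $\nabla\bff{u}_h^n$, $r_h^n$) and $\bff{b}$ the value at step $n-1$. Multiplying by $k$ gives
\[
k\inpro{\bff{H}_h^n}{\dtt\bff{u}_h^n}
=-\bigl(\widetilde{\mathcal{E}}[\bff{u}_h^n]-\widetilde{\mathcal{E}}[\bff{u}_h^{n-1}]\bigr)
-\tfrac12\norm{\nabla(\bff{u}_h^n-\bff{u}_h^{n-1})}{\bb{L}^2}^2
-\tfrac{\kappa}{2}\norm{\bff{u}_h^n-\bff{u}_h^{n-1}}{\bb{L}^2}^2
-\abs{r_h^n-r_h^{n-1}}^2 .
\]
Combining this with the first identity yields
\[
\widetilde{\mathcal{E}}[\bff{u}_h^n]-\widetilde{\mathcal{E}}[\bff{u}_h^{n-1}]+k\alpha\norm{\bff{H}_h^n}{\bb{L}^2}^2+(\text{nonnegative numerical dissipation})=0,
\]
which gives \eqref{equ:disc ener ineq}. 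Note that $\inpro{\dtt\bff{u}_h^n}{\bff{H}_h^n}$ is the same quantity in both identities, by symmetry of the real inner product.

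There is no genuine obstacle here. The only points needing care are that $\bff{H}_h^n$ and $\dtt\bff{u}_h^n$ lie in $\bb{V}_h$, which justifies both removing $\Pi_h$ and the cross-product cancellation. One must also note that $\mathcal{F}[\bff{u}_h^{n-1}]\ge\kappa\abs{\mathscr{D}}/4>0$, so the scheme is well defined, with existence given by Proposition~\ref{pro:well euler}.
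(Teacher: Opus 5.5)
Your proposal is correct and follows essentially the same route as the paper: test \eqref{equ:fem euler u} with $\bff{H}_h^n$, test \eqref{equ:fem be Hhn} with $\dtt\bff{u}_h^n$, and use \eqref{equ:fem euler r} multiplied by $2r_h^n$ (left implicit in the paper's Euler proof) together with \eqref{equ:a dot ab}. This yields exactly the energy identity \eqref{equ:ener est equal}, and you additionally spell out the two justifications the paper leaves tacit: the pointwise vanishing of the cross-product term, and the removal of $\Pi_h$ against test functions in $\bb{V}_h$.
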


\begin{proof}
	Setting $\bff{\phi}=\bff{H}_h^n$ in \eqref{equ:fem euler} and taking the inner product of \eqref{equ:fem be Hhn} with $\dtt \bff{u}_h^n$, then combining the resulting equations, we obtain
	\begin{align}\label{equ:ener est equal}
		&\frac1k \left(\widetilde{\mathcal{E}}[\bff{u}_h^n]- \widetilde{\mathcal{E}}[\bff{u}_h^{n-1}]\right)
		+
		\alpha \norm{\bff{H}_h^n}{\bb{L}^2}^2
		\nonumber\\
		&\quad
		+
		\frac1k \left(\frac12 \norm{\nabla \bff{u}_h^n-\nabla \bff{u}_h^{n-1}}{\bb{L}^2}^2 + \frac{\kappa}{2} \norm{\bff{u}_h^n-\bff{u}_h^{n-1}}{\bb{L}^2}^2 
		+ \abs{r_h^n-r_h^{n-1}}^2 \right) = 0.
	\end{align}
	This implies \eqref{equ:disc ener ineq}, completing the proof of the proposition.
\end{proof}

With the energy stability property, we can show the following uniform boundedness of the finite element solutions.

\begin{lemma}
Let $\bff{u}_h^n$ be given by Algorithm~\ref{alg:euler}. For $n=1,2,\ldots,\lfloor T/k \rfloor$, we have
\begin{align}\label{equ:uhn H1 stab}
	\norm{\bff{u}_h^n}{\bb{H}^1}^2 + \abs{r_h^n}^2 
	+ k \sum_{j=1}^n \norm{\bff{H}_h^j}{\bb{L}^2}^2 \leq C_\mathrm{S}.
\end{align}
Consequently, we have
\begin{align}\label{equ:C infty euler}
	k \sum_{j=1}^n \norm{\bff{u}_h^j}{\bb{L}^\infty}^2 \leq C_\infty,
\end{align}
where $C_\mathrm{S}$ and $C_\infty$ are constants depending on $T$, but are independent of $n$, $h$, and $k$.
\end{lemma}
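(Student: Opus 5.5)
The plan is to sum the discrete energy identity \eqref{equ:ener est equal} from $j=1$ to $n$ and telescope, dropping the nonnegative numerical dissipation terms. This gives
\begin{align*}
	\widetilde{\mathcal{E}}[\bff{u}_h^n] + \alpha k\sum_{j=1}^n \norm{\bff{H}_h^j}{\bb{L}^2}^2 \leq \widetilde{\mathcal{E}}[\bff{u}_h^0].
\end{align*}
Since $\kappa>0$, the left side controls $\frac12\norm{\nabla\bff{u}_h^n}{\bb{L}^2}^2+\frac{\kappa}{2}\norm{\bff{u}_h^n}{\bb{L}^2}^2+\abs{r_h^n}^2 \geq c\norm{\bff{u}_h^n}{\bb{H}^1}^2 + \abs{r_h^n}^2$.

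It remains to bound $\widetilde{\mathcal{E}}[\bff{u}_h^0]$ independently of $h$. With $\bff{u}_h^0=R_h\bff{u}_0$, the estimate \eqref{equ:Ritz ineq} with $s=1$, $p=2$ gives $\norm{R_h\bff{u}_0}{\bb{H}^1}\leq C\norm{\bff{u}_0}{\bb{H}^2}$. The term $r_h^0$ is defined via $\mathcal{F}$ (or its square root, depending on the convention used in the scheme). In either case it is bounded by $C(1+\norm{\bff{u}_h^0}{\bb{L}^4}^4)$, which by the Sobolev embedding \eqref{equ:sob embed} with $p=4$ (valid for $d\le3$) is controlled by $\norm{\bff{u}_h^0}{\bb{H}^1}$. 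The regularity \eqref{equ:reg u euler} ensures $\bff{u}_0\in\bb{W}^{2,\infty}$. This yields \eqref{equ:uhn H1 stab} with $C_\mathrm{S}$ depending only on the data; the dependence on $T$ is trivial here.

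For \eqref{equ:C infty euler}, I would use the discrete Gagliardo--Nirenberg inequality \eqref{equ:disc lapl L infty}:
\begin{align*}
	\norm{\bff{u}_h^j}{\bb{L}^\infty}^2 \leq C\norm{\bff{u}_h^j}{\bb{L}^2}^2 + C\norm{\bff{u}_h^j}{\bb{L}^2}^{2-\frac d2}\norm{\Delta_h\bff{u}_h^j}{\bb{L}^2}^{\frac d2} \leq C + C\norm{\Delta_h\bff{u}_h^j}{\bb{L}^2}^2,
\end{align*}
using the uniform $\bb{L}^2$ bound and Young's inequality, since $d/2\leq 2$. It therefore suffices to bound $k\sum_j\norm{\Delta_h\bff{u}_h^j}{\bb{L}^2}^2$. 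From \eqref{equ:fem be Hhn},
\begin{align*}
	\Delta_h\bff{u}_h^j=\bff{H}_h^j+\kappa\bff{u}_h^j+\frac{r_h^j}{\sqrt{\mathcal{F}[\bff{u}_h^{j-1}]}}\Pi_h g(\bff{u}_h^{j-1}).
\end{align*}
The first two terms are controlled in $\ell^2(\bb{L}^2)$ by the energy bound. For the third, I would use $\sqrt{\mathcal{F}}\geq \frac12\sqrt{\kappa\abs{\mathscr{D}}}$, $\abs{r_h^j}\leq C$, and $\bb{L}^2$-stability of $\Pi_h$. This gives the bound $C\norm{\bff{u}_h^{j-1}}{\bb{L}^6}^3\leq C\norm{\bff{u}_h^{j-1}}{\bb{H}^1}^3\leq C$ by \eqref{equ:sob embed} with $p=6$, valid for $d\le 3$. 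Summing over $j\le n$ with $nk\le T$ gives $k\sum_j\norm{\Delta_h\bff{u}_h^j}{\bb{L}^2}^2\leq C(T)$, hence \eqref{equ:C infty euler}.

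I expect the main obstacle to be the nonlinear term in $\Delta_h\bff{u}_h^j$. It is handled only because the $\bb{H}^1$ bound is uniform and the cubic term needs exactly $\bb{L}^6$, which is the critical embedding in 3D, together with the positive lower bound on $\mathcal{F}$ preventing the SAV ratio from blowing up. A minor point is fixing the convention for $r_h^0$ so that $\abs{r_h^0}$ is bounded in terms of $\norm{\bff{u}_h^0}{\bb{H}^1}$.
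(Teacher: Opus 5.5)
Your proposal is correct and follows the paper's proof. You telescope the discrete energy identity \eqref{equ:ener est equal} to get \eqref{equ:uhn H1 stab}, then bound $k\sum_j\norm{\Delta_h\bff{u}_h^j}{\bb{L}^2}^2$ by rearranging \eqref{equ:fem be Hhn} (using $\bb{L}^2$-stability of $\Pi_h$, the lower bound on $\mathcal{F}$, and the $\bb{L}^6$ control of $g(\bff{u}_h^{j-1})$) and combine this with the discrete Gagliardo--Nirenberg inequality \eqref{equ:disc lapl L infty}; along the way you also make explicit the $h$-independent bound on $\widetilde{\mathcal{E}}[\bff{u}_h^0]$ and the Young's inequality step, which the paper leaves implicit.
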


\begin{proof}
By summing \eqref{equ:ener est equal} over $j\in \{1,2,\ldots, n\}$, we have
\begin{align*}
	\widetilde{\mathcal{E}}[\bff{u}_h^n]+ \alpha k\sum_{j=1}^n \norm{\bff{H}_h^j}{\bb{L}^2}^2 \leq \widetilde{\mathcal{E}}[\bff{u}_h^0],
\end{align*}
which implies \eqref{equ:uhn H1 stab}.
By rearranging the terms, noting the stability of $\Pi_h$, we obtain
	\begin{align*}
		k\sum_{j=1}^n \norm{\Delta_h \bff{u}_h^j}{\bb{L}^2}^2
		&\leq
		Ck\sum_{j=1}^n \norm{\bff{H}_h^j}{\bb{L}^2}^2
		+
		Ck \sum_{j=1}^n \norm{\bff{u}_h^j}{\bb{L}^2}^2
		+
		Ck \sum_{j=1}^n \abs{r_h^j}^2 \norm{g(\bff{u}_h^{j-1})}{\bb{L}^2}^2
		\\
		&\leq
		Ck\sum_{j=1}^n \norm{\bff{H}_h^j}{\bb{L}^2}^2
		+
		CTC_\mathrm{S}
		+
		Ck\sum_{j=1}^n \abs{r_h^j}^2 \norm{\bff{u}_h^j}{\bb{L}^6}^6
		\\
		&\leq
		Ck\sum_{j=1}^n \norm{\bff{H}_h^j}{\bb{L}^2}^2
		+
		CTC_\mathrm{S}
		+
		CTC_{\mathrm{S}}^4 \leq C,
	\end{align*}
	where $C_\mathrm{S}$ is the constant in \eqref{equ:uhn H1 stab}. Inequality \eqref{equ:C infty euler} then follows from \eqref{equ:disc lapl L infty}, noting \eqref{equ:uhn H1 stab} again.
\end{proof}

To facilitate the error analysis of the scheme, we decompose the approximation error by writing
\begin{align}
	\label{equ:split un}
	\bff{u}_h^n-\bff{u}^n &= (\bff{u}_h^n -R_h \bff{u}^n) + (R_h \bff{u}^n- \bff{u}^n) =: \bff{\theta}^n + \bff{\rho}^n,
	\\
	\label{equ:split Hn}
	\bff{H}_h^n-\bff{H}^n &= (\bff{H}_h^n -R_h \bff{H}^n) + (R_h \bff{H}^n- \bff{H}^n) =: \bff{\xi}^n + \bff{\eta}^n,
	\\
	\label{equ:split rn}
	r_h^n-r^n &= e^n,
\end{align}
where $R_h$ is the Ritz projection defined by \eqref{equ:Ritz}. As such,
\begin{align}\label{equ:Ritz zero}
	\inpro{\nabla \bff{\rho}^n}{\nabla \bff{\phi}} = \inpro{\nabla \bff{\eta}^n}{\nabla \bff{\phi}} = 0, \quad \forall \bff{\phi}\in \bb{V}_h.
\end{align}
We begin with some preliminary results. Firstly, note that for $p\in (1,\infty)$,
\begin{align}\label{equ:dt vn Lp}
	\norm{\dtt \bff{v}^n}{\bb{L}^p}
	&=
	\norm{\frac{1}{k} \int_{t_{n-1}}^{t_n} \partial_t \bff{v}(t) \,\dt}{\bb{L}^p}
	\leq
	\norm{\bff{v}}{W^{1,\infty}_T(\bb{L}^p)},
	\\
	\label{equ:dt vn min par v}
	\norm{\dtt \bff{v}^n- \partial_t \bff{v}^n}{\bb{L}^p}
	&=
	\norm{\frac{1}{2k} \int_{t_{n-1}}^{t_n} (t-t_{n-1}) \partial_t^2 \bff{v}(t)\,\dt}{\bb{L}^p}
	\leq 
	C_p k \norm{\bff{v}}{W^{2,\infty}_T(\bb{L}^p)}.
\end{align}

The following lemmas will be needed in the error analysis.

\begin{lemma}\label{lem:nonlinear euler}
	Let $\bff{u}_h^n$ be given by Algorithm~\ref{alg:euler}, and let $\bff{\theta}^n$ and $\bff{\rho}^n$ be given by \eqref{equ:split un}. Let the functions $\mathcal{F}$ and $g$ be given by \eqref{equ:F u} and \eqref{equ:g u}, respectively. Then
	\begin{align}
		\label{equ:g uhn F}
		\norm{\frac{g(\bff{u}_h^{n-1})}{\sqrt{\mathcal{F}[\bff{u}_h^{n-1}]}} - \frac{g(\bff{u}^{n-1})}{\sqrt{\mathcal{F}[\bff{u}^{n-1}]}}}{\bb{L}^2}
		&\leq
		C \norm{\bff{\theta}^{n-1}+ \bff{\rho}^{n-1}}{\bb{L}^6},
		\\[1ex]
		\label{equ:g un F}
		\norm{\frac{g(\bff{u}^n)}{\sqrt{\mathcal{F}[\bff{u}^n]}} - \frac{ g(\bff{u}^{n-1})}{\sqrt{\mathcal{F}[\bff{u}^{n-1}]}}}{\bb{L}^2}
		&\leq
		Ck,
	\end{align}
	where $C$ is a constant {depending on $T$ and the exact solution $\bff{u}$}, but is independent of $n$, $h$, and $k$.
\end{lemma}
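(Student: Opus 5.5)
We split each difference into a numerator part and a denominator part, and use the uniform $\bb{H}^1$ bound \eqref{equ:uhn H1 stab}, the regularity \eqref{equ:reg u euler}, and the lower bound $\mathcal{F}[\bff{v}]\geq \frac14\kappa\abs{\mathscr{D}}$ throughout. Writing $\bff{a}=\bff{u}_h^{n-1}$ and $\bff{b}=\bff{u}^{n-1}$, we have $\bff{a}-\bff{b}=\bff{\theta}^{n-1}+\bff{\rho}^{n-1}$ and
\begin{align*}
	\frac{g(\bff{a})}{\sqrt{\mathcal{F}[\bff{a}]}}-\frac{g(\bff{b})}{\sqrt{\mathcal{F}[\bff{b}]}}
	=
	\frac{g(\bff{a})-g(\bff{b})}{\sqrt{\mathcal{F}[\bff{a}]}}
	+
	g(\bff{b})\,\frac{\sqrt{\mathcal{F}[\bff{b}]}-\sqrt{\mathcal{F}[\bff{a}]}}{\sqrt{\mathcal{F}[\bff{a}]}\sqrt{\mathcal{F}[\bff{b}]}}.
\end{align*}
Since both square roots are bounded below by $\frac12\sqrt{\kappa\abs{\mathscr{D}}}$, the denominators are harmless. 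For the first term we use the pointwise bound $\abs{g(\bff{a})-g(\bff{b})}\leq C(\abs{\bff{a}}^2+\abs{\bff{b}}^2)\abs{\bff{a}-\bff{b}}$, which holds because $\abs{\bff{a}}^2\bff{a}-\abs{\bff{b}}^2\bff{b}=\abs{\bff{a}}^2(\bff{a}-\bff{b})+(\abs{\bff{a}}^2-\abs{\bff{b}}^2)\bff{b}$. Hölder with exponents $(3/2\cdot$ in $\bb{L}^6$ form$)$, i.e.\ $\bb{L}^2\leq \bb{L}^6\cdot\bb{L}^6\cdot\bb{L}^6$, gives
\[
\norm{g(\bff{a})-g(\bff{b})}{\bb{L}^2}\leq C\left(\norm{\bff{a}}{\bb{L}^6}^2+\norm{\bff{b}}{\bb{L}^6}^2\right)\norm{\bff{a}-\bff{b}}{\bb{L}^6}.
\]
By the Sobolev embedding \eqref{equ:sob embed} ($d\leq 3$), $\norm{\bff{a}}{\bb{L}^6}\leq C\norm{\bff{u}_h^{n-1}}{\bb{H}^1}\leq C$ by \eqref{equ:uhn H1 stab}, and $\norm{\bff{b}}{\bb{L}^6}\leq C$ by the regularity of $\bff{u}$. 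This term is therefore bounded by $C\norm{\bff{\theta}^{n-1}+\bff{\rho}^{n-1}}{\bb{L}^6}$.

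For the second term, $\norm{g(\bff{b})}{\bb{L}^2}\leq C\norm{\bff{b}}{\bb{L}^6}^3\leq C$. We also have
\[
\abs{\sqrt{\mathcal{F}[\bff{b}]}-\sqrt{\mathcal{F}[\bff{a}]}}=\frac{\abs{\mathcal{F}[\bff{b}]-\mathcal{F}[\bff{a}]}}{\sqrt{\mathcal{F}[\bff{a}]}+\sqrt{\mathcal{F}[\bff{b}]}}\leq C\abs{\mathcal{F}[\bff{a}]-\mathcal{F}[\bff{b}]}.
\]
Using $\abs{\abs{\bff{a}}^4-\abs{\bff{b}}^4}\leq C(\abs{\bff{a}}^3+\abs{\bff{b}}^3)\abs{\bff{a}-\bff{b}}$ and Hölder with $\bb{L}^2\cdot\bb{L}^2$, where the cubes lie in $\bb{L}^2$ because $\bb{H}^1\subset\bb{L}^6$, we get $\abs{\mathcal{F}[\bff{a}]-\mathcal{F}[\bff{b}]}\leq C\norm{\bff{a}-\bff{b}}{\bb{L}^2}\leq C\norm{\bff{a}-\bff{b}}{\bb{L}^6}$ on the bounded domain. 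This proves \eqref{equ:g uhn F}.

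For \eqref{equ:g un F} we apply the same decomposition with $\bff{a}=\bff{u}^n$ and $\bff{b}=\bff{u}^{n-1}$, both uniformly bounded in $\bb{H}^1$, indeed in $\bb{L}^\infty$. The argument above gives the bound $C\norm{\bff{u}^n-\bff{u}^{n-1}}{\bb{L}^6}$. By \eqref{equ:dt vn Lp} with $p=6$ and the embedding $\bb{H}^1\subset\bb{L}^6$,
\[
\norm{\bff{u}^n-\bff{u}^{n-1}}{\bb{L}^6}\leq k\norm{\bff{u}}{W^{1,\infty}_T(\bb{L}^6)}\leq Ck\norm{\bff{u}}{W^{1,\infty}_T(\bb{H}^1)},
\]
and this norm is finite by \eqref{equ:reg u euler}.

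The one step that needs care is that the discrete solution is only controlled in $\bb{H}^1$, not in $\bb{L}^\infty$. This is why every estimate is arranged to use the $\bb{L}^6$ norms of $\bff{u}_h^{n-1}$ through \eqref{equ:uhn H1 stab} and the embedding for $d\leq3$. It is also why the right-hand side of \eqref{equ:g uhn F} is stated in $\bb{L}^6$ rather than $\bb{L}^2$. The resulting constants depend only on $C_\mathrm{S}$, $\kappa$, $\abs{\mathscr{D}}$ and the norms of $\bff{u}$ in \eqref{equ:reg u euler}.
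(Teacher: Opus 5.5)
Your proof is correct and follows essentially the same route as the paper. Like the paper, you split the difference into a numerator term $g(\bff{u}_h^{n-1})-g(\bff{u}^{n-1})$ and a denominator term controlled by $\abs{\mathcal{F}[\bff{u}^{n-1}]-\mathcal{F}[\bff{u}_h^{n-1}]}$ via the lower bound on $\mathcal{F}$, bound both with H\"older in $\bb{L}^6$, the stability estimate \eqref{equ:uhn H1 stab} and $\bb{H}^1\hookrightarrow\bb{L}^6$, and then repeat the argument with \eqref{equ:dt vn Lp} for the second estimate. The only cosmetic difference is that you estimate the $\mathcal{F}$-difference by pairing the cubes in $\bb{L}^2$ against $\norm{\bff{a}-\bff{b}}{\bb{L}^2}$, whereas the paper uses $\bb{L}^4$ norms; both give the stated bound.
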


\begin{proof}
	We first prove \eqref{equ:g uhn F}. Note that the left-hand side of \eqref{equ:g uhn F} can be written as
	\begin{align}\label{equ:est gf}
		&\norm{\frac{\big(g(\bff{u}_h^{n-1})-g(\bff{u}^{n-1})\big)\sqrt{\F{\bff{u}^{n-1}}}}{\sqrt{\F{\bff{u}_h^{n-1}}} \cdot \sqrt{\F{\bff{u}^{n-1}}}} + \frac{g(\bff{u}^{n-1}) \big(\F{\bff{u}^{n-1}}- \F{\bff{u}_h^{n-1}}\big)}{\sqrt{\F{\bff{u}_h^{n-1}}} \cdot \sqrt{\F{\bff{u}^{n-1}}} \cdot \left(\sqrt{\F{\bff{u}^{n-1}}}+ \sqrt{\F{\bff{u}_h^{n-1}}}\right) }}{\bb{L}^2}
		\nonumber\\
		&\qquad\leq
		C\norm{g(\bff{u}_h^{n-1})-g(\bff{u}^{n-1})}{\bb{L}^2} + C\abs{\F{\bff{u}^{n-1}}- \F{\bff{u}_h^{n-1}}}.
	\end{align}
	By writing
	\begin{align*}
		g(\bff{u}_h^{n-1})-g(\bff{u}^{n-1})
		=
		\left( (\bff{\theta}^{n-1}+\bff{\rho}^{n-1})\cdot (\bff{u}_h^{n-1}+\bff{u}^{n-1}) \right) \bff{u}_h^{n-1} + \abs{\bff{u}^{n-1}}^2 \left(\bff{\theta}^{n-1}+\bff{\rho}^{n-1}\right),
	\end{align*}
	we obtain by H\"older's inequality, \eqref{equ:uhn H1 stab}, and the Sobolev embedding $\bb{H}^1\hookrightarrow \bb{L}^6$,
	\begin{align}\label{equ:guhn int}
		\norm{g(\bff{u}_h^{n-1})-g(\bff{u}^{n-1})}{\bb{L}^2}
		&\leq
		C\left(\norm{\bff{u}_h^{n-1}}{\bb{L}^6}^2 + \norm{\bff{u}^{n-1}}{\bb{L}^6}^2 \right) \norm{\bff{\theta}^{n-1}+ \bff{\rho}^{n-1}}{\bb{L}^6}
		\nonumber\\
		&\leq
		C\norm{\bff{\theta}^{n-1}+ \bff{\rho}^{n-1}}{\bb{L}^6}.
	\end{align}
	Similarly, by writing
	\begin{align*}
		\F{\bff{u}^{n-1}}- \F{\bff{u}_h^{n-1}}
		=
		\int_{\mathscr{D}} \left(\abs{\bff{u}^{n-1}}^2+ \abs{\bff{u}_h^{n-1}}^2\right)\left((\bff{\theta}^{n-1}+\bff{\rho}^{n-1})\cdot (\bff{u}^{n-1}+\bff{u}_h^{n-1})\right) \dx,
	\end{align*}
	we have by H\"older's inequality, \eqref{equ:uhn H1 stab}, and the regularity of $\bff{u}$ that
	\begin{align}\label{equ:F un1 inter}
		&\abs{\F{\bff{u}^{n-1}}- \F{\bff{u}_h^{n-1}}}
		\nonumber\\
		&\leq
		C\left(\norm{\bff{u}_h^{n-1}}{\bb{L}^4}^2 + \norm{\bff{u}^{n-1}}{\bb{L}^4}^2 \right) \left(\norm{\bff{u}^{n-1}}{\bb{L}^4}+\norm{\bff{u}_h^{n-1}}{\bb{L}^4}\right) \norm{\bff{\theta}^{n-1}+ \bff{\rho}^{n-1}}{\bb{L}^4}
		\nonumber\\
		&\leq
		C \norm{\bff{\theta}^{n-1}+ \bff{\rho}^{n-1}}{\bb{L}^4}.
	\end{align}
	Substituting these estimates into \eqref{equ:est gf}, we infer \eqref{equ:g uhn F}.
	
	Next, we show \eqref{equ:g un F}. By applying the same argument, we obtain
	\begin{align*}
		\norm{\frac{g(\bff{u}^n)}{\sqrt{\mathcal{F}[\bff{u}^n]}} - \frac{g(\bff{u}^{n-1})}{\sqrt{\mathcal{F}[\bff{u}^{n-1}]}}}{\bb{L}^2}
		&\leq
		C \norm{\bff{u}^n-\bff{u}^{n-1}}{\bb{L}^6}
		\leq
		Ck \norm{\bff{u}}{W^{1,\infty}_T(\bb{H}^1)},
	\end{align*}
	where in the last step we used \eqref{equ:dt vn Lp}. Noting the regularity of $\bff{u}$, we deduce \eqref{equ:g un F}, thus completing the proof of the lemma.
\end{proof}

We derive a bound for $\norm{\dtt \bff{\theta}^n}{\bb{L}^2}$ in the following lemma.

\begin{lemma}
	Let $\bff{u}_h^n$ be given by Algorithm~\ref{alg:euler}. Let $\bff{\theta}^n$ and $\bff{\rho}^n$ be given by \eqref{equ:split un}, and let $\bff{\xi}^n$ and $\bff{\eta}^n$ be given by \eqref{equ:split Hn}. Then
	\begin{align}\label{equ:dt theta L2}
		\norm{\dtt \bff{\theta}^n}{\bb{L}^2}
		&\leq
		C\left(1+\norm{\bff{u}_h^{n-1}}{\bb{L}^\infty}\right) \norm{\bff{\xi}^n}{\bb{L}^2}
		+
		C\norm{\bff{\theta}^{n-1}}{\bb{L}^2}
		+
		C\left(1+\norm{\bff{u}_h^{n-1}}{\bb{L}^\infty}\right) h^2 + Ck,
		%\\
		%\label{equ:dt theta L32}
		%\norm{\dtt \bff{\theta}^n}{\bb{L}^{3/2}}
		%&\leq
		%Ck + Ch^2 + C \norm{\bff{\xi}^n}{\bb{L}^2}
		%+
		%C\norm{\bff{\theta}^{n-1}}{\bb{L}^2}.
	\end{align}
	Consequently, we also have the bound
	\begin{align}\label{equ:dt uh L2}
		\norm{\dtt \bff{u}_h^n}{\bb{L}^2}
		&\leq
		C\left(1+\norm{\bff{u}_h^{n-1}}{\bb{L}^\infty}\right) \norm{\bff{\xi}^n}{\bb{L}^2}
		+
		C\norm{\bff{\theta}^{n-1}}{\bb{L}^2}
		+
		C\left(1+\norm{\bff{u}_h^{n-1}}{\bb{L}^\infty}\right) h^2 
		\nonumber\\
		&\quad
		+ Ck
		+ \norm{u}{W^{1,\infty}_T(\bb{L}^2)},
	\end{align}
	where $C$ is a constant {depending on $T$ and the exact solution $\bff{u}$}, but is independent of $n$, $h$, and $k$.
\end{lemma}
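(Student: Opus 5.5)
We derive an error equation for $\bff{\theta}^n$ from the scheme and the exact equation, and then test it with $\bff{\phi}=\dtt\bff{\theta}^n$.

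\textbf{Step 1: the error equation.} The exact solution satisfies \eqref{equ:llb sav eq1} at $t_n$. We write it against a test function $\bff{\phi}\in\bb{V}_h$ and subtract it from \eqref{equ:fem euler u}. Using $\dtt\bff{u}_h^n-\partial_t\bff{u}^n=\dtt\bff{\theta}^n+\dtt\bff{\rho}^n+(\dtt\bff{u}^n-\partial_t\bff{u}^n)$, we obtain for all $\bff{\phi}\in\bb{V}_h$
\begin{align*}
\inpro{\dtt\bff{\theta}^n}{\bff{\phi}}
&= -\inpro{\dtt\bff{\rho}^n}{\bff{\phi}}-\inpro{\dtt\bff{u}^n-\partial_t\bff{u}^n}{\bff{\phi}}
+\alpha\inpro{\bff{\xi}^n+\bff{\eta}^n}{\bff{\phi}}
\\
&\quad -\gamma\inpro{\bff{u}_h^{n-1}\times(\bff{\xi}^n+\bff{\eta}^n)}{\bff{\phi}}
-\gamma\inpro{(\bff{\theta}^{n-1}+\bff{\rho}^{n-1})\times\bff{H}^n}{\bff{\phi}}
-\gamma\inpro{(\bff{u}^{n-1}-\bff{u}^n)\times\bff{H}^n}{\bff{\phi}}.
\end{align*}
Here the cross-product term has been split as
\[
\bff{u}_h^{n-1}\times\bff{H}_h^n-\bff{u}^n\times\bff{H}^n=\bff{u}_h^{n-1}\times(\bff{H}_h^n-\bff{H}^n)+(\bff{u}_h^{n-1}-\bff{u}^{n-1})\times\bff{H}^n+(\bff{u}^{n-1}-\bff{u}^n)\times\bff{H}^n .
\]

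\textbf{Step 2: estimating each term.} Take $\bff{\phi}=\dtt\bff{\theta}^n$ and divide by $\norm{\dtt\bff{\theta}^n}{\bb{L}^2}$ (Cauchy--Schwarz).
\begin{itemize}
\item By \eqref{equ:Ritz ineq} applied to $\dtt\bff{u}^n$, we have $\norm{\dtt\bff{\rho}^n}{\bb{L}^2}\le Ch^2\norm{\dtt\bff{u}^n}{\bb{H}^2}$. This needs $\partial_t\bff{u}\in L^\infty_T(\bb{H}^2)$, which follows from the equation and the regularity assumption \eqref{equ:reg u euler}, or from a slightly stronger assumption. It gives a contribution $Ch^2$.
\item By \eqref{equ:dt vn min par v}, $\norm{\dtt\bff{u}^n-\partial_t\bff{u}^n}{\bb{L}^2}\le Ck$.
\item The terms $\alpha\norm{\bff{\xi}^n}{\bb{L}^2}$ and $\alpha\norm{\bff{\eta}^n}{\bb{L}^2}\le Ch^2\norm{\bff{H}^n}{\bb{H}^2}$ are direct.
\item For the cross term, $\norm{\bff{u}_h^{n-1}\times(\bff{\xi}^n+\bff{\eta}^n)}{\bb{L}^2}\le\norm{\bff{u}_h^{n-1}}{\bb{L}^\infty}\left(\norm{\bff{\xi}^n}{\bb{L}^2}+Ch^2\right)$. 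This is the source of the factor $(1+\norm{\bff{u}_h^{n-1}}{\bb{L}^\infty})$ in \eqref{equ:dt theta L2}.
\item Since $\bff{H}^n\in\bb{L}^\infty$ by \eqref{equ:reg u euler}, we get $\norm{(\bff{\theta}^{n-1}+\bff{\rho}^{n-1})\times\bff{H}^n}{\bb{L}^2}\le C\norm{\bff{\theta}^{n-1}}{\bb{L}^2}+Ch^2$.
\item Finally, $\norm{\bff{u}^n-\bff{u}^{n-1}}{\bb{L}^2}\le Ck$, which yields $Ck$ for the last term.
\end{itemize}
Collecting these bounds gives \eqref{equ:dt theta L2}.

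\textbf{Step 3: the bound \eqref{equ:dt uh L2}.} Write $\dtt\bff{u}_h^n=\dtt\bff{\theta}^n+\dtt R_h\bff{u}^n$. Then use
\[
\norm{\dtt R_h\bff{u}^n}{\bb{L}^2}\le\norm{\dtt\bff{u}^n}{\bb{L}^2}+\norm{\dtt\bff{\rho}^n}{\bb{L}^2}\le\norm{\bff{u}}{W^{1,\infty}_T(\bb{L}^2)}+Ch^2,
\]
where the first piece comes from \eqref{equ:dt vn Lp}. The extra $Ch^2$ is absorbed into the $h^2$ term.

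\textbf{Main obstacle.} The delicate point is the regularity needed for $\bff{\eta}^n$ and $\dtt\bff{\rho}^n$, namely $\bff{H}\in L^\infty_T(\bb{H}^2)$ and $\partial_t\bff{u}\in L^\infty_T(\bb{H}^2)$. Since $\bff{H}=\Delta\bff{u}-\dots$, these go beyond the literal content of \eqref{equ:reg u euler}. I would invoke them from the equation together with the regularity of the exact solution, which is consistent with the phrase ``depending on the exact solution'' in the statement. Otherwise, the estimate reduces to routine applications of Hölder's inequality.
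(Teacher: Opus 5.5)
Your proposal is correct and is essentially the paper's argument. Testing the error equation with $\dtt\bff{\theta}^n$ and applying Cauchy--Schwarz is equivalent to the paper's representation of $\dtt\bff{\theta}^n$ through $\Pi_h$ combined with the $\bb{L}^2$-stability of $\Pi_h$. Your term-by-term bounds and the final triangle-inequality step also match the paper's.

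The regularity caveat you raise is genuine, and the paper relies on it too, since it also bounds $\dtt\bff{\rho}^n$ and $\bff{\eta}^n$ by $Ch^2$ via \eqref{equ:Ritz ineq}. However, it does not follow from \eqref{equ:reg u euler} through the equation, so a genuinely stronger assumption is needed. Given $\bff{u}\in L^\infty_T(\bb{W}^{2,\infty})$, the condition $\partial_t\bff{u}\in L^\infty_T(\bb{H}^2)$ is equivalent to $\bff{H}\in L^\infty_T(\bb{H}^2)$, i.e.\ $\Delta\bff{u}\in L^\infty_T(\bb{H}^2)$, which \eqref{equ:reg u euler} does not provide.
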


\begin{proof}
	Subtracting the weak formulation of \eqref{equ:llb sav eq1} from the corresponding equation in scheme \eqref{equ:fem euler}, we obtain by noting \eqref{equ:split un},
	\begin{align}
		\label{equ:uhn min un}
		&\inpro{\dtt \bff{\theta}^n + \dtt \bff{\rho}^n + \dtt \bff{u}^n - \partial_t \bff{u}^n}{\bff{\phi}}
		\nonumber\\
		&=
		-\gamma \inpro{\bff{u}_h^{n-1} \times (\bff{\xi}^n+\bff{\eta}^n)}{\bff{\phi}}
		-
		\gamma \inpro{(\bff{\theta}^{n-1}+\bff{\rho}^{n-1})\times \bff{H}^n}{\bff{\phi}}
		\nonumber\\
		&\quad
		-
		\gamma \inpro{(\bff{u}^{n-1}-\bff{u}^n)\times \bff{H}^n}{\bff{\phi}}
		+
		\alpha \inpro{\bff{\xi}^n+\bff{\eta}^n}{\bff{\phi}}, \quad \forall \bff{\phi}\in \bb{V}_h.
	\end{align} 
	Therefore, with $\Pi_h$ defined by \eqref{equ:orth proj}, we can write
	\begin{align*}
		\dtt \bff{\theta}^n
		&=
		\Pi_h \big[\partial_t \bff{u}^n- \dtt \bff{u}^n\big] 
		-
		\Pi_h \dtt \bff{\rho}^n
		-
		\gamma \Pi_h \big[\bff{u}_h^{n-1}\times (\bff{\xi}^n+\bff{\eta}^n)\big]
		\\
		&\quad
		-
		\gamma \Pi_h\big[(\bff{\theta}^{n-1}+\bff{\rho}^{n-1})\times \bff{H}^n\big] 
		-
		\gamma \Pi_h \big[(\bff{u}^{n-1}-\bff{u}^n)\times \bff{H}^n\big] 
		+
		\alpha \bff{\xi}^n
		+
		\alpha \Pi_h \bff{\eta}^n.
	\end{align*}
	Using the $\bb{L}^2$ stability of $\Pi_h$, we then have by H\"older's inequality,
	\begin{align*}
		\norm{\dtt \bff{\theta}^n}{\bb{L}^2}
		&\leq
		C\norm{\partial_t \bff{u}^n- \dtt \bff{u}^n}{\bb{L}^2}
		+
		C\norm{\dtt \bff{\rho}^n}{\bb{L}^2}
		+
		C\norm{\bff{u}_h^{n-1}}{\bb{L}^\infty} \norm{\bff{\xi}^n+\bff{\eta}^n}{\bb{L}^2}
		\\
		&\quad
		+
		C\norm{\bff{\theta}^{n-1}+\bff{\rho}^{n-1}}{\bb{L}^2} \norm{\bff{H}^n}{\bb{L}^\infty}
		+
		C\norm{\bff{u}^{n-1}-\bff{u}^n}{\bb{L}^2} \norm{\bff{H}^n}{\bb{L}^\infty}
		+
		C\norm{\bff{\xi}^n}{\bb{L}^2}
		+
		C\norm{\bff{\eta}^n}{\bb{L}^2}
		\\
		&\leq
		Ck + Ch^2 + C \norm{\bff{u}_h^{n-1}}{\bb{L}^\infty} \norm{\bff{\xi}^n}{\bb{L}^2}
		+
		Ch^2 \norm{\bff{u}_h^{n-1}}{\bb{L}^\infty}
		+
		C\norm{\bff{\theta}^{n-1}}{\bb{L}^2}
		+
		C\norm{\bff{\xi}^n}{\bb{L}^2},
	\end{align*}
	where in the last step we used \eqref{equ:dt vn Lp}, \eqref{equ:dt vn min par v}, \eqref{equ:Ritz ineq}, and the regularity of the solution assumed in \eqref{equ:reg u euler}. This implies \eqref{equ:dt theta L2}.
	
%	Similarly, we also have by H\"older's inequality and the stability estimate \eqref{equ:uhn H1 stab},
%	\begin{align*}
%		\norm{\dtt \bff{\theta}^n}{\bb{L}^{3/2}}
%		&\leq
%		C\norm{\partial_t \bff{u}^n- \dtt \bff{u}^n}{\bb{L}^{3/2}}
%		+
%		C\norm{\dtt \bff{\rho}^n}{\bb{L}^{3/2}}
%		+
%		C\norm{\bff{u}_h^{n-1}}{\bb{L}^6} \norm{\bff{\xi}^n+\bff{\eta}^n}{\bb{L}^2}
%		\\
%		&\quad
%		+
%		C\norm{\bff{\theta}^{n-1}+\bff{\rho}^{n-1}}{\bb{L}^2} \norm{\bff{H}^n}{\bb{L}^6}
%		+
%		C\norm{\bff{u}^{n-1}-\bff{u}^n}{\bb{L}^2} \norm{\bff{H}^n}{\bb{L}^6}
%		+
%		C\norm{\bff{\xi}^n}{\bb{L}^{3/2}}
%		+
%		C\norm{\bff{\eta}^n}{\bb{L}^{3/2}}
%		\\
%		&\leq
%		Ck + Ch^2 + C \norm{\bff{\xi}^n}{\bb{L}^2}
%		+
%		C\norm{\bff{\theta}^{n-1}}{\bb{L}^2}.
%	\end{align*}
	
	Finally, note that $\dtt \bff{u}_h^n= \dtt \bff{\theta}^n+ \dtt \bff{\rho}^n + \dtt \bff{u}^n$. Therefore, by the triangle inequality, \eqref{equ:dt theta L2}, \eqref{equ:Ritz ineq}, and \eqref{equ:dt vn Lp}, we deduce \eqref{equ:dt uh L2}.
\end{proof}

We can now prove the following key superconvergence estimate.

\begin{proposition}
	Let $\bff{\theta}^n, \bff{\xi}^n$, and $e^n$ be defined by \eqref{equ:split un}, \eqref{equ:split Hn}, and \eqref{equ:split rn}, respectively. For sufficiently small $h,k>0$, we have 
	\begin{align}\label{equ:error theta n eul}
		\norm{\bff{\theta}^n}{\bb{H}^1}^2 + \abs{e^n}^2 + \sum_{j=1}^n \norm{\bff{\xi}^j}{\bb{L}^2}^2 \leq C(h^4+k^2).
	\end{align}
	Furthermore, if $\mathscr{D}\subset \bb{R}^2$, then
	\begin{align}\label{equ:error theta Linfty eul}
		\norm{\bff{\theta}^n}{\bb{L}^\infty}^2
		\leq
		C(h^4+k^2)\abs{\ln h}.
	\end{align}
	Here, $C$ is a constant {depending on $T$ and the exact solution $\bff{u}$}, but is independent of $n$, $h$, and $k$.
\end{proposition}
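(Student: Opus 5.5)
The plan is an energy argument for the error triple $(\bff{\theta}^n,\bff{\xi}^n,e^n)$ that mimics the proof of the discrete energy law \eqref{equ:ener est equal}. First I write the error equation for $\bff{H}$: subtract the weak form of \eqref{equ:llb sav eq2} from \eqref{equ:fem be Hhn}. Using \eqref{equ:Ritz zero}, this gives for all $\bff{\phi}\in\bb{V}_h$
\begin{align*}
\inpro{\bff{\xi}^n+\bff{\eta}^n}{\bff{\phi}}
=
-\inpro{\nabla\bff{\theta}^n}{\nabla\bff{\phi}}
-\kappa\inpro{\bff{\theta}^n+\bff{\rho}^n}{\bff{\phi}}
-\inpro{\frac{r_h^n g(\bff{u}_h^{n-1})}{\sqrt{\F{\bff{u}_h^{n-1}}}}-\frac{r^n g(\bff{u}^n)}{\sqrt{\F{\bff{u}^n}}}}{\bff{\phi}} .
\end{align*}
I also need the error equation for $r$, obtained by subtracting \eqref{equ:llb sav eq3} at $t_n$ from \eqref{equ:fem euler r}.

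The key steps, in order, are as follows.
\begin{enumerate}
\item Test \eqref{equ:uhn min un} with $\bff{\phi}=\bff{\xi}^n$. The cross term $\inpro{\bff{u}_h^{n-1}\times\bff{\xi}^n}{\bff{\xi}^n}$ vanishes identically, and this is exactly the structure that motivated the semi-implicit treatment.
\item Test the $\bff{H}$-error equation with $\bff{\phi}=\dtt\bff{\theta}^n$, and multiply the $r$-error equation by $2e^n$.
\item Add the three identities. The SAV coupling terms cancel up to remainders; these remainders are controlled by Lemma~\ref{lem:nonlinear euler}, \eqref{equ:dt theta L2} and \eqref{equ:dt uh L2}. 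Using \eqref{equ:a dot ab} on the left, this should yield
\begin{align*}
\frac{1}{k}\bigl(\mathcal{Y}^n-\mathcal{Y}^{n-1}\bigr)+\frac{\alpha}{2}\norm{\bff{\xi}^n}{\bb{L}^2}^2
\le
C\bigl(1+\norm{\bff{u}_h^{n-1}}{\bb{L}^\infty}^2\bigr)\bigl(\mathcal{Y}^{n-1}+h^4+k^2\bigr),
\end{align*}
where $\mathcal{Y}^n:=\tfrac12\norm{\nabla\bff{\theta}^n}{\bb{L}^2}^2+\tfrac{\kappa}{2}\norm{\bff{\theta}^n}{\bb{L}^2}^2+\abs{e^n}^2$.
\end{enumerate}

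Among the remainders, the term $\gamma\inpro{(\bff{\theta}^{n-1}+\bff{\rho}^{n-1})\times\bff{H}^n}{\bff{\xi}^n}$ is handled with $\bff{H}\in L^\infty_T(\bb{L}^\infty)$. The term $\inpro{\bff{u}_h^{n-1}\times\bff{\eta}^n}{\bff{\xi}^n}$ is bounded by $\norm{\bff{u}_h^{n-1}}{\bb{L}^\infty}h^2\norm{\bff{\xi}^n}{\bb{L}^2}$. The nonlinear remainders are reduced to $\norm{\bff{\theta}^{n-1}}{\bb{H}^1}$ through $\bb{H}^1\hookrightarrow\bb{L}^6$. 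The weight $1+\norm{\bff{u}_h^{n-1}}{\bb{L}^\infty}^2$ is summable in time by \eqref{equ:C infty euler}, so it is admissible as the $\gamma_j$ in Lemma~\ref{lem:disc gron}.

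To reach the statement as worded, with $\sum_j\norm{\bff{\xi}^j}{\bb{L}^2}^2$ carrying no factor $k$, I would \emph{not} multiply by $k$ before summing. Instead I would sum the inequality above directly over $j=1,\dots,n$. The telescoping part contributes $(\mathcal{Y}^0-\mathcal{Y}^n)/k\le\mathcal{Y}^0/k$. Here $\bff{\theta}^0=\bff{0}$ because $\bff{u}_h^0=R_h\bff{u}_0$, and $e^0$ is the initial discrepancy of the auxiliary variable. The remaining sum is $\sum_j C(1+\norm{\bff{u}_h^{j-1}}{\bb{L}^\infty}^2)(\mathcal{Y}^{j-1}+h^4+k^2)$. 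In a separate first pass I would multiply by $k$, sum, and apply Lemma~\ref{lem:disc gron}. This gives the $\bb{H}^1$ and $e^n$ parts of \eqref{equ:error theta n eul}, namely $\max_j\mathcal{Y}^j\le C(h^4+k^2)$.

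The main obstacle is the final step. The direct sum just described only gives $\sum_j\norm{\bff{\xi}^j}{\bb{L}^2}^2\le C(h^4+k^2)/k$, since there are $N\sim T/k$ summands and $\mathcal{Y}^0/k$ is of the same order. Closing the gap to $C(h^4+k^2)$ therefore needs a sharper per-step consistency: each right-hand side term would have to be $O\bigl(k(h^4+k^2)\bigr)$, together with $e^0=0$. I would try first to exploit the exact cancellation in the SAV coupling and the telescoping structure of the cross term in time. If that fails, the statement only holds in the $k$-weighted form.

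The $\bb{L}^\infty$ bound \eqref{equ:error theta Linfty eul} then follows from \eqref{equ:disc sob 2d} applied to $\bff{\theta}^n\in\bb{V}_h$, together with the $\bb{H}^1$ part of \eqref{equ:error theta n eul}.
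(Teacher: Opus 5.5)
Your plan is the paper's proof. You test \eqref{equ:uhn min un} with $\bff{\xi}^n$, the $\bff{H}$-error equation with $\dtt\bff{\theta}^n$, and the $r$-error equation with $2e^n$, then bound the remainders via Lemma~\ref{lem:nonlinear euler}, \eqref{equ:dt theta L2}, $\abs{r_h^n}+\abs{e^n}\le C$ and Young's inequality (the current-level terms $C\abs{e^n}^2$ are absorbed for small $k$), close with Lemma~\ref{lem:disc gron} using the summable weight $1+\norm{\bff{u}_h^{j}}{\bb{L}^\infty}^2$ from \eqref{equ:C infty euler}, and finally apply \eqref{equ:disc sob 2d} for the 2D bound.

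Your ``main obstacle'' is only a misprint in the statement: the paper's own proof concludes $\norm{\bff{\theta}^n}{\bb{H}^1}^2+\abs{e^n}^2+\alpha k\sum_{j=1}^n\norm{\bff{\xi}^j}{\bb{L}^2}^2\le C(h^4+k^2)$, which is exactly your $k$-weighted first pass, so you should not chase the unweighted version (generically $\norm{\bff{\xi}^j}{\bb{L}^2}$ is of size $h^2+k$ at every step, so it is not expected to hold).
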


\begin{proof}
	Firstly, recall that by subtracting the weak formulation of \eqref{equ:llb sav eq1} at time $t=t_n$ from the corresponding equation in scheme \eqref{equ:fem euler}, we have \eqref{equ:uhn min un}.
	Similarly, noting \eqref{equ:Ritz zero}, we have
	\begin{align}
		\label{equ:Hhn min Hn}
		\inpro{\bff{\xi}^n+\bff{\eta}^n}{\bff{\chi}}
		&=
		- \inpro{\nabla \bff{\theta}^n}{\nabla \bff{\chi}}
		-
		\kappa \inpro{\bff{\theta}^n+\bff{\rho}^n}{\bff{\chi}}
		\nonumber\\
		&\quad
		-
		\frac{e^n}{\sqrt{\mathcal{F}[\bff{u}^{n-1}]}} \inpro{g(\bff{u}^{n-1})}{\bff{\chi}}
		-
		r_h^n \inpro{\frac{g(\bff{u}_h^{n-1})}{{\mathcal{F}[\bff{u}_h^{n-1}]}}- \frac{g(\bff{u}^{n-1})}{{\mathcal{F}[\bff{u}^{n-1}]}}}{\bff{\chi}}
		\nonumber\\
		&\quad
		+
		r^n \inpro{\frac{g(\bff{u}^n)}{\sqrt{\mathcal{F}[\bff{u}^n]}} - \frac{g(\bff{u}^{n-1})}{\sqrt{\mathcal{F}[\bff{u}^{n-1}]}}}{\bff{\chi}}, \quad \forall \bff{\chi}\in \bb{V}_h.
	\end{align}
	Furthermore, subtracting \eqref{equ:llb sav eq3} from the second equation in \eqref{equ:fem euler}, we obtain
	\begin{align}\label{equ:rhn min rn}
		&\dtt e^n + \dtt r^n - \partial_t r^n 
		\nonumber\\
		&=
		\frac{1}{2\sqrt{\mathcal{F}[\bff{u}_h^{n-1}]}} \inpro{g(\bff{u}_h^{n-1})}{\dtt \bff{\theta}^n+ \dtt \bff{\rho}^n}
		+
		\inpro{\frac{g(\bff{u}_h^{n-1})}{2\sqrt{\mathcal{F}[\bff{u}_h^{n-1}]}} - \frac{g(\bff{u}^{n-1})}{2\sqrt{\mathcal{F}[\bff{u}^{n-1}]}}}{\dtt \bff{u}^n}
		\nonumber\\
		&\quad
		+
		\inpro{\frac{g(\bff{u}^{n-1})}{2\sqrt{\mathcal{F}[\bff{u}^{n-1}]}}}{\dtt \bff{u}^n-\partial_t \bff{u}^n}.
	\end{align}
	Now, noting \eqref{equ:a dot ab}, we set $\bff{\phi}= \bff{\xi}^n$ in \eqref{equ:uhn min un} and $\bff{\chi}= \dtt \bff{\theta}^n$ in \eqref{equ:Hhn min Hn}, and multiply \eqref{equ:rhn min rn} by $2e^n$. We then add the resulting equations to obtain
	\begin{align}\label{equ:ineq theta e}
		&\frac{1}{2k} \left(\norm{\nabla \bff{\theta}^n}{\bb{L}^2}^2 - \norm{\nabla \bff{\theta}^{n-1}}{\bb{L}^2}^2 \right) 
		+
		\frac{\kappa}{2k} \left(\norm{\bff{\theta}^n}{\bb{L}^2}^2 - \norm{\bff{\theta}^{n-1}}{\bb{L}^2}^2 \right)
		+
		\frac{1}{k}  \left(\abs{e^n}^2- \abs{e^{n-1}}^2\right)
		\nonumber\\
		&\quad
		+
		\frac{1}{2k} \left(\norm{\nabla \bff{\theta}^n-\nabla \bff{\theta}^{n-1}}{\bb{L}^2}^2 + \kappa \norm{\bff{\theta}^n-\bff{\theta}^{n-1}}{\bb{L}^2}^2\right) 
		+
		\frac{1}{k} \abs{e^n-e^{n-1}}^2
		+
		\alpha \norm{\bff{\xi}^n}{\bb{L}^2}^2
		\nonumber\\
		&=
		\inpro{\dtt \bff{\rho}^n+\dtt \bff{u}^n-\partial_t \bff{u}^n}{\bff{\xi}^n}
		-
		\inpro{\bff{\eta}^n}{\dtt \bff{\theta}^n}
		+
		\gamma \inpro{\bff{u}_h^{n-1}\times \bff{\eta}^n}{\bff{\xi}^n}
		\nonumber\\
		&\quad
		+
		\gamma \inpro{(\bff{\theta}^{n-1}+\bff{\rho}^{n-1})\times \bff{H}^n}{\bff{\xi}^n}
		+
		\gamma \inpro{(\bff{u}^{n-1}-\bff{u}^n)\times \bff{H}^n}{\bff{\xi}^n}
		-
		\alpha \inpro{\bff{\eta}^n}{\bff{\xi}^n}
		-
		\kappa \inpro{\bff{\rho}^n}{\dtt \bff{\theta}^n}
		\nonumber\\
		&\quad
		-
		r_h^n \inpro{\frac{g(\bff{u}_h^{n-1})}{\sqrt{\mathcal{F}[\bff{u}_h^{n-1}]}}- \frac{g(\bff{u}^{n-1})}{\sqrt{\mathcal{F}[\bff{u}^{n-1}]}}}{\dtt \bff{\theta}^n}
		+
		r^n \inpro{\frac{g(\bff{u}^n)}{\sqrt{\mathcal{F}[\bff{u}^n]}} - \frac{g(\bff{u}^{n-1})}{\sqrt{\mathcal{F}[\bff{u}^{n-1}]}}}{\dtt \bff{\theta}^n}
		\nonumber\\
		&\quad
		+
		e^n \inpro{\frac{g(\bff{u}_h^{n-1})}{\sqrt{\mathcal{F}[\bff{u}_h^{n-1}]}}- \frac{g(\bff{u}^{n-1})}{\sqrt{\mathcal{F}[\bff{u}^{n-1}]}}}{\dtt \bff{\theta}^n}
		+
		\frac{e^n}{\sqrt{\mathcal{F}[\bff{u}_h^{n-1}]}} \inpro{g(\bff{u}_h^{n-1})}{\dtt \bff{\rho}^n}
		\nonumber\\
		&\quad
		+
		e^n
		\inpro{\frac{g(\bff{u}_h^{n-1})}{\sqrt{\mathcal{F}[\bff{u}_h^{n-1}]}} - \frac{g(\bff{u}^{n-1})}{\sqrt{\mathcal{F}[\bff{u}^{n-1}]}}}{\dtt \bff{u}^n}
		+
		e^n \inpro{\frac{g(\bff{u}^{n-1})}{\sqrt{\mathcal{F}[\bff{u}^{n-1}]}}}{\dtt \bff{u}^n-\partial_t \bff{u}^n}
		\nonumber\\
		&=: I_1+I_2+\ldots+I_{13}.
	\end{align}
	We will estimate each term $I_j, j=1,2,\ldots,13$, appearing on the right-hand side of \eqref{equ:ineq theta e}. Firstly, by \eqref{equ:dt vn Lp}, \eqref{equ:Ritz ineq}, \eqref{equ:dt vn min par v}, and Young's inequality, we have
	\begin{align*}
		\abs{I_1}\leq Ch^4 + Ck^2 + \frac{\alpha}{20} \norm{\bff{\xi}^n}{\bb{L}^2}^2.
	\end{align*}
	Secondly, by \eqref{equ:Ritz ineq}, \eqref{equ:dt theta L2}, and Young's inequality,
	\begin{align*}
		\abs{I_2}
		\leq
		C\left(1+\norm{\bff{u}_h^{n-1}}{\bb{L}^\infty}^2\right) h^4 
		+
		C\norm{\bff{\theta}^{n-1}}{\bb{L}^2}^2 
		+
		Ck^2
		+
		\frac{\alpha}{20} \norm{\bff{\xi}^n}{\bb{L}^2}^2.
	\end{align*}
	For the third term, by Young's inequality and \eqref{equ:Ritz ineq}, we obtain
	\begin{align*}
		\abs{I_3}
		&\leq
		C\norm{\bff{u}_h^{n-1}}{\bb{L}^\infty}^2 h^4
		+
		\frac{\alpha}{20} \norm{\bff{\xi}^n}{\bb{L}^2}^2.
	\end{align*}
	For the terms $I_4$, $I_5$, $I_6$, and $I_7$, by similar argument we obtain
	\begin{align*}
		\abs{I_4}
		&\leq
		C\norm{\bff{\theta}^{n-1}+ \bff{\rho}^{n-1}}{\bb{L}^2} \norm{\bff{H}^n}{\bb{L}^\infty} \norm{\bff{\xi}^n}{\bb{L}^2}
		\leq
		C\norm{\bff{\theta}^{n-1}}{\bb{L}^2}^2
		+
		Ch^4 
		+
		\frac{\alpha}{20} \norm{\bff{\xi}^n}{\bb{L}^2}^2,
		\\
		\abs{I_5}
		&\leq
		C\norm{\bff{u}^{n-1}-\bff{u}^n}{\bb{L}^2} \norm{\bff{H}^n}{\bb{L}^\infty} \norm{\bff{\xi}^n}{\bb{L}^2}
		\leq
		Ck^2 + 
		\frac{\alpha}{20} \norm{\bff{\xi}^n}{\bb{L}^2}^2,
		\\
		\abs{I_6}
		&\leq
		Ch^4 +
		\frac{\alpha}{20} \norm{\bff{\xi}^n}{\bb{L}^2}^2,
		\\
		\abs{I_7}
		&\leq
		C \left(1+\norm{\bff{u}_h^{n-1}}{\bb{L}^\infty}^2\right) h^4 + Ck^2 + C\norm{\bff{\theta}^{n-1}}{\bb{L}^2}^2 + \frac{\alpha}{20} \norm{\bff{\xi}^n}{\bb{L}^2}^2.
	\end{align*}
	Next, for the term $I_8$, we apply \eqref{equ:uhn H1 stab}, \eqref{equ:g uhn F}, \eqref{equ:dt theta L2}, H\"older's and Young's inequalities, as well as the Sobolev embedding $\bb{H}^1\hookrightarrow \bb{L}^6$ to obtain
	\begin{align*}
		\abs{I_8}
		&\leq
		C \abs{r_h^n} \norm{\frac{g(\bff{u}_h^{n-1})}{\sqrt{\mathcal{F}[\bff{u}_h^{n-1}]}} - \frac{g(\bff{u}^{n-1})}{\sqrt{\mathcal{F}[\bff{u}^{n-1}]}}}{\bb{L}^2} \norm{\dtt \bff{\theta}^n}{\bb{L}^2}
		\\
		&\leq
		C\left(1+\norm{\bff{u}_h^{n-1}}{\bb{L}^\infty}^2 \right) \norm{\bff{\theta}^{n-1}}{\bb{H}^1}^2
		+
		C\left(1+\norm{\bff{u}_h^{n-1}}{\bb{L}^\infty}^2 \right) h^4
		+
		Ck^2 
		+
		\frac{\alpha}{20} \norm{\bff{\xi}^n}{\bb{L}^2}^2.
	\end{align*}
	Similarly, for the next term, by \eqref{equ:g un F} and \eqref{equ:dt theta L2},
	\begin{align*}
		\abs{I_9}
		&\leq
		C\left(1+\norm{\bff{u}_h^{n-1}}{\bb{L}^\infty}^2 \right) k^2 
		+
		C\norm{\bff{\theta}^{n-1}}{\bb{L}^2}^2
		+
		Ch^4.
	\end{align*}
	For the term $I_{10}$, by the same argument as in the estimate for $I_8$, we have
	\begin{align*}
		\abs{I_{10}}
		&\leq
		C\left(1+\norm{\bff{u}_h^{n-1}}{\bb{L}^\infty}^2 \right) \norm{\bff{\theta}^{n-1}}{\bb{H}^1}^2
		+
		C\left(1+\norm{\bff{u}_h^{n-1}}{\bb{L}^\infty}^2 \right) h^4
		+
		Ck^2 
		+
		\frac{\alpha}{20} \norm{\bff{\xi}^n}{\bb{L}^2}^2.
	\end{align*}
	Next, for the term $I_{11}$, by Young's inequality, the definition of $g$, and \eqref{equ:Ritz ineq},
	\begin{align*}
		\abs{{I_{11}}}
		&\leq
		C\abs{e^n}^2 + C\norm{g(\bff{u}_h^{n-1})}{\bb{L}^2}^2 \norm{\dtt \bff{\rho}^n}{\bb{L}^2}^2
		\leq
		C\abs{e^n}^2 + Ch^4.
	\end{align*}
	For the next term, by \eqref{equ:g uhn F}, \eqref{equ:dt vn Lp}, \eqref{equ:Ritz ineq}, and Young's inequality we obtain
	\begin{align*}
		\abs{I_{12}}
		&\leq
		C\norm{\dtt \bff{u}^n}{\bb{L}^2}^2 \abs{e^n}^2
		+
		C\norm{\bff{\theta}^{n-1}+\bff{\rho}^{n-1}}{\bb{L}^6}^2
		\leq
		C\abs{e^n}^2 + C\norm{\bff{\theta}^{n-1}}{\bb{H}^1}^2 + Ch^4.
	\end{align*}
	Finally, for the last term, by Young's inequality, \eqref{equ:dt vn min par v}, and the assumed regularity for $\bff{u}$ in \eqref{equ:reg u euler}, we infer that
	\begin{align*}
		\abs{I_{13}}
		\leq
		C\abs{e^n}^2 + Ck^2.
	\end{align*}
	Altogether, we collect these estimates and continue from \eqref{equ:ineq theta e}. Summing over $j\in \{1,2,\ldots, n\}$ and rearranging the terms, we obtain for sufficiently small $k>0$,
	\begin{align*}
		&\norm{\bff{\theta}^n}{\bb{H}^1}^2 + \abs{e^n}^2
		+
		\alpha k \sum_{j=1}^n \norm{\bff{\xi}^j}{\bb{L}^2}^2
		\\
		&\leq
		Ck(h^4+k^2) \sum_{j=1}^{n-1} \left(1+\norm{\bff{u}_h^j}{\bb{L}^\infty}^2 \right)
		+
		Ck \sum_{j=1}^{n-1} \left(1+\norm{\bff{u}_h^j}{\bb{L}^\infty}^2 \right) \norm{\bff{\theta}^j}{\bb{H}^1}^2
		+
		Ck \sum_{j=1}^{n-1} \abs{e^j}^2.
	\end{align*}
	By the discrete Gronwall lemma, noting the stability estimate \eqref{equ:C infty euler}, we deduce
	\begin{align*}
		\norm{\bff{\theta}^n}{\bb{H}^1}^2 + \abs{e^n}^2
		+
		\alpha k \sum_{j=1}^n \norm{\bff{\xi}^j}{\bb{L}^2}^2
		&\leq
		C(h^4+k^2) \exp\left[Ck\sum_{j=1}^{n-1} \left(1+\norm{\bff{u}_h^j}{\bb{L}^\infty}^2 \right) \right]
		\\
		&\leq
		C(h^4+k^2),
	\end{align*}
	proving \eqref{equ:error theta n eul}. 
	
	Finally, estimate \eqref{equ:error theta Linfty eul} follows from \eqref{equ:error theta n eul} and the discrete 2D Sobolev inequality~\eqref{equ:disc sob 2d}. This completes the proof of the proposition.
\end{proof}

The following theorem establishes an optimal rate of convergence for scheme~\eqref{equ:fem euler} in various norms.

\begin{theorem}\label{the:err euler}
	Let $\bff{u}_h^n$ be given by Algorithm~\ref{alg:euler}, and let $\bff{u}$ be the solution of \eqref{equ:llb sav a}. For sufficiently small $h,k>0$, and $n=1,2,\ldots,\lfloor T/k \rfloor$,
	\begin{align}\label{equ:error Hs eul}
		\norm{\bff{u}_h^n- \bff{u}(t_n)}{\bb{H}^s}
		\leq
		C(h^{2-s}+k),\quad s\in \{0,1\}.
	\end{align}
	If $\mathscr{D}\subset \bb{R}^2$, then
	\begin{align}\label{equ:error Linf eul}
		\norm{\bff{u}_h^n- \bff{u}(t_n)}{\bb{L}^\infty}
		\leq
		C(h^2+k) \abs{\ln h}^{\frac12}.
	\end{align}
	Furthermore, the energy functional $\widetilde{\mathcal{E}}[\bff{u}_h^n]$ is a good approximation of $\mathcal{E}[\bff{u}(t_n)]$ in the sense that
	\begin{align}\label{equ:good ener eul}
		\abs{\widetilde{\mathcal{E}}[\bff{u}_h^n]- \mathcal{E}[\bff{u}(t_n)]} \leq C(h+k).
	\end{align}
	The constant $C$ in \eqref{equ:error Hs eul}, \eqref{equ:error Linf eul}, and \eqref{equ:good ener eul} {depends on $T$ and the exact solution $\bff{u}$}, but is independent of $n$, $h$, and $k$.
\end{theorem}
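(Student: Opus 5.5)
The plan is to combine the superconvergence estimate \eqref{equ:error theta n eul} with the Ritz projection bounds via the splitting \eqref{equ:split un}, $\bff{u}_h^n-\bff{u}(t_n)=\bff{\theta}^n+\bff{\rho}^n$.

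\textbf{Step 1: the $\bb{H}^s$ estimate \eqref{equ:error Hs eul}.} By the triangle inequality,
\[
\norm{\bff{u}_h^n-\bff{u}^n}{\bb{H}^s}\le \norm{\bff{\theta}^n}{\bb{H}^1}+\norm{\bff{\rho}^n}{\bb{H}^s}.
\]
The first term is bounded by $C(h^2+k)$ thanks to \eqref{equ:error theta n eul}. The second is bounded by $Ch^{2-s}\norm{\bff{u}}{L^\infty_T(\bb{W}^{2,p})}$ using \eqref{equ:Ritz ineq} with $p=2$, where the regularity assumption \eqref{equ:reg u euler} makes the constant finite. Since $h^2\le h^{2-s}$, this gives \eqref{equ:error Hs eul}.

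\textbf{Step 2: the $\bb{L}^\infty$ estimate \eqref{equ:error Linf eul} in 2D.} Here I use \eqref{equ:error theta Linfty eul}, which gives $\norm{\bff{\theta}^n}{\bb{L}^\infty}\le C(h^2+k)\abs{\ln h}^{1/2}$. I combine it with \eqref{equ:Ritz infty ineq}, which gives $\norm{\bff{\rho}^n}{\bb{L}^\infty}\le Ch^2\abs{\ln h}^{1/2}$ since $\bff{u}\in L^\infty_T(\bb{W}^{2,\infty})$.

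\textbf{Step 3: the energy comparison \eqref{equ:good ener eul}.} Write $r^n=\sqrt{\F{\bff{u}^n}}$, so that $\mathcal{E}[\bff{u}^n]=\frac12\norm{\nabla\bff{u}^n}{\bb{L}^2}^2+\frac\kappa2\norm{\bff{u}^n}{\bb{L}^2}^2+\abs{r^n}^2$, and split the difference into three pieces.
\begin{itemize}
\item For the gradient part, use $\abs{\norm{\bff{a}}{}^2-\norm{\bff{b}}{}^2}\le\norm{\bff{a}-\bff{b}}{}\,(\norm{\bff{a}}{}+\norm{\bff{b}}{})$. Together with the uniform bound \eqref{equ:uhn H1 stab} and Step 1 with $s=1$, this gives $C(h+k)$.
\item The $\bb{L}^2$ part is handled the same way and is of higher order.
\item For the SAV part, $\abs{\abs{r_h^n}^2-\abs{r^n}^2}\le\abs{e^n}\,(\abs{r_h^n}+\abs{r^n})\le C(h^2+k)$, by \eqref{equ:error theta n eul} and \eqref{equ:uhn H1 stab}.
\end{itemize}

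The main obstacle is a bookkeeping subtlety in the SAV component. The scheme sets $r_h^0=\mathcal{F}[\bff{u}_h^0]$ rather than its square root, while \eqref{equ:error theta n eul} implicitly requires $e^0$ to be $O(h^2)$. I would therefore interpret the initialisation as $r_h^0=\sqrt{\F{R_h\bff{u}_0}}$. Then \eqref{equ:F un1 inter} together with \eqref{equ:Ritz ineq} gives $\abs{e^0}\le Ch^2$, consistent with the proposition.

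Beyond that point, all the remaining steps are direct consequences of earlier results.
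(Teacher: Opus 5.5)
Your proposal is correct and follows the paper's proof essentially step for step. You use the splitting $\bff{u}_h^n-\bff{u}^n=\bff{\theta}^n+\bff{\rho}^n$ with \eqref{equ:error theta n eul}, \eqref{equ:error theta Linfty eul} and the Ritz bounds \eqref{equ:Ritz ineq}, \eqref{equ:Ritz infty ineq}, and then the inequality $\abs{\abs{\bff{a}}^2-\abs{\bff{b}}^2}\le(\abs{\bff{a}}+\abs{\bff{b}})\abs{\bff{a}-\bff{b}}$ together with \eqref{equ:uhn H1 stab} for the energy. Your remark that the initialisation should read $r_h^0=\sqrt{\F{\bff{u}_h^0}}$, so that $\abs{e^0}\le Ch^2$, correctly fixes a typo in the scheme; the superconvergence estimate, and hence the theorem, tacitly relies on this.
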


\begin{proof}
	The estimate \eqref{equ:error Hs eul} follows from \eqref{equ:split un}, \eqref{equ:error theta n eul}, \eqref{equ:Ritz ineq}, and the triangle inequality. Correspondingly, the estimate \eqref{equ:error Linf eul} follows from \eqref{equ:split un}, \eqref{equ:error theta Linfty eul}, \eqref{equ:Ritz infty ineq}, and the triangle inequality.
	
	Finally, we have
	\begin{align*}
		&\abs{\widetilde{\mathcal{E}}[\bff{u}_h^n]- \mathcal{E}[\bff{u}(t_n)]}
		\\
		&\leq
		\frac12 \int_{\mathscr{D}} \abs{\abs{\nabla \bff{u}_h^n}^2 - \abs{\nabla \bff{u}(t_n)}^2} \dx
		+
		\frac{\kappa}{2} \int_{\mathscr{D}} \abs{\abs{\bff{u}_h^n}^2 - \abs{\bff{u}(t_n)}^2} \dx
		+
		\abs{\abs{r_h^n}^2 - \abs{r(t_n)}^2} 
		\\
		&\leq
		C\norm{\bff{u}_h^n- \bff{u}(t_n)}{\bb{H}^1}
		+
		C \abs{r_h^k-r(t_k)},
	\end{align*}
	where in the last step we used the elementary inequality $\abs{\abs{\bff{a}}^2-\abs{\bff{b}}^2} \leq \left(\abs{\bff{a}}+ \abs{\bff{b}}\right) \abs{\bff{a}-\bff{b}}$ together with H\"older's inequality. This implies \eqref{equ:good ener eul}, in view of \eqref{equ:error Hs eul}, as required.
\end{proof}

\section{SAV-FEM with linearised BDF2 time discretisation}\label{sec:sav fem cn}

In this section, we construct a linear fully-discrete SAV FEM based on BDF2 time discretisation. Let us define some notations. Firstly, for any function $\bff{v}$, we write $\bff{v}^n:= \bff{v}(t_n)$. Secondly, for a sequence of discrete functions $\{\bff{v}_h^n\}$, we write
\begin{align}\label{equ:vh bar}
	\overline{\bff{v}}_h^{n-1}:= 2\bff{v}_h^{n-1} - \bff{v}_h^{n-2}.
\end{align}
For $n\geq 2$, we also define
\begin{align}\label{equ:Dt vn}
	\Dtt \bff{v}^n:= \dtt \bff{v}^n + \frac12 \left(\dtt \bff{v}^n-\dtt \bff{v}^{n-1}\right) 
	=
	\frac{3\bff{v}^n-4\bff{v}^{n-1}+ \bff{v}^{n-2}}{2k}.
\end{align}

The scheme based on the linearised BDF2 time-stepping is described as follows. We start with $\bff{u}_h^0\in \bb{V}_h$ and $r_h^0= \mathcal{F}(\bff{u}_h^0)\in \bb{R}$, where the functional $\mathcal{F}$ is defined in \eqref{equ:F u}. Given $(\bff{u}_h^{n-1},r_h^{n-1})$ and $(\bff{u}_h^{n-2},r_h^{n-2})$, we seek $(\bff{u}_h^n, r_h^n)\in \bb{V}_h\times \bb{R}$ satisfying
\begin{equation}\label{equ:fem cn}
	\begin{aligned}
		\inpro{\Dtt \bff{u}_h^n}{\phi} 
		&=
		-\gamma \inpro{\overline{\bff{u}}_h^{n-1} \times \bff{H}_h^n}{\phi}
		+\alpha \inpro{\bff{H}_h^n}{\phi}, \quad \forall \bff{\phi}\in \bb{V}_h,
		\\
		\Dtt r_h^n
		&=
		\frac{1}{2\sqrt{\mathcal{F}[\overline{\bff{u}}_h^{n-1}]}} \inpro{g(\overline{\bff{u}}_h^{n-1})}{\Dtt \bff{u}_h^n},
	\end{aligned} 
\end{equation}
where
\begin{align}\label{equ:fem cn Hhn}
	\bff{H}_h^n
	&=
	\Delta_h \bff{u}_h^n - \kappa \bff{u}_h^n - \Pi_h \left[\frac{r_h^n}{\sqrt{\mathcal{F}[\overline{\bff{u}}_h^{n-1}]}}\, g(\overline{\bff{u}}_h^{n-1}) \right].
\end{align}
Here, $\Delta_h$ and $\Pi_h$ are, respectively, the discrete Laplacian and the $\bb{L}^2$-projection operators. The functions $\mathcal{F}$ and $g$ are defined in \eqref{equ:F u} and \eqref{equ:g u}, respectively. 

Note that since \eqref{equ:fem cn} is a two-step scheme, with $\bff{u}_h^0$ and $r_h^0$ given, we also need to define $\bff{u}_h^1$ and $r_h^1$, for instance by performing one step of the implicit BDF2 method or $1/k$ steps of the linearised Euler method \eqref{equ:fem euler} with time-step size $k^2$. For simplicity of presentation, we assume that $\bff{u}_h^1$ and $r_h^1$ have been properly initialised such that
\begin{align}\label{equ:cn init}
	\norm{\bff{u}_h^1- \bff{u}(t_1)}{\bb{H}^s}
	+
	\abs{r_h^1-r(t_1)}
	\leq
	C(h^{2-s}+k^2), \quad s\in \{0,1\},
\end{align}
where $C$ is independent of $h$ and $k$.

The above scheme can be effectively implemented in the following algorithm:

\begin{algorithm}[Linearised BDF2--SAV FEM]\label{alg:bdf}
	Let $h>0$ and $k>0$ be given.
	\\
	\textbf{Input}: Given $\bff{u}_h^0\in \bb{V}_h$ and $r_h^0=\mathcal{F}[\bff{u}_h^0]\in \bb{R}$.
	\begin{enumerate}
		\item Compute $(\bff{u}_h^1, r_h^1)$, say by employing one step of the implicit BDF2 method or $1/k$ steps of the linearised Euler method (Algorithm~\ref{alg:euler}) with time-step size $k^2$.
	\end{enumerate}
	\textbf{For} $n=2$ to $N$, where $N=\lfloor T/k \rfloor$, \textbf{do}:
	\begin{enumerate}
		\item Substitute equation \eqref{equ:fem cn Hhn} into the first equation in \eqref{equ:fem cn}, and compute $\bff{u}_h^n\in \bb{V}_h$;
		\item Compute $r_h^n$ using the second equation in \eqref{equ:fem cn}.
	\end{enumerate}
	\textbf{Output}: a sequence of discrete functions $\{\bff{u}_h^n\}_{1\leq n\leq N}$ and real numbers $\{r_h^n\}_{1\leq n\leq N}$.
\end{algorithm}

Well-posedness of Algorithm~\ref{alg:bdf} follows by the same argument as in Proposition~\ref{pro:well euler}.
The stability and convergence properties for the scheme will be derived next. 
To this end, we first define a modified energy $\widehat{\mathcal{E}}[\bff{u}_h^n]$ of the finite element solution $\bff{u}_h^n$ given by
\begin{align}\label{equ:modif ener cn}
	\widehat{\mathcal{E}}[\bff{u}_h^n]
	&:=
	\widetilde{\mathcal{E}}[\bff{u}_h^n]
	+
	\frac12 \norm{2\nabla \bff{u}_h^n- \nabla \bff{u}_h^{n-1}}{\bb{L}^2}^2
	+
	\frac{\kappa}{2} \norm{2\bff{u}_h^n- \bff{u}_h^{n-1}}{\bb{L}^2}^2
	+
	\abs{2r_h^n- r_h^{n-1}}^2,
\end{align}
with $\widetilde{\mathcal{E}}[\bff{u}_h^n]$ defined in \eqref{equ:ener modified}. We also assume the following regularity for the exact solution $\bff{u}$:
\begin{align}\label{equ:reg u cn}
	\bff{u}\in L^\infty_T(\bb{W}^{2,\infty}) \cap W^{1,\infty}_T(\bb{H}^1) \cap W^{2,\infty}_T(\bb{H}^1) \cap W^{3,\infty}_T(\bb{L}^2).
\end{align} 

In the analysis, the following identity will be useful: for any $\bff{a},\bff{b},\bff{c}\in \bb{R}^3$,
\begin{align}\label{equ:a abc}
	2\bff{a} \cdot (3\bff{a}-4\bff{b}+\bff{c})
	=
	\abs{\bff{a}}^2 - \abs{\bff{b}}^2
	+
	\abs{2\bff{a}-\bff{b}}^2
	-
	\abs{2\bff{b}-\bff{c}}^2
	+
	\abs{\bff{a}-2\bff{b}+\bff{c}}^2.
\end{align}
The following unconditional energy stability result can now be established.

\begin{proposition}
	Let $\bff{u}_h^n$ be given by Algorithm~\ref{alg:bdf}. For $n=1,2,\ldots,\lfloor T/k \rfloor$,
	\begin{align}\label{equ:disc ener cn}
		\widehat{\mathcal{E}}[\bff{u}_h^n] \leq \widehat{\mathcal{E}}[\bff{u}_h^{n-1}].
	\end{align}
	Here, $\widehat{\mathcal{E}}[\bff{u}_h^n]$ is a modified energy of the finite element solution $\bff{u}_h^n$ defined in \eqref{equ:modif ener cn}.
\end{proposition}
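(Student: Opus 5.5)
The plan mirrors the proof of \eqref{equ:disc ener ineq}, with the two-step identity \eqref{equ:a abc} in place of \eqref{equ:a dot ab}. For $n\ge 2$ I would take $\bff{\phi}=\bff{H}_h^n$ in the first equation of \eqref{equ:fem cn}. The cross-product term vanishes pointwise, since $(\overline{\bff{u}}_h^{n-1}\times\bff{H}_h^n)\cdot\bff{H}_h^n=0$. This leaves
\begin{align*}
\inpro{\Dtt \bff{u}_h^n}{\bff{H}_h^n} = \alpha\norm{\bff{H}_h^n}{\bb{L}^2}^2 .
\end{align*}

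Next I would take the $\bb{L}^2$ inner product of \eqref{equ:fem cn Hhn} with $\Dtt\bff{u}_h^n\in\bb{V}_h$. By \eqref{equ:disc laplacian}, $\inpro{\Delta_h\bff{u}_h^n}{\Dtt\bff{u}_h^n}=-\inpro{\nabla\bff{u}_h^n}{\nabla\Dtt\bff{u}_h^n}$. Since $\Dtt\bff{u}_h^n\in\bb{V}_h$, the projection $\Pi_h$ can be dropped by \eqref{equ:orth proj}. The nonlinear term then reads $\frac{r_h^n}{\sqrt{\mathcal{F}[\overline{\bff{u}}_h^{n-1}]}}\inpro{g(\overline{\bff{u}}_h^{n-1})}{\Dtt\bff{u}_h^n}$. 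By the second equation of \eqref{equ:fem cn} this equals $2r_h^n\,\Dtt r_h^n$. Combining with the first step gives
\begin{align*}
\inpro{\nabla \bff{u}_h^n}{\nabla \Dtt\bff{u}_h^n} + \kappa\inpro{\bff{u}_h^n}{\Dtt\bff{u}_h^n} + 2 r_h^n \Dtt r_h^n = -\alpha \norm{\bff{H}_h^n}{\bb{L}^2}^2 \le 0 .
\end{align*}

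Now I would apply \eqref{equ:a abc} with $\bff{a}=\bff{u}_h^n$, $\bff{b}=\bff{u}_h^{n-1}$, $\bff{c}=\bff{u}_h^{n-2}$. This is done pointwise for $\nabla$ and for $\bff{u}$, and again for the scalars $r_h^n, r_h^{n-1}, r_h^{n-2}$, then integrated. Since $\Dtt$ carries the factor $1/(2k)$, each term becomes $\frac{1}{4k}$ times a telescoping difference plus a nonnegative remainder such as $\norm{\nabla(\bff{u}_h^n-2\bff{u}_h^{n-1}+\bff{u}_h^{n-2})}{\bb{L}^2}^2$. For the $r$ term the prefactor $2$ in $2r_h^n\Dtt r_h^n$ yields $\frac{1}{2k}$ times the corresponding difference.

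The main point to verify is the bookkeeping of weights: the resulting telescoping quantity must be a positive multiple of $\widehat{\mathcal{E}}[\bff{u}_h^n]$ as defined in \eqref{equ:modif ener cn}. The quantity produced is $\tfrac12\big(\tfrac12\|\nabla\bff{a}\|^2+\tfrac12\|\nabla(2\bff{a}-\bff{b})\|^2\big)$ plus the analogous $\kappa$-term, plus $\tfrac12\big(|r_h^n|^2+|2r_h^n-r_h^{n-1}|^2\big)$. This is exactly $\tfrac12\widehat{\mathcal{E}}[\bff{u}_h^n]$ after matching coefficients with \eqref{equ:ener modified}. Multiplying by $4k$ and discarding the nonnegative remainders and the dissipation term $\alpha\norm{\bff{H}_h^n}{\bb{L}^2}^2$ then yields \eqref{equ:disc ener cn} for $n\ge2$.

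For $n=1$ the BDF2 scheme is not used, and the inequality holds only if the initialisation step is compatible. If the paper intends $n\ge 1$, one must either assume $\widehat{\mathcal{E}}[\bff{u}_h^1]\le\widehat{\mathcal{E}}[\bff{u}_h^0]$ with a convention such as $\bff{u}_h^{-1}=\bff{u}_h^0$, or restrict the statement to $n\ge2$. I expect this startup issue, rather than the main computation, to be the only delicate point.
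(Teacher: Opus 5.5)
Your proposal is correct and follows the paper's proof: test the first equation with $\bff{H}_h^n$, pair \eqref{equ:fem cn Hhn} with $\Dtt\bff{u}_h^n$, use the $r$-equation times $2r_h^n$, and apply \eqref{equ:a abc}; your weight bookkeeping is right, and it shows that the dissipation term in \eqref{equ:hat E min 1} should carry $2\alpha k$ rather than $\alpha k$, which is harmless for the inequality. Your caveat about $n=1$ is also well founded, since $\widehat{\mathcal{E}}[\bff{u}_h^0]$ involves the undefined $\bff{u}_h^{-1}$ and the argument genuinely covers only $n\ge 2$.
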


\begin{proof}
	Setting $\bff{\phi}= \bff{H}_h^n$ in \eqref{equ:fem cn}, taking the inner product of \eqref{equ:fem cn Hhn} with $\Dtt \bff{u}_h^n$, and multiplying the second equation in \eqref{equ:fem cn} with $2r_h^n$, then combining the resulting equations, noting \eqref{equ:a abc}, we obtain
	\begin{align}\label{equ:hat E min 1}
		&\widehat{\mathcal{E}}[\bff{u}_h^n]- \widehat{\mathcal{E}}[\bff{u}_h^{n-1}]
		+
		\alpha k\norm{\bff{H}_h^n}{\bb{L}^2}^2
		+
		\frac12
		\norm{\nabla\bff{u}_h^n-2 \nabla\bff{u}_h^{n-1}+\nabla\bff{u}_h^{n-2}}{\bb{L}^2}^2
		\nonumber\\
		&\quad
		+
		\frac{\kappa}{2} \norm{\bff{u}_h^n-2 \bff{u}_h^{n-1}+\bff{u}_h^{n-2}}{\bb{L}^2}^2
		+
		\abs{r_h^n -2r_h^{n-1}+ r_h^{n-2}}^2
		= 0,
	\end{align}
	which implies \eqref{equ:disc ener cn}.
\end{proof}

\begin{remark}
	We note that $\widehat{\mathcal{E}}[\bff{u}_h^n]$ defined in \eqref{equ:modif ener cn} can be written as 
	\[
	\widehat{\mathcal{E}}[\bff{u}_h^n]
	:=
	\widetilde{\mathcal{E}}[\bff{u}_h^n]
	+
	2\norm{\nabla \bff{u}_h^{n-\frac12}- \nabla \bff{u}_h^{n-1}}{\bb{L}^2}^2
	+
	2\kappa \norm{\bff{u}_h^{n-\frac12}- \bff{u}_h^{n-1}}{\bb{L}^2}^2
	+
	\left(r_h^{n-\frac12}- r_h^{n-1} \right),
	\]
	where $\bff{v}_h^{n-\frac12}:= \frac12 \left(\bff{v}_h^n+\bff{v}_h^{n-1}\right)$ for any $\bff{v}_h^n\in \bb{V}_h$. Given sufficient regularity of the exact solution, this energy functional is an approximation to the original energy $\mathcal{E}$ as $h,k\to 0^+$.
\end{remark}

Next, we show the following stability estimate.

\begin{lemma}
Let $\bff{u}_h^n$ be given by Algorithm~\ref{alg:bdf}. For $n=1,2,\ldots,\lfloor T/k \rfloor$, we have
\begin{align}\label{equ:uhn H1 stab cn}
	\norm{\bff{u}_h^n}{\bb{H}^1}^2 + \abs{r_h^n}^2 + k \sum_{j=1}^n \norm{\bff{H}_h^j}{\bb{L}^2}^2 \leq C_\mathrm{B}.
\end{align}
Consequently, we have
\begin{align}\label{equ:C infty cn}
	k \sum_{j=1}^\infty \norm{\bff{u}_h^j}{\bb{L}^\infty}^2 \leq C_\infty,
\end{align}
where $C_\mathrm{B}$ and $C_\infty$ are constants {depending on $T$ and the exact solution $\bff{u}$}, but are independent of $n$, $h$, and $k$.
\end{lemma}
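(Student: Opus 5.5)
The plan mirrors the proof of \eqref{equ:uhn H1 stab} for the Euler scheme, now using the BDF2 energy identity \eqref{equ:hat E min 1}.

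\textbf{Step 1: the $\bb{H}^1$ and $r$ bounds.} Sum \eqref{equ:hat E min 1} over $j=2,\ldots,n$ and drop the nonnegative second-difference terms. This gives
\[
\widehat{\mathcal{E}}[\bff{u}_h^n] + \alpha k\sum_{j=2}^n \norm{\bff{H}_h^j}{\bb{L}^2}^2 \leq \widehat{\mathcal{E}}[\bff{u}_h^1].
\]
The right-hand side is bounded independently of $h$ and $k$. Indeed, by \eqref{equ:cn init} and the regularity \eqref{equ:reg u cn}, $\norm{\bff{u}_h^1}{\bb{H}^1}$ and $\abs{r_h^1}$ are bounded. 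Since $\bff{u}_h^0=R_h\bff{u}_0$, the stability of the Ritz projection bounds $\norm{\bff{u}_h^0}{\bb{H}^1}$. Also $r_h^0$ is controlled through $\mathcal{F}[\bff{u}_h^0]$ and the embedding $\bb{H}^1\hookrightarrow\bb{L}^4$.

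Next, $\widehat{\mathcal{E}}[\bff{u}_h^n]\geq \widetilde{\mathcal{E}}[\bff{u}_h^n]\geq \frac12\min(1,\kappa)\norm{\bff{u}_h^n}{\bb{H}^1}^2+\abs{r_h^n}^2$. Together with the previous display, this gives the first two terms of \eqref{equ:uhn H1 stab cn} for $n\geq 2$; the case $n=1$ is immediate from \eqref{equ:cn init}.

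The missing term $k\norm{\bff{H}_h^1}{\bb{L}^2}^2$ needs separate treatment, since $\bff{H}_h^1$ comes from the start-up procedure. I would either define $\bff{H}_h^1$ via the starting scheme and use its own energy estimate (e.g.\ \eqref{equ:uhn H1 stab} if Euler steps are used), or bound it crudely. For the crude bound, an inverse estimate gives $k\norm{\Delta_h\bff{u}_h^1}{\bb{L}^2}^2\leq Ckh^{-2}$, but this is not uniform. Instead I would compare with $R_h\bff{u}(t_1)$: note $\Delta_h R_h\bff{u}=\Pi_h\Delta\bff{u}$ is bounded, and, as in the Euler analysis, the error is controlled at the level of assumption \eqref{equ:cn init}. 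I will treat this as part of the initialisation assumption.

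\textbf{Step 2: the $\Delta_h$ bound.} Rearrange \eqref{equ:fem cn Hhn} as
\[
\Delta_h\bff{u}_h^j=\bff{H}_h^j+\kappa\bff{u}_h^j+\Pi_h\bigl[r_h^j\,g(\overline{\bff{u}}_h^{j-1})/\sqrt{\mathcal{F}[\overline{\bff{u}}_h^{j-1}]}\bigr].
\]
Use the $\bb{L}^2$-stability of $\Pi_h$ and the lower bound $\mathcal{F}\geq \frac14\kappa\abs{\mathscr{D}}$. The extrapolated term satisfies
\[
\norm{g(\overline{\bff{u}}_h^{j-1})}{\bb{L}^2}\leq C\norm{\overline{\bff{u}}_h^{j-1}}{\bb{L}^6}^3\leq C\bigl(\norm{\bff{u}_h^{j-1}}{\bb{H}^1}+\norm{\bff{u}_h^{j-2}}{\bb{H}^1}\bigr)^3,
\]
which is bounded by Step 1. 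Summing gives $k\sum_{j}\norm{\Delta_h\bff{u}_h^j}{\bb{L}^2}^2\leq C$ uniformly.

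\textbf{Step 3: the $\bb{L}^\infty$ bound.} Apply the discrete Gagliardo--Nirenberg inequality \eqref{equ:disc lapl L infty}. Since $d\leq 3$, we have $d/2\leq 2$, so
\[
\norm{\bff{u}_h^j}{\bb{L}^\infty}^2\leq C\bigl(1+\norm{\Delta_h\bff{u}_h^j}{\bb{L}^2}^{d/2}\bigr)\leq C\bigl(1+\norm{\Delta_h\bff{u}_h^j}{\bb{L}^2}^2\bigr).
\]
Summing over $j\leq N$ (the sum in \eqref{equ:C infty cn} is understood up to $N=\lfloor T/k\rfloor$) gives \eqref{equ:C infty cn}, with $C_\infty$ depending on $T$.

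The main obstacle is the first time level, that is, controlling $\widehat{\mathcal{E}}[\bff{u}_h^1]$ and $\bff{H}_h^1$ uniformly. Everything else is a routine adaptation of the Euler case.
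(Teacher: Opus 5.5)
Your proposal follows the paper's argument. You sum the BDF2 energy identity \eqref{equ:hat E min 1} to get the $\bb{H}^1$, $r$ and $\ell^2(\bb{L}^2)$ bounds on $\bff{H}_h^j$, then rearrange \eqref{equ:fem cn Hhn} to control $k\sum_j\norm{\Delta_h \bff{u}_h^j}{\bb{L}^2}^2$, and conclude with the discrete Gagliardo--Nirenberg inequality \eqref{equ:disc lapl L infty}. Your treatment of the start-up level is more careful than the paper's, which simply sums from $j=1$ without comment: you sum from $j=2$, bound $\widehat{\mathcal{E}}[\bff{u}_h^1]$ via \eqref{equ:cn init}, and point out that $\bff{H}_h^1$ and $\norm{\Delta_h\bff{u}_h^1}{\bb{L}^2}$ must be supplied by the initialisation.
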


\begin{proof}
Summing \eqref{equ:hat E min 1} over $j\in \{1,2,\ldots, n\}$, we obtain \eqref{equ:uhn H1 stab cn}. Finally, the proof of \eqref{equ:C infty cn} follows similarly to that of \eqref{equ:C infty euler}, but now makes use of \eqref{equ:fem cn Hhn} and \eqref{equ:uhn H1 stab cn} instead.
\end{proof}

To facilitate the proof of the error analysis, we split the approximation error as before, namely:
\begin{align}
	\label{equ:split un cn}
	\bff{u}_h^n-\bff{u}^n &= (\bff{u}_h^n -R_h \bff{u}^n) + (R_h \bff{u}^n- \bff{u}^n) =: \bff{\theta}^n + \bff{\rho}^n,
	\\
	\label{equ:split Hn cn}
	\bff{H}_h^n-\bff{H}^n &= (\bff{H}_h^n -R_h \bff{H}^n) + (R_h \bff{H}^n- \bff{H}^n) =: \bff{\xi}^n + \bff{\eta}^n,
	\\
	\label{equ:split rn cn}
	r_h^n-r^n &= e^n,
\end{align}
where $R_h$ is the Ritz projection defined by \eqref{equ:Ritz}.
We will also make use of the quantities $\overline{\bff{\theta}}^n, \overline{\bff{\rho}}^n, \overline{\bff{\xi}}^n$, and $\overline{\bff{\eta}}^n$ which are defined similarly, noting \eqref{equ:vh bar}. 

Several elementary estimates are collected next. Assume the regularity of $\bff{u}$ stated in \eqref{equ:reg u cn}. For $p\in (1,\infty)$, there exists a positive constant $C_p$ such that
\begin{align}
	%	\label{equ:dtt vn Lp}
	%	\norm{\dtt^2 \bff{v}^n}{\bb{L}^p}
	%	&=
	%	\norm{\frac{1}{k} \left(\dtt \bff{v}^n- \dtt \bff{v}^{n-1}\right)}{\bb{L}^p}
	%	\leq
	%	C_p \norm{\bff{v}}{W^{2,\infty}_T(\bb{L}^p)},
	%	\\
	\label{equ:Dt bdf vn Lp}
	\norm{\mathcal{D}_t \bff{v}^n}{\bb{L}^p}
	&=
	\norm{\dtt \bff{v}^n+ \frac12 \left(\dtt \bff{v}^n- \dtt \bff{v}^{n-1}\right)}{\bb{L}^p}
	\leq
	C_p,
	\\
	\label{equ:dtv part t v cn}
	\norm{\Dtt \bff{v}^n- \partial_t \bff{v}^n}{\bb{L}^p}
	&\leq
	Ck \int_{t_{n-2}}^{t_n} \norm{\partial_t^3 \bff{v}(t)}{\bb{L}^p} \dt \leq Ck^2 \norm{\bff{v}}{W^{3,\infty}_T(\bb{L}^p)}.
\end{align}
Consequently, for $\bff{\rho}^n$ and $\bff{\eta}^n$ defined in \eqref{equ:split un cn} and \eqref{equ:split Hn cn},
\begin{align}
	\label{equ:Dt bdf rhon Lp}
	\norm{\mathcal{D}_t \bff{\rho}^n}{\bb{L}^p}
	&=
	\norm{\dtt \bff{\rho}^n+ \frac12 \left(\dtt \bff{\rho}^n- \dtt \bff{\rho}^{n-1}\right)}{\bb{L}^p}
	\leq
	C_p h^2,
	\quad \text{and } \;
	\norm{\mathcal{D}_t \bff{\eta}^n}{\bb{L}^p}
	\leq
	C_p h^2.
\end{align}
Further preliminary estimates will be needed in the proof of the main theorem, which we now derive.

\begin{lemma}\label{lem:nonlinear bdf}
	Let $\bff{u}_h^n$ be given by Algorithm~\ref{alg:bdf}. Let $\bff{\theta}^n$ and $\bff{\rho}^n$ be given by \eqref{equ:split un}. Furthermore, let the functions $\mathcal{F}$ and $g$ be given by \eqref{equ:F u} and \eqref{equ:g u}, respectively. Then
	\begin{align}
		\label{equ:g uhn F cn}
		\norm{\frac{g(\overline{\bff{u}}_h^{n-1})}{\sqrt{\mathcal{F}[\overline{\bff{u}}_h^{n-1}]}} - \frac{g(\overline{\bff{u}}^{n-1})}{\sqrt{\mathcal{F}[\overline{\bff{u}}^{n-1}]}}}{\bb{L}^2}
		&\leq
		C \norm{{\bff{\theta}}^{n-1}}{\bb{L}^6}
		+
		C \norm{{\bff{\theta}}^{n-2}}{\bb{L}^6}
		+
		C \norm{\overline{\bff{\rho}}^{n-1}}{\bb{L}^6},
		\\[1ex]
		\label{equ:g un F cn}
		\norm{\frac{g(\bff{u}^n)}{\sqrt{\mathcal{F}[\bff{u}^n]}} - \frac{g(\overline{\bff{u}}^{n-1})}{\sqrt{\mathcal{F}[\overline{\bff{u}}^{n-1}]}}}{\bb{L}^2}
		&\leq
		Ck^2,
	\end{align}
	where $C$ is a constant {depending on $T$ and the exact solution $\bff{u}$}, but is independent of $n$, $h$, and $k$.
\end{lemma}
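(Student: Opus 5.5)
We follow the proof of Lemma~\ref{lem:nonlinear euler} almost verbatim, with $\bff{u}_h^{n-1}$ and $\bff{u}^{n-1}$ replaced by the extrapolations $\overline{\bff{u}}_h^{n-1}$ and $\overline{\bff{u}}^{n-1}=2\bff{u}^{n-1}-\bff{u}^{n-2}$. The identity $\overline{\bff{u}}_h^{n-1}-\overline{\bff{u}}^{n-1} = 2\bff{\theta}^{n-1}-\bff{\theta}^{n-2}+\overline{\bff{\rho}}^{n-1}$ will produce the three terms on the right of \eqref{equ:g uhn F cn}.

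For \eqref{equ:g uhn F cn}, we first split the difference exactly as in \eqref{equ:est gf}. The denominators are harmless: $\mathcal{F}[\cdot]\geq \frac14\kappa|\mathscr{D}|$ for any argument, so all factors $1/\sqrt{\mathcal{F}}$ are bounded uniformly. The numerator factor $\|g(\overline{\bff{u}}^{n-1})\|_{\bb{L}^2}$ is bounded by the regularity \eqref{equ:reg u cn}. The left-hand side is then controlled by $C\|g(\overline{\bff{u}}_h^{n-1})-g(\overline{\bff{u}}^{n-1})\|_{\bb{L}^2} + C|\mathcal{F}[\overline{\bff{u}}^{n-1}]-\mathcal{F}[\overline{\bff{u}}_h^{n-1}]|$. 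Next we apply the algebraic factorisations used before \eqref{equ:guhn int} and \eqref{equ:F un1 inter}, followed by H\"older's inequality. This gives a bound $C(\|\overline{\bff{u}}_h^{n-1}\|_{\bb{L}^6}^2+\|\overline{\bff{u}}^{n-1}\|_{\bb{L}^6}^2)\|\overline{\bff{u}}_h^{n-1}-\overline{\bff{u}}^{n-1}\|_{\bb{L}^6}$, plus an $\bb{L}^4$ analogue for the $\mathcal{F}$ difference, which we dominate by the $\bb{L}^6$ norm on the bounded domain. The only place the BDF2 setting enters is the uniform bound on $\|\overline{\bff{u}}_h^{n-1}\|_{\bb{L}^6}$. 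This follows from the triangle inequality, $\|\overline{\bff{u}}_h^{n-1}\|_{\bb{H}^1}\leq 2\|\bff{u}_h^{n-1}\|_{\bb{H}^1}+\|\bff{u}_h^{n-2}\|_{\bb{H}^1}$, together with the stability estimate \eqref{equ:uhn H1 stab cn} and the embedding \eqref{equ:sob embed}. A final triangle inequality on $2\bff{\theta}^{n-1}-\bff{\theta}^{n-2}+\overline{\bff{\rho}}^{n-1}$ yields \eqref{equ:g uhn F cn}.

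For \eqref{equ:g un F cn}, the same argument, now with both arguments exact, gives a bound $C\|\bff{u}^n-\overline{\bff{u}}^{n-1}\|_{\bb{L}^6}$. Here $C$ depends only on $\|\bff{u}\|_{L^\infty_T(\bb{H}^1)}$. The key observation is that $\bff{u}^n-2\bff{u}^{n-1}+\bff{u}^{n-2}$ is a second difference, which we write via Taylor's theorem with integral remainder as
\begin{align*}
	\bff{u}^n-2\bff{u}^{n-1}+\bff{u}^{n-2}=\int_{t_{n-2}}^{t_n}\left(k-\abs{t-t_{n-1}}\right)\partial_t^2\bff{u}(t)\,\dt.
\end{align*}
From this, $\|\bff{u}^n-\overline{\bff{u}}^{n-1}\|_{\bb{L}^6}\leq Ck^2\|\bff{u}\|_{W^{2,\infty}_T(\bb{L}^6)}\leq Ck^2\|\bff{u}\|_{W^{2,\infty}_T(\bb{H}^1)}$, and this norm is finite by \eqref{equ:reg u cn}.

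There is no serious obstacle. The only points needing care are, first, ensuring that the Lipschitz constant for $g/\sqrt{\mathcal{F}}$ stays uniform for the extrapolated discrete argument, which is handled by the $\bb{H}^1$ stability above, and second, identifying that second-order accuracy in \eqref{equ:g un F cn} requires the $W^{2,\infty}_T(\bb{H}^1)$ regularity rather than just $W^{1,\infty}_T(\bb{H}^1)$.
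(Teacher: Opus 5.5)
Your proposal is correct and follows essentially the same route as the paper: the splitting from \eqref{equ:est gf}, the factorisations of $g$ and $\mathcal{F}$ with H\"older's inequality, the $\bb{H}^1$ stability \eqref{equ:uhn H1 stab cn} to bound the extrapolated discrete solution, and a Taylor expansion of $\bff{u}^n-2\bff{u}^{n-1}+\bff{u}^{n-2}$ using the $W^{2,\infty}_T(\bb{H}^1)$ regularity. Your explicit integral-remainder formula and your remark that the $\bb{L}^4$ term is dominated by the $\bb{L}^6$ norm on the bounded domain spell out steps the paper leaves implicit.
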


\begin{proof}
	We first prove \eqref{equ:g uhn F cn}. By the same argument as in \eqref{equ:est gf}, we have
	\begin{align*}
		&\norm{\frac{g(\overline{\bff{u}}_h^{n-1})}{\sqrt{\mathcal{F}[\overline{\bff{u}}_h^{n-1}]}} - \frac{g(\overline{\bff{u}}^{n-1})}{\sqrt{\mathcal{F}[\overline{\bff{u}}^{n-1}]}}}{\bb{L}^2}
		\leq
		C\norm{g(\overline{\bff{u}}_h^{n-1})-g(\overline{\bff{u}}^{n-1})}{\bb{L}^2} + C\abs{\F{\overline{\bff{u}}^{n-1}}- \F{\overline{\bff{u}}_h^{n-1}}}.
	\end{align*}
	The first term on the right-hand side can be estimated as in \eqref{equ:guhn int} to obtain
	\begin{align*}
		\norm{g(\overline{\bff{u}}_h^{n-1})-g(\overline{\bff{u}}^{n-1})}{\bb{L}^2}
		&\leq
		C\left(\norm{\overline{\bff{u}}_h^{n-1}}{\bb{L}^6}^2 + \norm{\overline{\bff{u}}^{n-1}}{\bb{L}^6}^2 \right) \norm{\overline{\bff{\theta}}^{n-1}+ \overline{\bff{\rho}}^{n-1}}{\bb{L}^6}
		\\
		&\leq
		C \norm{\bff{\theta}^{n-1}}{\bb{L}^6}
		+
		C\norm{\bff{\theta}^{n-2}}{\bb{L}^6}
		+
		C \norm{\overline{\bff{\rho}}^{n-1}}{\bb{L}^6}.
	\end{align*}
	By the same argument as in \eqref{equ:guhn int}, we also obtain
	\begin{align*}
		\abs{\F{\overline{\bff{u}}^{n-1}}- \F{\overline{\bff{u}}_h^{n-1}}}
		&\leq
		C\left(\norm{\overline{\bff{u}}_h^{n-1}}{\bb{L}^4}^2 + \norm{\overline{\bff{u}}^{n-1}}{\bb{L}^4}^2 \right) \left(\norm{\overline{\bff{u}}^{n-1}}{\bb{L}^4}+\norm{\overline{\bff{u}}_h^{n-1}}{\bb{L}^4}\right) \norm{\overline{\bff{\theta}}^{n-1}+ \overline{\bff{\rho}}^{n-1}}{\bb{L}^4}
		\\
		&\leq
		C \norm{\bff{\theta}^{n-1}}{\bb{L}^4} + C \norm{\bff{\theta}^{n-2}}{\bb{L}^4} + C \norm{\bff{\overline{\rho}}^{n-1}}{\bb{L}^4}.
	\end{align*}
	Altogether, we infer \eqref{equ:g uhn F cn}.
	
	Next, we show \eqref{equ:g un F cn}. By the same token, we obtain
	\begin{align*}
		\norm{\frac{g(\bff{u}^n)}{\sqrt{\mathcal{F}[\bff{u}^n]}} - \frac{g(\overline{\bff{u}}^{n-1})}{\sqrt{\mathcal{F}[\overline{\bff{u}}^{n-1}]}}}{\bb{L}^2}
		&\leq
		C \norm{\bff{u}^n -\overline{\bff{u}}^{n-1}}{\bb{L}^6}
		\leq
		Ck^2 \norm{\bff{u}}{W^{2,\infty}_T(\bb{H}^1)},
	\end{align*}
	where in the last step we used Taylor's theorem. Noting the regularity of $\bff{u}$, we deduce \eqref{equ:g un F cn}, thus completing the proof of the lemma.
\end{proof}

\begin{lemma}
	Let $\bff{u}_h^n$ be defined by scheme \eqref{equ:fem cn}. Let $\bff{\theta}^n$ and $\bff{\rho}^n$ be given by \eqref{equ:split un cn}, and let $\bff{\xi}^n$ and $\bff{\eta}^n$ be given by \eqref{equ:split Hn cn}. Then
	\begin{align}\label{equ:dt theta L2 cn}
		\norm{\mathcal{D}_t \bff{\theta}^n}{\bb{L}^2}
		&\leq
		C\left(1+\norm{\overline{\bff{u}}_h^{n-1}}{\bb{L}^\infty}\right) \norm{\bff{\xi}^n}{\bb{L}^2}
		+
		C\norm{\overline{\bff{\theta}}^{n-1}}{\bb{L}^2}
		+
		C\left(1+\norm{\overline{\bff{u}}_h^{n-1}}{\bb{L}^\infty}\right) h^2 + Ck^2,
	\end{align}
where $C$ is a constant {depending on $T$ and the exact solution $\bff{u}$}, but is independent of $n$, $h$, and $k$.
%		\\
%		\label{equ:dt theta cn L32}
%		\norm{\Dtt \bff{\theta}^n}{\bb{L}^{3/2}}
%		&\leq
%		Ck^2 + Ch^2 + C \norm{\bff{\xi}^n}{\bb{L}^2}
%		+
%		C\norm{\overline{\bff{\theta}}^{n-1}}{\bb{L}^2}.
		
%		Consequently, we also have
%		\begin{align}\label{equ:dt uh L2 cn}
%		\norm{\Dtt \bff{u}_h^n}{\bb{L}^2}
%		&\leq
%		C\left(1+\norm{\bff{u}_h^{n-1}}{\bb{L}^\infty}\right) \norm{\bff{\xi}^n}{\bb{L}^2}
%		+
%		C\norm{\bff{\theta}^{n-1}}{\bb{L}^2}
%		+
%		C\left(1+\norm{\bff{u}_h^{n-1}}{\bb{L}^\infty}\right) h^2 
%		\nonumber\\
%		&\quad
%		+ Ck
%		+ \norm{u}{W^{2,\infty}_T(\bb{L}^2)},
%		\end{align}
\end{lemma}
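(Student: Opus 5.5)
The plan follows the proof of \eqref{equ:dt theta L2} for the Euler scheme almost line by line, with the backward difference $\dtt$ replaced by the BDF2 operator $\Dtt$ and $\bff{u}_h^{n-1}$ replaced by the extrapolation $\overline{\bff{u}}_h^{n-1}$.

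\textbf{Step 1 (error equation).} Subtract the weak form of \eqref{equ:llb sav eq1} at $t=t_n$ from the first equation in \eqref{equ:fem cn}, using the splittings \eqref{equ:split un cn}--\eqref{equ:split Hn cn}. Write $\overline{\bff{u}}_h^{n-1}=\overline{\bff{\theta}}^{n-1}+\overline{\bff{\rho}}^{n-1}+\overline{\bff{u}}^{n-1}$. This gives, for all $\bff{\phi}\in\bb{V}_h$,
\begin{align*}
\inpro{\Dtt\bff{\theta}^n+\Dtt\bff{\rho}^n+\Dtt\bff{u}^n-\partial_t\bff{u}^n}{\bff{\phi}}
&=-\gamma\inpro{\overline{\bff{u}}_h^{n-1}\times(\bff{\xi}^n+\bff{\eta}^n)}{\bff{\phi}}
-\gamma\inpro{(\overline{\bff{\theta}}^{n-1}+\overline{\bff{\rho}}^{n-1})\times\bff{H}^n}{\bff{\phi}}
\\
&\quad-\gamma\inpro{(\overline{\bff{u}}^{n-1}-\bff{u}^n)\times\bff{H}^n}{\bff{\phi}}
+\alpha\inpro{\bff{\xi}^n+\bff{\eta}^n}{\bff{\phi}}.
\end{align*}
Since $\Dtt\bff{\theta}^n$ and $\bff{\xi}^n$ lie in $\bb{V}_h$, this is equivalent to an explicit identity for $\Dtt\bff{\theta}^n$ as $\Pi_h$ applied to the remaining terms, exactly as in the Euler case.

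\textbf{Step 2 (estimate each term).} Use the $\bb{L}^2$-stability of $\Pi_h$ and H\"older's inequality, then bound the terms as follows.
\begin{itemize}
\item $\norm{\Dtt\bff{u}^n-\partial_t\bff{u}^n}{\bb{L}^2}\le Ck^2$ by \eqref{equ:dtv part t v cn} and the $W^{3,\infty}_T(\bb{L}^2)$ regularity in \eqref{equ:reg u cn}.
\item $\norm{\Dtt\bff{\rho}^n}{\bb{L}^2}\le Ch^2$ by \eqref{equ:Dt bdf rhon Lp}.
\item $\norm{\overline{\bff{u}}_h^{n-1}}{\bb{L}^\infty}\norm{\bff{\xi}^n+\bff{\eta}^n}{\bb{L}^2}\le C\norm{\overline{\bff{u}}_h^{n-1}}{\bb{L}^\infty}(\norm{\bff{\xi}^n}{\bb{L}^2}+h^2)$ by \eqref{equ:Ritz ineq}; this uses $\bff{H}\in L^\infty_T(\bb{H}^2)$.
\item $\norm{\overline{\bff{\theta}}^{n-1}+\overline{\bff{\rho}}^{n-1}}{\bb{L}^2}\norm{\bff{H}^n}{\bb{L}^\infty}\le C\norm{\overline{\bff{\theta}}^{n-1}}{\bb{L}^2}+Ch^2$.
\item $\norm{\overline{\bff{u}}^{n-1}-\bff{u}^n}{\bb{L}^2}\norm{\bff{H}^n}{\bb{L}^\infty}\le Ck^2$. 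Here Taylor's theorem gives $\bff{u}^n-2\bff{u}^{n-1}+\bff{u}^{n-2}=O(k^2\norm{\bff{u}}{W^{2,\infty}_T(\bb{L}^2)})$, as in the proof of \eqref{equ:g un F cn}.
\item The $\alpha$ terms contribute $C\norm{\bff{\xi}^n}{\bb{L}^2}+Ch^2$.
\end{itemize}
Collecting these bounds gives \eqref{equ:dt theta L2 cn}.

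\textbf{Main obstacle.} The estimate tacitly uses $\bff{H}^n\in\bb{L}^\infty$ and $\bff{H}\in L^\infty_T(\bb{H}^2)$ with bounds uniform in $n$. The latter is needed for $\norm{\bff{\eta}^n}{\bb{L}^2}\le Ch^2$. I would take both from the regularity assumption \eqref{equ:reg u cn}, together with the equation, in the same way the Euler proof used \eqref{equ:reg u euler}.

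The step with genuine content is the extrapolation error $\overline{\bff{u}}^{n-1}-\bff{u}^n$. This is the one place where the first-order term $Ck$ of \eqref{equ:dt theta L2} improves to $Ck^2$, and it requires the second time derivative of $\bff{u}$ to be bounded in $\bb{L}^2$.

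Finally, the case $n=2$ involves $\bff{u}_h^0$ and $\bff{u}_h^1$. It needs no separate argument: the identity holds for every $n\ge2$, and all the quantities it uses are defined.
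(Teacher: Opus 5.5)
Your proposal is correct and follows the paper's proof essentially verbatim: the same error equation \eqref{equ:uhn min un cn}, the same explicit representation of $\Dtt\bff{\theta}^n$ through $\Pi_h$, and the same term-by-term bounds. These bounds use the $\bb{L}^2$-stability of $\Pi_h$, H\"older's inequality, \eqref{equ:dtv part t v cn}, \eqref{equ:Ritz ineq}, and the regularity of $\bff{u}$.

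Your sign $(\overline{\bff{u}}^{n-1}-\bff{u}^n)$ in the extrapolation term is the correct one; the paper's sign is flipped, which is harmless under norms. The use of $\bff{H}\in L^\infty_T(\bb{H}^2)$ for $\norm{\bff{\eta}^n}{\bb{L}^2}\le Ch^2$ that you flag is equally tacit in the paper. One small correction: the extrapolation error is not the only place where $Ck$ improves to $Ck^2$, since the BDF2 consistency error $\Dtt\bff{u}^n-\partial_t\bff{u}^n$ improves as well (as your own Step 2 already reflects).
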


\begin{proof}
	Subtracting the weak formulation of \eqref{equ:llb sav eq1} from the corresponding equation in scheme \eqref{equ:fem cn}, we obtain
	\begin{align}
		\label{equ:uhn min un cn}
		&\inpro{\Dtt \bff{\theta}^n + \Dtt \bff{\rho}^n + \Dtt \bff{u}^n - \partial_t \bff{u}^n}{\bff{\phi}}
		\nonumber \\
		&=
		-\gamma \inpro{\overline{\bff{u}}_h^{n-1} \times (\bff{\xi}^n +\bff{\eta}^n)}{\bff{\phi}}
		-
		\gamma \inpro{(\overline{\bff{\theta}}^{n-1}+ \overline{\bff{\rho}}^{n-1})\times \bff{H}^n}{\bff{\phi}}
		\nonumber\\
		&\quad
		-
		\gamma \inpro{(\bff{u}^n-\overline{\bff{u}}^{n-1})\times \bff{H}^n}{\bff{\phi}}
		+
		\alpha \inpro{\bff{\xi}^n +\bff{\eta}^n}{\bff{\phi}}, \quad \forall \bff{\phi}\in \bb{V}_h.
	\end{align} 
	Therefore, we have
	\begin{align*}
		\Dtt \bff{\theta}^n
		&=
		\Pi_h \big[\partial_t \bff{u}^n - \Dtt \bff{u}^n\big] 
		-
		\Pi_h \dtt \bff{\rho}^n
		-
		\gamma \Pi_h \big[\overline{\bff{u}}_h^{n-1} \times (\bff{\xi}^n +\bff{\eta}^n)\big]
		\\
		&\quad
		-
		\gamma \Pi_h\big[(\overline{\bff{\theta}}^{n-1}+ \overline{\bff{\rho}}^{n-1})\times \bff{H}^n\big] 
		-
		\gamma \Pi_h \big[(\bff{u}^n-\overline{\bff{u}}^{n-1})\times \bff{H}^n\big] 
		+
		\alpha \bff{\xi}^n
		+
		\alpha \Pi_h \bff{\eta}^n,
	\end{align*}
	where $\Pi_h$ is the orthogonal projection defined by \eqref{equ:orth proj}.
	Noting the $\bb{L}^2$ stability of $\Pi_h$, we have by H\"older's inequality,
	\begin{align*}
		\norm{\Dtt \bff{\theta}^n}{\bb{L}^2}
		&\leq
		C\norm{\partial_t \bff{u}^n - \Dtt \bff{u}^n}{\bb{L}^2}
		+
		C\norm{\Dtt \bff{\rho}^n}{\bb{L}^2}
		+
		C\norm{\overline{\bff{u}}_h^{n-1}}{\bb{L}^\infty} \norm{\bff{\xi}^n +\bff{\eta}^n}{\bb{L}^2}
		\\
		&\quad
		+
		C\norm{\overline{\bff{\theta}}^{n-1}+ \overline{\bff{\rho}}^{n-1}}{\bb{L}^2} \norm{\bff{H}^n}{\bb{L}^\infty}
		+
		C\norm{\bff{u}^n-\overline{\bff{u}}^{n-1}}{\bb{L}^2} \norm{\bff{H}^n}{\bb{L}^\infty}
		+
		C\norm{\bff{\xi}^n}{\bb{L}^2}
		+
		C\norm{\bff{\eta}^n}{\bb{L}^2}
		\\
		&\leq
		Ck^2 + Ch^2 + C \norm{\overline{\bff{u}}_h^{n-1}}{\bb{L}^\infty} \norm{\bff{\xi}^n}{\bb{L}^2}
		+
		Ch^2 \norm{\overline{\bff{u}}_h^{n-1}}{\bb{L}^\infty}
		+
		C\norm{\overline{\bff{\theta}}^{n-1}}{\bb{L}^2}
		+
		C\norm{\bff{\xi}^n}{\bb{L}^2},
	\end{align*}
	where in the last step we used \eqref{equ:dtv part t v cn}, \eqref{equ:Ritz ineq}, and the regularity of the solution assumed in \eqref{equ:reg u cn}. This implies \eqref{equ:dt theta L2 cn}, completing the proof of the lemma.
\end{proof}
	
%	Similarly, we also have
%	\begin{align*}
%		\norm{\Dtt \bff{\theta}^n}{\bb{L}^{3/2}}
%		&\leq
%		C\norm{\partial_t \bff{u}^n - \Dtt \bff{u}^n}{\bb{L}^{3/2}}
%		+
%		C\norm{\Dtt \bff{\rho}^n}{\bb{L}^{3/2}}
%		+
%		C\norm{\overline{\bff{u}}_h^{n-1}}{\bb{L}^6} \norm{\bff{\xi}^n +\bff{\eta}^n}{\bb{L}^2}
%		\\
%		&\quad
%		+
%		C\norm{\overline{\bff{\theta}}^{n-1}+ \overline{\bff{\rho}}^{n-1}}{\bb{L}^2} \norm{\bff{H}^n}{\bb{L}^6}
%		+
%		C\norm{\bff{u}^n-\overline{\bff{u}}^{n-1}}{\bb{L}^2} \norm{\bff{H}^n}{\bb{L}^6}
%		+
%		C\norm{\bff{\xi}^n}{\bb{L}^2}
%		+
%		C\norm{\bff{\eta}^n}{\bb{L}^2}
%		\\
%		&\leq
%		Ck^2 + Ch^2 + C \norm{\bff{\xi}^n}{\bb{L}^2}
%		+
%		C\norm{\overline{\bff{\theta}}^{n-1}}{\bb{L}^2},
%	\end{align*}
%	proving \eqref{equ:dt theta cn L32}.
	
%	Finally, inequality \eqref{equ:dt uh L2 cn} follows by writing $\Dtt \bff{u}_h^n= \Dtt \bff{\theta}^n+ \Dtt \bff{\rho}^n + \Dtt \bff{u}^n$, then applying \eqref{equ:Dt bdf rhon Lp}, \eqref{equ:Dt bdf vn Lp}, \eqref{equ:dt theta L2 cn}, and the triangle inequality. This completes the proof of the lemma.

The following proposition establishes an auxiliary superconvergence estimate.

\begin{proposition}
	Let $\bff{\theta}^n, \bff{\xi}^n$, and $e^n$ be defined by \eqref{equ:split un cn}, \eqref{equ:split Hn cn}, and \eqref{equ:split rn cn}, respectively. For sufficiently small $h,k>0$, we have 
	\begin{align}\label{equ:error theta n cn}
		\norm{\bff{\theta}^n}{\bb{H}^1}^2 + \abs{e^n}^2 
		\leq C(h^4+k^4).
	\end{align}
	Moreover, if $\mathscr{D}\subset \bb{R}^2$, then
	\begin{align}\label{equ:error theta Linfty cn}
		\norm{\bff{\theta}^n}{\bb{L}^\infty}^2
		\leq
		C(h^4+k^4)\abs{\ln h}.
	\end{align}
	Here, $C$ is a constant {depending on $T$ and the exact solution $\bff{u}$}, but is independent of $n$, $h$, and $k$.
\end{proposition}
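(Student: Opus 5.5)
The proof will follow the structure of the proof of \eqref{equ:error theta n eul}, with the Euler difference replaced by the BDF2 operator $\Dtt$ and the identity \eqref{equ:a dot ab} replaced by \eqref{equ:a abc}. There are three error equations to work with:
\begin{itemize}
\item[(i)] the equation \eqref{equ:uhn min un cn} for $\Dtt\bff{\theta}^n$;
\item[(ii)] the analogue of \eqref{equ:Hhn min Hn}, obtained by subtracting \eqref{equ:llb sav eq2} at $t_n$ from \eqref{equ:fem cn Hhn} and using \eqref{equ:Ritz zero}; the nonlinear terms are split as
\[
r_h^n\frac{g(\overline{\bff{u}}_h^{n-1})}{\sqrt{\F{\overline{\bff{u}}_h^{n-1}}}}-r^n\frac{g(\bff{u}^n)}{\sqrt{\F{\bff{u}^n}}}
= e^n\frac{g(\overline{\bff{u}}^{n-1})}{\sqrt{\F{\overline{\bff{u}}^{n-1}}}}+r_h^n\Big(\tfrac{g(\overline{\bff{u}}_h^{n-1})}{\sqrt{\F{\overline{\bff{u}}_h^{n-1}}}}-\tfrac{g(\overline{\bff{u}}^{n-1})}{\sqrt{\F{\overline{\bff{u}}^{n-1}}}}\Big)-r^n\Big(\tfrac{g(\bff{u}^n)}{\sqrt{\F{\bff{u}^n}}}-\tfrac{g(\overline{\bff{u}}^{n-1})}{\sqrt{\F{\overline{\bff{u}}^{n-1}}}}\Big);
\]
\item[(iii)] the equation for $\Dtt e^n$ obtained from the second line of \eqref{equ:fem cn} minus \eqref{equ:llb sav eq3}, with an analogous splitting into a leading term $\frac{1}{2\sqrt{\F{\overline{\bff{u}}_h^{n-1}}}}\inpro{g(\overline{\bff{u}}_h^{n-1})}{\Dtt\bff{\theta}^n+\Dtt\bff{\rho}^n}$ plus consistency remainders.
\end{itemize}

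I will test (i) with $\bff{\phi}=\bff{\xi}^n$, (ii) with $\bff{\chi}=\Dtt\bff{\theta}^n$, and multiply (iii) by $2e^n$, then add the three. Two cancellations occur. The cross-product term $\inpro{\overline{\bff{u}}_h^{n-1}\times\bff{\xi}^n}{\bff{\xi}^n}$ vanishes. The leading $e^n$-coupling terms $\pm \frac{e^n}{\sqrt{\mathcal F}}\inpro{g}{\Dtt\bff\theta^n}$ cancel exactly as in the Euler case, with the remainder $e^n\inpro{\frac{g(\overline{\bff{u}}_h^{n-1})}{\sqrt{\mathcal{F}}}-\frac{g(\overline{\bff{u}}^{n-1})}{\sqrt{\mathcal{F}}}}{\Dtt\bff\theta^n}$ left on the right-hand side. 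Multiplying by $4k$ and applying \eqref{equ:a abc} to $\nabla\bff\theta$, $\bff\theta$ and $e$ gives a telescoping left-hand side
\[
\mathcal{G}^n-\mathcal{G}^{n-1}+4\alpha k\norm{\bff\xi^n}{\bb{L}^2}^2+(\text{nonnegative second differences}),
\]
where
\[
\mathcal{G}^n:=\norm{\nabla\bff\theta^n}{\bb{L}^2}^2+\norm{2\nabla\bff\theta^n-\nabla\bff\theta^{n-1}}{\bb{L}^2}^2+\kappa(\cdots)+2\abs{e^n}^2+2\abs{2e^n-e^{n-1}}^2 .
\]
This $\mathcal{G}^n$ dominates $\norm{\bff\theta^n}{\bb{H}^1}^2+\abs{e^n}^2$.

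The right-hand side terms $I_j$ will be bounded term by term exactly as in the Euler proof. The inputs are:
\begin{itemize}
\item \eqref{equ:dtv part t v cn}, which gives $k^2$ in place of $k$;
\item \eqref{equ:Dt bdf rhon Lp};
\item \eqref{equ:g uhn F cn}, whose $\bb{L}^6$ norms are controlled via $\norm{\bff\theta^{n-1}}{\bb{H}^1}$, $\norm{\bff\theta^{n-2}}{\bb{H}^1}$ and $h^2$;
\item \eqref{equ:g un F cn}, which gives $k^2$;
\item $\norm{\bff{u}^n-\overline{\bff{u}}^{n-1}}{\bb{L}^2}\le Ck^2$ from Taylor's theorem;
\item the bound \eqref{equ:dt theta L2 cn} wherever $\Dtt\bff\theta^n$ appears against $\bff\eta^n$, $\bff\rho^n$ or the nonlinear differences.
\end{itemize}
Every occurrence of $\norm{\bff\xi^n}{\bb{L}^2}$ is absorbed with a small fraction of $\alpha$. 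The leftover terms are $C(1+\norm{\overline{\bff u}_h^{n-1}}{\bb{L}^\infty}^2)\big(h^4+k^4+\norm{\bff\theta^{n-1}}{\bb{H}^1}^2+\norm{\bff\theta^{n-2}}{\bb{H}^1}^2\big)+C\abs{e^n}^2+C\abs{e^{n-1}}^2$.

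Summing from $j=2$ to $n$, absorbing $Ck\abs{e^n}^2$ for small $k$, and using the initialisation \eqref{equ:cn init} to bound $\mathcal{G}^1\le C(h^4+k^4)$ (with $\bff\theta^0=0$ since $\bff u_h^0=R_h\bff u_0$) leaves a discrete Gronwall argument (Lemma~\ref{lem:disc gron}). Its weights are $\gamma_j=C(1+\norm{\bff u_h^j}{\bb{L}^\infty}^2+\norm{\bff u_h^{j-1}}{\bb{L}^\infty}^2)$, since $\norm{\overline{\bff u}_h^{j}}{\bb{L}^\infty}\le 2\norm{\bff u_h^j}{\bb{L}^\infty}+\norm{\bff u_h^{j-1}}{\bb{L}^\infty}$. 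By \eqref{equ:C infty cn}, $k\sum_j\gamma_j\le C$, which yields \eqref{equ:error theta n cn}. Estimate \eqref{equ:error theta Linfty cn} then follows from \eqref{equ:disc sob 2d}.

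The main obstacle is the bookkeeping with the $\bb{L}^\infty$ weights. The products $\norm{\overline{\bff u}_h^{n-1}}{\bb{L}^\infty}\norm{\bff\xi^n}{\bb{L}^2}\cdot(\text{error})$ coming from \eqref{equ:dt theta L2 cn} must be split by Young's inequality so that $\bff\xi^n$ carries only a constant coefficient, and the factor $\norm{\overline{\bff u}_h^{n-1}}{\bb{L}^\infty}^2$ lands on quantities at earlier time levels. Since these weights are only summable and not uniformly bounded, they must never multiply a current-level quantity. The $\bff\xi$ terms arising in $I_8$ and $I_{10}$ would otherwise carry the factor $(1+\norm{\overline{\bff u}_h^{n-1}}{\bb{L}^\infty}^2)$; I will check this carefully, and if needed pair $\norm{\bff\xi^n}{\bb{L}^2}$ directly with a nonlinear difference bounded by $C(\norm{\bff\theta^{n-1}}{\bb{H}^1}+\norm{\bff\theta^{n-2}}{\bb{H}^1}+h^2)$ without weights.
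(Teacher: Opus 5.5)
Your proposal is correct and takes essentially the same approach as the paper: you test with $\bff{\xi}^n$, $\Dtt\bff{\theta}^n$ and $2e^n$, telescope via \eqref{equ:a abc}, and bound the remainder terms using \eqref{equ:dt theta L2 cn} with Young splits that put the $\bb{L}^\infty$ weights only on earlier-level errors, then sum from $j=2$ and close with the discrete Gronwall lemma and \eqref{equ:C infty cn}; this weight placement is exactly how the paper handles $J_8$ and $J_{10}$, using that $\abs{r_h^n}$ and $\abs{e^n}$ are uniformly bounded by \eqref{equ:uhn H1 stab cn}. One detail to make explicit is that your starting quantity $\mathcal{G}^1$ also contains $e^0=r_h^0-r(0)$, which is not covered by \eqref{equ:cn init} but is $O(h^2)$ when $\bff{u}_h^0=R_h\bff{u}_0$ and $r_h^0=\sqrt{\mathcal{F}[\bff{u}_h^0]}$.
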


\begin{proof}
	Firstly, recall that by subtracting the weak formulation of \eqref{equ:llb sav eq1} at time $t=t_n$ from the corresponding equation in scheme \eqref{equ:fem cn}, we have \eqref{equ:uhn min un cn}.
	In a similar manner, noting \eqref{equ:Ritz zero}, we have
	\begin{align}
		\label{equ:Hhn min Hn cn}
		\inpro{\bff{\xi}^n +\bff{\eta}^n}{\bff{\chi}}
		&=
		- \inpro{\nabla \bff{\theta}^n}{\nabla \bff{\chi}}
		-
		\kappa \inpro{\bff{\theta}^n +\bff{\rho}^n}{\bff{\chi}}
		\nonumber\\
		&\quad
		-
		\frac{e^n}{\sqrt{\mathcal{F}[\overline{\bff{u}}^{n-1}]}} \inpro{g(\overline{\bff{u}}^{n-1})}{\bff{\chi}}
		-
		r_h^n \inpro{\frac{g(\overline{\bff{u}}_h^{n-1})}{\sqrt{\mathcal{F}[\overline{\bff{u}}_h^{n-1}]}}- \frac{g(\overline{\bff{u}}^{n-1})}{\sqrt{\mathcal{F}[\overline{\bff{u}}^{n-1}]}}}{\bff{\chi}}
		\nonumber\\
		&\quad
		+
		r^n \inpro{\frac{g(\bff{u}^n)}{\sqrt{\mathcal{F}[\bff{u}^n]}} - \frac{g(\overline{\bff{u}}^{n-1})}{\sqrt{\mathcal{F}[\overline{\bff{u}}^{n-1}]}}}{\bff{\chi}}, \quad \forall \bff{\chi}\in \bb{V}_h.
	\end{align}
	Next, subtracting \eqref{equ:llb sav eq3} from the second equation in \eqref{equ:fem cn}, we obtain
	\begin{align}\label{equ:rhn min rn cn}
		&\Dtt e^n + \Dtt r^n - \partial_t r^n 
		\nonumber\\
		&=
		\frac{1}{2\sqrt{\mathcal{F}[\overline{\bff{u}}_h^{n-1}]}} \inpro{g(\overline{\bff{u}}_h^{n-1})}{\Dtt \bff{\theta}^n+ \Dtt \bff{\rho}^n}
		+
		\inpro{\frac{g(\overline{\bff{u}}_h^{n-1})}{2\sqrt{\mathcal{F}[\overline{\bff{u}}_h^{n-1}]}} - \frac{g(\overline{\bff{u}}^{n-1})}{2\sqrt{\mathcal{F}[\overline{\bff{u}}^{n-1}]}}}{\Dtt \bff{u}^n}
		\nonumber\\
		&\quad
		+
		\inpro{\frac{g(\overline{\bff{u}}^{n-1})}{2\sqrt{\mathcal{F}[\overline{\bff{u}}^{n-1}]}} - \frac{g(\bff{u}^n)}{2\sqrt{\mathcal{F}[\bff{u}^n]}}}{\Dtt \bff{u}^n}
		+
		\inpro{\frac{g(\bff{u}^n)}{2\sqrt{\mathcal{F}[\bff{u}^n]}}}{\Dtt \bff{u}^n-\partial_t \bff{u}^n}.
	\end{align}
	Now, set $\bff{\phi}= \bff{\xi}^n$ in \eqref{equ:uhn min un cn} and $\bff{\chi}= \Dtt \bff{\theta}^n$ in \eqref{equ:Hhn min Hn cn}, and multiply \eqref{equ:rhn min rn cn} by $2e^n$. We then add the resulting equations to obtain
	\begin{align}\label{equ:ineq theta e cn}
		&\frac{1}{2k} \left(\norm{\nabla \bff{\theta}^n}{\bb{L}^2}^2 - \norm{\nabla \bff{\theta}^{n-1}}{\bb{L}^2}^2 \right) 
		+
		\frac{\kappa}{2k} \left(\norm{\bff{\theta}^n}{\bb{L}^2}^2 - \norm{\bff{\theta}^{n-1}}{\bb{L}^2}^2 \right)
		+
		\frac{1}{k}  \left(\abs{e^n}^2- \abs{e^{n-1}}^2\right)
		+
		\alpha \norm{\bff{\xi}^n}{\bb{L}^2}^2
		\nonumber\\
		&\quad
		+
		\frac{1}{2k} \left(\norm{2\nabla \bff{\theta}^n- \nabla \bff{\theta}^{n-1}}{\bb{L}^2}^2 - \norm{2\nabla \bff{\theta}^{n-1}- \nabla \bff{\theta}^{n-2}}{\bb{L}^2}^2 \right)
		\nonumber\\
		&\quad
		+
		\frac{\kappa}{2k} \left(\norm{2\bff{\theta}^n-  \bff{\theta}^{n-1}}{\bb{L}^2}^2 - \norm{2 \bff{\theta}^{n-1}- \bff{\theta}^{n-2}}{\bb{L}^2}^2 \right)
		\nonumber\\
		&\quad
		+
		\frac{1}{k} \left(\abs{2e^n-e^{n-1}}^2 - \abs{2e^{n-1}- e^{n-2}}^2 \right) 
		+
		\frac{1}{2k} \norm{\nabla \bff{\theta}^n- 2\nabla \bff{\theta}^{n-1}+ \nabla \bff{\theta}^{n-2}}{\bb{L}^2}^2
		\nonumber\\
		&\quad
		+
		\frac{\kappa}{2k} \norm{\bff{\theta}^n- 2 \bff{\theta}^{n-1}+ \bff{\theta}^{n-2}}{\bb{L}^2}^2
		+
		\frac{1}{k} \abs{e^n-2e^{n-1}+e^{n-2}}^2
		\nonumber\\
		&=
		\inpro{\Dtt \bff{\rho}^n+\Dtt \bff{u}^n-\partial_t \bff{u}^n}{\bff{\xi}^n}
		-
		\inpro{\bff{\eta}^n}{\Dtt \bff{\theta}^n}
		+
		\gamma \inpro{\overline{\bff{u}}_h^{n-1}\times \bff{\eta}^n}{\bff{\xi}^n}
		\nonumber\\
		&\quad
		+
		\gamma \inpro{(\overline{\bff{\theta}}^{n-1}+\overline{\bff{\rho}}^{n-1})\times \bff{H}^n}{\bff{\xi}^n}
		+
		\gamma \inpro{(\bff{u}^n-\overline{\bff{u}}^{n-1})\times \bff{H}^n}{\bff{\xi}^n}
		-
		\alpha \inpro{\bff{\eta}^n}{\bff{\xi}^n}
		-
		\kappa \inpro{\bff{\rho}^n}{\Dtt \bff{\theta}^n}
		\nonumber\\
		&\quad
		-
		r_h^n \inpro{\frac{g(\overline{\bff{u}}_h^{n-1})}{\sqrt{\mathcal{F}[\overline{\bff{u}}_h^{n-1}]}}- \frac{g(\overline{\bff{u}}^{n-1})}{\sqrt{\mathcal{F}[\overline{\bff{u}}^{n-1}]}}}{\Dtt \bff{\theta}^n}
		+
		r^n \inpro{\frac{g(\bff{u}^n)}{\sqrt{\mathcal{F}[\bff{u}^n]}} - \frac{g(\overline{\bff{u}}^{n-1})}{\sqrt{\mathcal{F}[\overline{\bff{u}}^{n-1}]}}}{\Dtt \bff{\theta}^n}
		\nonumber\\
		&\quad
		+
		e^n \inpro{\frac{g(\overline{\bff{u}}_h^{n-1})}{\sqrt{\mathcal{F}[\overline{\bff{u}}_h^{n-1}]}}- \frac{g(\overline{\bff{u}}^{n-1})}{\sqrt{\mathcal{F}[\overline{\bff{u}}^{n-1}]}}}{\Dtt \bff{\theta}^n}
		+
		\frac{e^n}{\sqrt{\mathcal{F}[\overline{\bff{u}}_h^{n-1}]}} \inpro{g(\overline{\bff{u}}_h^{n-1})}{\Dtt \bff{\rho}^n}
		\nonumber\\
		&\quad
		+
		e^n
		\inpro{\frac{g(\overline{\bff{u}}_h^{n-1})}{\sqrt{\mathcal{F}[\overline{\bff{u}}_h^{n-1}]}} - \frac{g(\overline{\bff{u}}^{n-1})}{\sqrt{\mathcal{F}[\overline{\bff{u}}^{n-1}]}}}{\Dtt \bff{u}^n}
		+
		e^n
		\inpro{\frac{g(\overline{\bff{u}}^{n-1})}{\sqrt{\mathcal{F}[\overline{\bff{u}}^{n-1}]}} - \frac{g(\bff{u}^n)}{\sqrt{\mathcal{F}[\bff{u}^n]}}}{\Dtt \bff{u}^n}
		\nonumber\\
		&\quad
		+
		e^n \inpro{\frac{g(\bff{u}^n)}{\sqrt{\mathcal{F}[\bff{u}^n]}}}{\Dtt \bff{u}^n-\partial_t \bff{u}^n}
		\nonumber\\
		&=: J_1+J_2+\ldots+J_{14}.
	\end{align}
	We shall estimate each term on the last line. For the first term, by Young's inequality, \eqref{equ:Ritz ineq}, and \eqref{equ:dtv part t v cn}, we have
	\begin{align*}
		\abs{J_1}
		&\leq
		Ch^4 + Ck^4 + \frac{\alpha}{20} \norm{\bff{\xi}^n}{\bb{L}^2}^2.
	\end{align*}
	Next, noting \eqref{equ:dt theta L2 cn}, we have for the second term,
	\begin{align*}
		\abs{J_2}
		&\leq C\norm{\overline{\bff{\theta}}^{n-1}}{\bb{L}^2}^2
		+
		C\left(1+\norm{\overline{\bff{u}}_h^{n-1}}{\bb{L}^\infty}^2 \right) h^4
		+
		Ck^4
		+
		\frac{\alpha}{20} \norm{\bff{\xi}^n}{\bb{L}^2}^2.
	\end{align*}
	Similarly, for the third term, by \eqref{equ:Ritz ineq} and Young's inequality,
	\begin{align*}
		\abs{J_3}
		&\leq
		Ch^4 \norm{\overline{\bff{u}}_h^{n-1}}{\bb{L}^\infty}^2
		+
		\frac{\alpha}{20} \norm{\bff{\xi}^n}{\bb{L}^2}^2.
	\end{align*}
	For the next three terms, we apply Young's inequality and \eqref{equ:Ritz ineq}, and note the regularity of the exact solution in \eqref{equ:reg u cn} to obtain
	\begin{align*}
		\abs{J_4}
		&\leq
		C\norm{\overline{\bff{\theta}}^{n-1}}{\bb{L}^2}^2
		+
		Ch^4
		+
		\frac{\alpha}{20} \norm{\bff{\xi}^n}{\bb{L}^2}^2,
		\\
		\abs{J_5}
		&\leq
		Ck^4
		+
		\frac{\alpha}{20} \norm{\bff{\xi}^n}{\bb{L}^2}^2,
		\\
		\abs{J_6}
		&\leq
		Ch^4
		+
		\frac{\alpha}{20} \norm{\bff{\xi}^n}{\bb{L}^2}^2.
	\end{align*}
	Next, by Young's inequality and \eqref{equ:dt theta L2 cn},
	\begin{align*}
		\abs{J_7}
		&\leq
		C\norm{\overline{\bff{\theta}}^{n-1}}{\bb{L}^2}^2
		+
		C\left(1+\norm{\overline{\bff{u}}_h^{n-1}}{\bb{L}^\infty}^2 \right) h^4 + Ck^4 + \frac{\alpha}{20} \norm{\bff{\xi}^n}{\bb{L}^2}^2.
	\end{align*}
	For the term $J_8$, by \eqref{equ:g uhn F cn}, \eqref{equ:uhn H1 stab cn}, \eqref{equ:dt theta L2 cn}, as well as H\"older's and Young's inequalities, we have
	\begin{align*}
		\abs{J_8}
		&\leq
		C \abs{r_h^n} \norm{\frac{g(\overline{\bff{u}}_h^{n-1})}{\sqrt{\mathcal{F}[\overline{\bff{u}}_h^{n-1}]}} - \frac{g(\overline{\bff{u}}^{n-1})}{\sqrt{\mathcal{F}[\overline{\bff{u}}^{n-1}]}}}{\bb{L}^2}  \norm{\Dtt \bff{\theta}^n}{\bb{L}^2}
		\\
		&\leq
		C\left( \norm{\bff{\theta}^{n-1}}{\bb{H}^1} + \norm{\bff{\theta}^{n-2}}{\bb{H}^1} + h^2 \right)
		\left[ \left(1+\norm{\overline{\bff{u}}_h^{n-1}}{\bb{L}^\infty}\right) \norm{\bff{\xi}^n}{\bb{L}^2} \right]
		\\
		&\quad
		+ C\left( \norm{\bff{\theta}^{n-1}}{\bb{H}^1} + \norm{\bff{\theta}^{n-2}}{\bb{H}^1} + h^2 \right)
		\left[ \norm{\bff{\theta}^{n-1}}{\bb{L}^2} + \norm{\bff{\theta}^{n-2}}{\bb{L}^2} \right]
		\\
		&\quad
		+ C\left( \norm{\bff{\theta}^{n-1}}{\bb{H}^1} + \norm{\bff{\theta}^{n-2}}{\bb{H}^1} + h^2 \right)
		\left[\left(1+\norm{\overline{\bff{u}}_h^{n-1}}{\bb{L}^\infty}\right) h^2 + Ck^2 \right]
		\\
		&\leq
		C\left(1+\norm{\overline{\bff{u}}_h^{n-1}}{\bb{L}^\infty}^2 \right) \left(\norm{\bff{\theta}^{n-1}}{\bb{H}^1}^2 + \norm{\bff{\theta}^{n-2}}{\bb{H}^1}^2 \right)
		+
		C\left(1+\norm{\overline{\bff{u}}_h^{n-1}}{\bb{L}^\infty}^2 \right) h^4
		+
		Ck^4 
		+
		\frac{\alpha}{20} \norm{\bff{\xi}^n}{\bb{L}^2}^2.
	\end{align*}
	Similarly, applying \eqref{equ:g un F cn} instead, we obtain
	\begin{align*}
		\abs{J_9}
		&\leq
		C\left(1+\norm{\overline{\bff{u}}_h^{n-1}}{\bb{L}^\infty}^2\right) (h^4+k^4)
		+
		C\norm{\overline{\bff{\theta}}^{n-1}}{\bb{L}^2}^2
		+
		\frac{\alpha}{20} \norm{\bff{\xi}^n}{\bb{L}^2}^2.
	\end{align*}
	The term $J_{10}$ can be estimated in the same way as the term $J_8$, yielding the same bound:
	\begin{align*}
		\abs{J_{10}}
		&\leq
		C\left(1+\norm{\overline{\bff{u}}_h^{n-1}}{\bb{L}^\infty}^2 \right) \left(\norm{\bff{\theta}^{n-1}}{\bb{H}^1}^2 + \norm{\bff{\theta}^{n-2}}{\bb{H}^1}^2 \right)
		+
		C\left(1+\norm{\overline{\bff{u}}_h^{n-1}}{\bb{L}^\infty}^2 \right) h^4
		+
		Ck^4 
		+
		\frac{\alpha}{20} \norm{\bff{\xi}^n}{\bb{L}^2}^2.
	\end{align*}
	Next, by Young's inequality and the Sobolev embedding, \eqref{equ:uhn H1 stab cn}, \eqref{equ:Ritz ineq}, and the fact that $\mathcal{F}$ is bounded below, we obtain
	\begin{align*}
		\abs{J_{11}}
		&\leq
		C\norm{\overline{\bff{u}}_h^{n-1}}{\bb{L}^6}^6 \abs{e^n}^2 + Ch^4
		\leq
		C \abs{e^n}^2 + Ch^4.
	\end{align*}
	The terms $J_{12}$ and $J_{13}$ can be bounded by using \eqref{equ:g uhn F cn} and \eqref{equ:g un F cn}, respectively, and applying Young's inequality to yield
	\begin{align*}
		\abs{J_{12}}
		&\leq
		C\norm{\bff{\theta}^{n-1}}{\bb{H}^1}^2
		+
		C\norm{\bff{\theta}^{n-2}}{\bb{H}^1}^2
		+
		C\abs{e^n}^2
		+
		Ch^4,
		\\
		\abs{J_{13}}
		&\leq
		C\abs{e^n}^2
		+
		Ck^4.
	\end{align*}
	Finally, the last term in \eqref{equ:ineq theta e cn} can be bounded using \eqref{equ:dtv part t v cn} and Young's inequality, giving
	\begin{align*}
		\abs{J_{14}}
		&\leq
		C\abs{e^n}^2
		+
		Ck^4.
	\end{align*}
	Altogether, substituting these estimates into \eqref{equ:ineq theta e cn}, rearranging the terms, and summing over $j\in \{2,3,\ldots,n\}$, we obtain for sufficiently small $k>0$,
	\begin{align*}
		&\norm{\bff{\theta}^n}{\bb{H}^1}^2
		+
		\abs{e^n}^2
		+
		\norm{2\bff{\theta}^n-\bff{\theta}^{n-1}}{\bb{H}^1}^2
		+
		\abs{2e^n-e^{n-1}}^2
		\\
		&\leq
		\norm{\bff{\theta}^1}{\bb{H}^1}^2
		+
		\abs{e^1}^2
		+
		\norm{2\bff{\theta}^1-\bff{\theta}^0}{\bb{H}^1}^2
		+
		\abs{2e^1-e^0}^2
		\\
		&\quad
		+
		Ck(h^4+k^4) \sum_{j=2}^n \left(1+\norm{\overline{\bff{u}}_h^{j-1}}{\bb{L}^\infty}^2 \right)
		+
		Ck \sum_{j=2}^n \left(1+\norm{\overline{\bff{u}}_h^{j-1}}{\bb{L}^\infty}^2 \right) \left(\norm{\bff{\theta}^{j-1}}{\bb{H}^1}^2 + \norm{\bff{\theta}^{j-2}}{\bb{H}^1}^2 \right)
		\\
		&\leq
		C(h^4+k^4)
		+
		Ck \sum_{j=1}^{n-1} \left(1+\norm{\bff{u}_h^j}{\bb{L}^\infty}^2 +\norm{\bff{u}_h^{j-1}}{\bb{L}^\infty}^2 \right) \left(\norm{\bff{\theta}^j}{\bb{H}^1}^2 + \norm{\bff{\theta}^{j-1}}{\bb{H}^1}^2 \right),
	\end{align*}
	where in the last step we used \eqref{equ:cn init} and \eqref{equ:C infty cn}. 
	Therefore, by the discrete Gronwall lemma, noting the stability estimate \eqref{equ:C infty cn}, we deduce
	\begin{align*}
		\norm{\bff{\theta}^n}{\bb{H}^1}^2 + \abs{e^n}^2
		\leq
		C(h^4+k^4) \exp\left[Ck\sum_{j=1}^{n-1} \left(1+\norm{\bff{u}_h^j}{\bb{L}^\infty}^2 \right) \right]
		\leq
		C(h^4+k^4),
	\end{align*}
	thus proving \eqref{equ:error theta n cn}. 
	
	Finally, inequality \eqref{equ:error theta Linfty cn} follows from \eqref{equ:error theta n cn} and the discrete Sobolev inequality in 2D~\eqref{equ:disc sob 2d}.
\end{proof}

We can now state the main theorem of this section which establishes optimal-order convergence in $\bb{L}^2$ and $\bb{H}^1$ norms (and in $\bb{L}^\infty$ when $d=2$).

\begin{theorem}\label{the:err cn}
	Let $\bff{u}_h^n$ and $\bff{u}$ be the solution of Algorithm~\ref{alg:bdf} and equation \eqref{equ:llb sav a}, respectively. For sufficiently small $h,k>0$, and $n=1,2,\ldots,\lfloor T/k \rfloor$,
	\begin{align}\label{equ:error Hs cn}
		\norm{\bff{u}_h^n- \bff{u}(t_n)}{\bb{H}^s}
		\leq
		C(h^{2-s}+k^2),\quad s\in \{0,1\}.
	\end{align}
	If $\mathscr{D}\subset \bb{R}^2$, then
	\begin{align}\label{equ:error Linf cn}
		\norm{\bff{u}_h^n- \bff{u}(t_n)}{\bb{L}^\infty}
		\leq
		C(h^2+k^2) \abs{\ln h}^{\frac12}.
	\end{align}
	Furthermore, the energy functional $\widehat{\mathcal{E}}[\bff{u}_h^n]$ is a good approximation of $\mathcal{E}[\bff{u}(t_n)]$ in the sense that
	\begin{align}\label{equ:good ener cn}
		\abs{\widehat{\mathcal{E}}[\bff{u}_h^n]- \mathcal{E}[\bff{u}(t_n)]} \leq C(h+k^2).
	\end{align}
	The constant $C$ in \eqref{equ:error Hs cn}, \eqref{equ:error Linf cn}, and \eqref{equ:good ener cn} {depends on $T$ and the exact solution $\bff{u}$}, but is independent of $n$, $h$, and $k$.
\end{theorem}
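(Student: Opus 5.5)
The plan is to derive everything from the superconvergence estimates \eqref{equ:error theta n cn}--\eqref{equ:error theta Linfty cn} of the preceding proposition. We combine them with the Ritz projection bounds and the splitting \eqref{equ:split un cn}, exactly as in the proof of Theorem~\ref{the:err euler}.

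\emph{Step 1: the $\bb{H}^s$ estimate.} Write $\bff{u}_h^n-\bff{u}(t_n)=\bff{\theta}^n+\bff{\rho}^n$. By \eqref{equ:error theta n cn}, $\norm{\bff{\theta}^n}{\bb{H}^1}\le C(h^2+k^2)$. By \eqref{equ:Ritz ineq} with $p=2$ and the regularity \eqref{equ:reg u cn}, $\norm{\bff{\rho}^n}{\bb{H}^s}\le Ch^{2-s}$. The triangle inequality then gives \eqref{equ:error Hs cn} for $s\in\{0,1\}$; for $s=0$ we simply use $\norm{\bff{\theta}^n}{\bb{L}^2}\le\norm{\bff{\theta}^n}{\bb{H}^1}$. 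The case $n=1$ is covered directly by the initialisation assumption \eqref{equ:cn init}.

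\emph{Step 2: the $\bb{L}^\infty$ estimate for $d=2$.} We combine \eqref{equ:error theta Linfty cn}, which gives $\norm{\bff{\theta}^n}{\bb{L}^\infty}\le C(h^2+k^2)\abs{\ln h}^{1/2}$, with \eqref{equ:Ritz infty ineq}, which gives $\norm{\bff{\rho}^n}{\bb{L}^\infty}\le Ch^2\abs{\ln h}^{1/2}$. The triangle inequality then yields \eqref{equ:error Linf cn}.

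\emph{Step 3: the energy estimate \eqref{equ:good ener cn}.} This is the only step needing care, and it hinges on how the extra terms of $\widehat{\mathcal{E}}$ are read.

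\begin{itemize}
\item If the extra terms are taken literally as $\frac12\norm{2\nabla\bff{u}_h^n-\nabla\bff{u}_h^{n-1}}{\bb{L}^2}^2$ and so on, they approximate the exchange and anisotropy parts of the energy a second time. Then $\widehat{\mathcal{E}}$ would be close to roughly twice the physical energy, so it could not approximate $\mathcal{E}$.
\item I would therefore use the form given in the Remark following \eqref{equ:disc ener cn}. There the additional terms are squared increments: since $\bff{u}_h^{n-\frac12}-\bff{u}_h^{n-1}=\frac12(\bff{u}_h^n-\bff{u}_h^{n-1})$, one has $2\norm{\nabla\bff{u}_h^{n-\frac12}-\nabla\bff{u}_h^{n-1}}{\bb{L}^2}^2=\frac12\norm{\nabla(\bff{u}_h^n-\bff{u}_h^{n-1})}{\bb{L}^2}^2$, and similarly for the $\kappa$-term and the $r$-term.
\end{itemize}

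With that reading, the estimate splits into two parts.

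\begin{enumerate}
\item \emph{The part $\widetilde{\mathcal{E}}[\bff{u}_h^n]-\mathcal{E}[\bff{u}(t_n)]$.} This is handled as in Theorem~\ref{the:err euler}. We use $\abs{\abs{\bff{a}}^2-\abs{\bff{b}}^2}\le(\abs{\bff{a}}+\abs{\bff{b}})\abs{\bff{a}-\bff{b}}$, the uniform bound \eqref{equ:uhn H1 stab cn}, and the fact that $\abs{r(t_n)}^2=\mathcal{F}[\bff{u}(t_n)]$. These give a bound by $C\norm{\bff{u}_h^n-\bff{u}(t_n)}{\bb{H}^1}+C\abs{e^n}\le C(h+k^2)$, using Step~1 and \eqref{equ:error theta n cn}.
\item \emph{The increment terms.} We split $\bff{u}_h^n-\bff{u}_h^{n-1}=(\bff{\theta}^n-\bff{\theta}^{n-1})+R_h(\bff{u}^n-\bff{u}^{n-1})$. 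The first piece is $O(h^2+k^2)$ in $\bb{H}^1$ by \eqref{equ:error theta n cn}. For the second, $\bb{H}^1$-stability of $R_h$ and \eqref{equ:reg u cn} give $\norm{R_h(\bff{u}^n-\bff{u}^{n-1})}{\bb{H}^1}\le Ck$. Likewise $\abs{r_h^n-r_h^{n-1}}\le\abs{e^n}+\abs{e^{n-1}}+\abs{r^n-r^{n-1}}\le C(h^2+k^2)+Ck$, where $\abs{r^n-r^{n-1}}\le Ck$ because $\partial_t r$ is bounded via \eqref{equ:llb sav eq3}. Squaring, all increment terms are $O(h^4+k^2)$.
\end{enumerate}

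Here I expect the main obstacle. The squared increments are only $O(k^2)$, so adding them to part~1 gives only $C(h+k^2)$ if one is content with $k^2$ from these terms. That is exactly the claimed rate, so the argument closes. Under the literal definition, however, this step would fail, which is why the Remark's form has to be the one used.
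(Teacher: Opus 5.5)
Your Steps 1 and 2 are exactly the paper's argument. You split via \eqref{equ:split un cn}, bound $\bff{\theta}^n$ by \eqref{equ:error theta n cn} or \eqref{equ:error theta Linfty cn}, and bound $\bff{\rho}^n$ by \eqref{equ:Ritz ineq} or \eqref{equ:Ritz infty ineq}.

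For \eqref{equ:good ener cn} the paper only says that the argument for \eqref{equ:good ener eul} carries over. That argument controls $\widetilde{\mathcal{E}}[\bff{u}_h^n]-\mathcal{E}[\bff{u}(t_n)]$ and never touches the three extra terms in \eqref{equ:modif ener cn}. Your objection to those terms is correct. As $h,k\to0$ we have $2\bff{u}_h^n-\bff{u}_h^{n-1}\to\bff{u}(t_n)$ in $\bb{H}^1$ and $2r_h^n-r_h^{n-1}\to r(t_n)$, so the extra terms tend to $\mathcal{E}[\bff{u}(t_n)]$. Hence $\abs{\widehat{\mathcal{E}}[\bff{u}_h^n]-\mathcal{E}[\bff{u}(t_n)]}\to\mathcal{E}[\bff{u}(t_n)]\ge\frac{\kappa}{4}\abs{\mathscr{D}}>0$. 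The third claim, read literally, is false, and no argument along the paper's lines can close it. Your Step 3 is therefore not an alternative proof but a proof of a corrected statement. The estimates in it are fine: the Euler-type bound plus $\abs{e^n}$ for $\widetilde{\mathcal{E}}$, and $O(h^4+k^2)$ for the squared increments via $\bff{\theta}^n-\bff{\theta}^{n-1}$, $R_h(\bff{u}^n-\bff{u}^{n-1})$ and the boundedness of $\partial_t r$.

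You should, however, say explicitly what you are changing, because the Remark you rely on is itself faulty.
\begin{itemize}
\item Its display is not an identity. The term $\frac12\norm{2\nabla\bff{u}_h^n-\nabla\bff{u}_h^{n-1}}{\bb{L}^2}^2$ is $O(1)$, whereas $2\norm{\nabla\bff{u}_h^{n-\frac12}-\nabla\bff{u}_h^{n-1}}{\bb{L}^2}^2$ is $O(h^4+k^2)$. So the functional you estimate is not the one proved non-increasing in \eqref{equ:disc ener cn}, and nothing in the paper shows it is a Lyapunov functional for the scheme.
\item Its $r$-term is printed without a square. Taken literally it equals $\frac12(r_h^n-r_h^{n-1})=O(k)$ and would give only $C(h+k)$. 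You must square it, as you tacitly do.
\end{itemize}

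Your reading is also essentially the only one that yields $k^2$. The natural repair $\frac12\widehat{\mathcal{E}}$ keeps the dissipation law and is consistent with $\mathcal{E}$. By $\frac14(\abs{\bff{a}}^2+\abs{2\bff{a}-\bff{b}}^2)=\frac12\abs{\bff{a}}^2+\frac12\bff{a}\cdot(\bff{a}-\bff{b})+\frac14\abs{\bff{a}-\bff{b}}^2$, it equals $\widetilde{\mathcal{E}}[\bff{u}_h^n]+\frac12\inpro{\nabla\bff{u}_h^n}{\nabla(\bff{u}_h^n-\bff{u}_h^{n-1})}+\dots$. Up to $O(h+k^2)$, the cross terms sum to $\frac{k}{2}$ times $\ddt\mathcal{E}[\bff{u}(t)]$ at $t=t_n$, i.e.\ to $-\frac{\alpha k}{2}\norm{\bff{H}(t_n)}{\bb{L}^2}^2$. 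So for the dissipated functional only $C(h+k)$ holds.

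A correct version of the theorem must therefore choose one of two options. Either it states $C(h+k^2)$ for your increment-based functional, which is not known to be dissipated. Or it states $C(h+k)$ for $\frac12\widehat{\mathcal{E}}$, which is dissipated.
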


\begin{proof}
	The estimate \eqref{equ:error Hs cn} follows from \eqref{equ:split un cn}, \eqref{equ:error theta n cn}, and \eqref{equ:Ritz ineq}. Correspondingly, inequality \eqref{equ:error Linf cn} follows from \eqref{equ:split un cn}, \eqref{equ:error theta Linfty cn}, and \eqref{equ:Ritz infty ineq}. The proof of \eqref{equ:good ener cn} follows the same argument as in that of \eqref{equ:good ener eul}, noting that we have established \eqref{equ:error Hs cn}.
\end{proof}

\begin{remark}
	{The analysis in this paper is carried out for the quartic potential $F(\bff{u})=\frac{\kappa}{4}(|\bff{u}|^4+1)$, which is standard in the derivation of the Landau--Lifshitz--Bloch equation from Ginzburg--Landau-type free energies above the Curie temperature~\cite{ChuNie20, ChuNowChaGar06}. Higher-order polynomial potentials may also arise in Landau expansions when additional terms are retained to more accurately describe the temperature dependence of the free energy near or above the Curie point.
	We note that the proposed SAV-based schemes and their discrete energy dissipation property extend to more general polynomial-type potentials without essential modification, as the SAV reformulation in Section~\ref{sec:sav reform} remains applicable.
	
	However, the error analysis relies on the specific polynomial structure of $\mathcal{F}$ and certain $\bb{L}^\infty$-type bounds. For higher-order polynomial potentials, estimates such as those in Lemmas~\ref{lem:nonlinear euler} and~\ref{lem:nonlinear bdf} will involve $\norm{\bff{u}_h^{n-1}}{\bb{L}^\infty}$, and the available $\ell^2(0,T;\bb{L}^\infty)$ stability is not sufficient to control these terms for more general nonlinearities, especially in three dimensions (see, e.g., the treatment of $I_8$ in \eqref{equ:ineq theta e} and $J_8$ in \eqref{equ:ineq theta e cn}).
	For this reason, we restrict the rigorous error analysis to the quartic potential. Extending the analysis to more general potentials remains an interesting topic for future work.}
\end{remark}

\section{Numerical experiments}\label{sec:exp}

The results of some numerical experiments using the open-source package \textsc{FEniCS} are reported in this section. In Simulations 1--2, we fix the domain $\mathscr{D}= [-1,1]^2 \subset \bb{R}^2$. Since the exact solution is not known, we use extrapolation to verify the order of convergence experimentally. To this end, let $\bff{u}_h^n$ be the finite element solution with spatial step size $h$ and time step size $k=\lfloor T/n\rfloor$. For $s\in \{0,1\}$, define the extrapolated order of convergence
\begin{equation*}
	\text{rate}_s :=  \log_2 \left[\frac{\max_n \norm{\bff{e}_{2h}}{\bb{H}^s}}{\max_n \norm{\bff{e}_{h}}{\bb{H}^s}}\right],
\end{equation*}
where $\bff{e}_h := \bff{u}_{h}^n-\bff{u}_{h/2}^n$.

We expect that for both schemes, when $k$ is sufficiently small,~$\text{rate}_s \approx h^{2-s}$. To verify experimentally the temporal rate of convergence for scheme~\eqref{equ:fem euler}, we set $k=Ch$ in the simulation and check that $\text{rate}_1 \approx 1$ (shown by Theorem~\ref{the:err euler}). Similarly for scheme~\eqref{equ:fem cn}, we let $k=Ch$ and check that $\text{rate}_0 \approx 2$ (in light of Theorem~\ref{the:err cn}). In each experiment, we plot the graph of energy vs time to show the energy evolution of the system for both numerical schemes.

{In Simulation 3, we compare the convergence behaviour of the schemes in~\cite{Soe25} with those proposed in this paper on the computational domain $\mathscr{D} = [0,1]^2$. For each scheme, a reference solution is first computed on a sufficiently fine mesh and time step, and used as a surrogate exact solution. Spatial (resp. temporal) convergence rates are obtained by fixing the time step (resp. mesh) and measuring the error under successive uniform refinement of the mesh size $h$ (resp. time step $k$). CPU times required to run the schemes are also compared.

Finally, in Simulation 4, we investigate the behaviour of the proposed SAV-based schemes under an adaptive time-stepping strategy driven by the discrepancy between the modified and physical energies. This experiment serves as a practical illustration of energy evolution under adaptive time-stepping. The evolution of both energies and the corresponding time-step adaptation are examined to assess the robustness and efficiency of the schemes, particularly during transient regimes.}

\subsection{Simulation 1 (Algorithm~\ref{alg:euler})}\label{sec:simulation 1}
To verify the order of convergence, we perform a numerical simulation using the first-order scheme described in Algorithm~\ref{alg:euler} for the following example. The coefficients in~\eqref{equ:llb a} are chosen as: $\gamma=50, \alpha=0.5, \sigma=0.5, \kappa=\mu=1.0$. The initial condition is given by
\[
	\bff{u}_0(x,y)= \big(\cos(2\pi y), 0, \sin(2\pi x)\big).
\]

Snapshots of the magnetic spin field $\bff{u}$ at selected times are presented in Figure~\ref{fig:snapshots field 2d}. The figure indicates the formation of domain walls, separating regions where clusters of magnetisation vectors are anti-parallel to each other. It also indicates that the magnitude of the magnetisation vectors is decaying to zero, as predicted by the theory in the regime above the Curie temperature~\cite{ChuNowChaGar06, Soe25}.

Plot of $\bff{e}_h$ against $1/h$ with $k=1\times 10^{-4}$ is shown in Figures~\ref{fig:order u 1 scheme 1}. Plot of $\bff{e}_h$ against $1/h$ with $k=h/10$ to verify the temporal error of scheme \eqref{equ:fem euler} in $\bb{H}^1$ norm is shown in Figure~\ref{fig:order u 2 scheme 1}. Graphs of energy (actual and modified) vs time with various values of $h$ and $k$ are plotted in Figures~\ref{fig:energy 1 scheme 1} and \ref{fig:energy 2 scheme 1}, showing the decay of the energies. These graphs also show that the modified energy $\widetilde{\mathcal{E}}[\bff{u}_h^n]$ is a good approximation to $\mathcal{E}[\bff{u}_h^n]$.

\begin{figure}[!htb]
	\centering
	\begin{subfigure}[b]{0.27\textwidth}
		\centering
		\includegraphics[width=\textwidth]{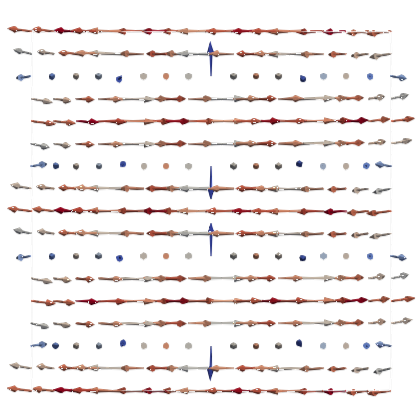}
		\caption{$t=0$}
	\end{subfigure}
	\begin{subfigure}[b]{0.27\textwidth}
		\centering
		\includegraphics[width=\textwidth]{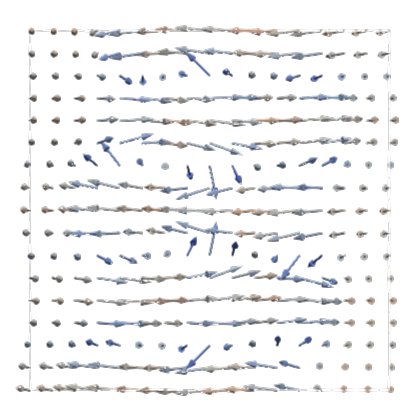}
		\caption{$t=1\times 10^{-4}$}
	\end{subfigure}
	\begin{subfigure}[b]{0.27\textwidth}
		\centering
		\includegraphics[width=\textwidth]{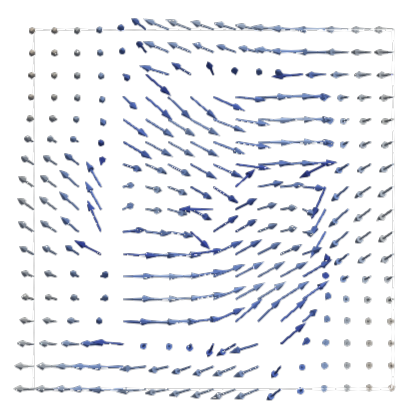}
		\caption{$t=2 \times 10^{-4}$}
	\end{subfigure}
	\begin{subfigure}[b]{0.1\textwidth}
		\centering
		\includegraphics[width=\textwidth]{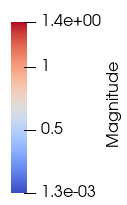}
	\end{subfigure}
	\begin{subfigure}[b]{0.27\textwidth}
		\centering
		\includegraphics[width=\textwidth]{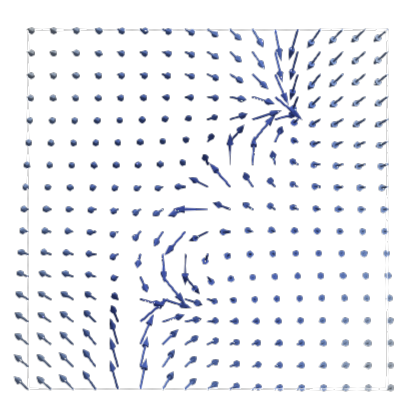}
		\caption{$t=5\times 10^{-4}$}
	\end{subfigure}
	\begin{subfigure}[b]{0.27\textwidth}
		\centering
		\includegraphics[width=\textwidth]{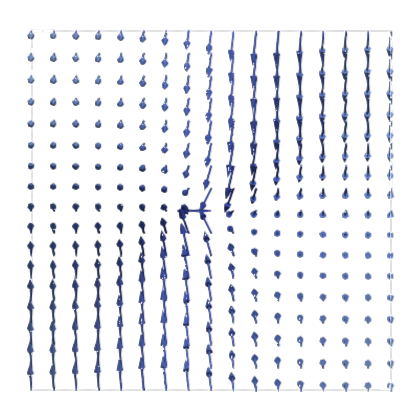}
		\caption{$t=1\times 10^{-3}$}
	\end{subfigure}
	\begin{subfigure}[b]{0.27\textwidth}
		\centering
		\includegraphics[width=\textwidth]{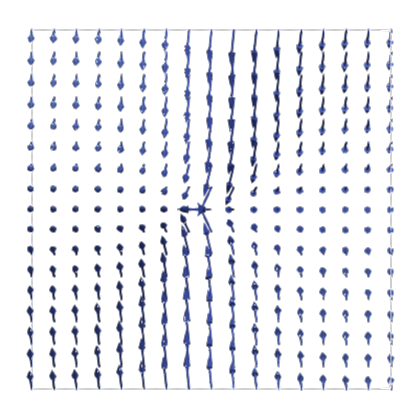}
		\caption{$t=2\times 10^{-3}$}
	\end{subfigure}
	\begin{subfigure}[b]{0.1\textwidth}
		\centering
		\includegraphics[width=\textwidth]{legend1.png}
	\end{subfigure}
	\caption{Snapshots of the spin field $\bff{u}$ (projected onto $\bb{R}^2$) for scheme~\eqref{equ:fem euler} in Simulation 1.}
	\label{fig:snapshots field 2d}
\end{figure}

\begin{figure}[!htb]
	\begin{subfigure}[b]{0.45\textwidth}
		\centering
		\begin{tikzpicture}
			\begin{axis}[
				title=Plot of $\bff{e}_h$ against $1/h$,
				height=1.05\textwidth,
				width=1\textwidth,
				xlabel= $1/h$,
				ylabel= $\bff{e}_h$,
				xmode=log,
				ymode=log,
				legend pos=south west,
				legend cell align=left,
				]
				\addplot+[mark=*,blue] coordinates {(4,7.8)(8,4.8)(16,2.8)(32,1.5)(64,0.75)(128,0.38)};
				\addplot+[mark=*,green] coordinates {(4,0.77)(8,0.35)(16,0.2)(32,0.11)(64,0.06)(128,0.02)};
				\addplot+[mark=*,red] coordinates {(4,1.2)(8,0.4)(16,0.146)(32,0.0469)(64,0.013)(128,0.0033)};
				\addplot+[dashed,no marks,blue,domain=40:130]{109/x};
				\addplot+[dashed,no marks,red,domain=40:130]{15/x^2};
				\legend{\small{$\max_n \norm{\bff{e}_h}{\bb{H}_0^1}$},
				\small{$\max_n \norm{\bff{e}_h}{\bb{L}^\infty}$},	 \small{$\max_n \norm{\bff{e}_h}{\bb{L}^2}$}, \small{order 1 line}, \small{order 2 line}}
			\end{axis}
		\end{tikzpicture}
		\caption{Spatial error order for scheme~\eqref{equ:fem euler} in Simulation 1.}
		\label{fig:order u 1 scheme 1}
	\end{subfigure}
	\hspace{1em}
	\begin{subfigure}[b]{0.45\textwidth}
		\centering
		\begin{tikzpicture}
			\begin{axis}[
				title=Plot of $\bff{e}_h$ against $1/h$,
				height=1.05\textwidth,
				width=1\textwidth,
				xlabel= $1/h$,
				ylabel= $\bff{e}_h$,
				xmode=log,
				ymode=log,
				legend pos=south west,
				legend cell align=left,
				]
				\addplot+[mark=*,red] coordinates {(8,4.8)(16,3.3)(32,2.0)(64,1.3)};
				\addplot+[dashed,no marks,red,domain=20:60]{47/x};
				\legend{\small{$\max_n \norm{\bff{e}_h}{\bb{H}_0^1}$},  \small{order 1 line}}
			\end{axis}
		\end{tikzpicture}
		\caption{Error order for scheme~\eqref{equ:fem euler} in Simulation 1, taking $k=Ch$.}
		\label{fig:order u 2 scheme 1}
	\end{subfigure}
	\caption{Spatial and temporal order of convergence for Simulation 1.}
\end{figure}

\begin{figure}[!htb]
	\begin{subfigure}[b]{0.48\textwidth}
		\centering
		\includegraphics[width=\textwidth]{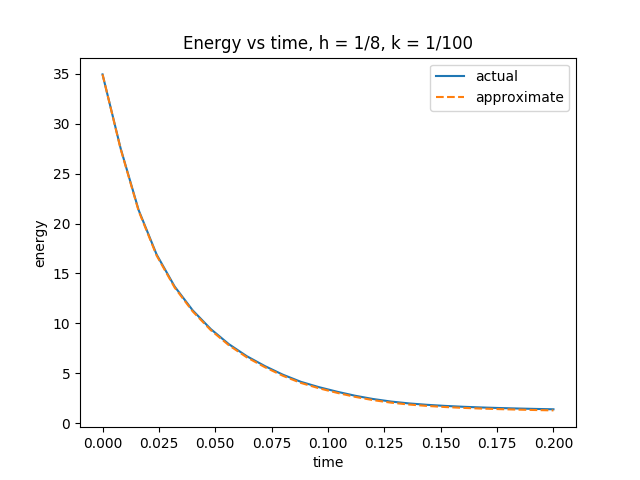}
		\caption{$h=1/8$ and $k=1/100$.}
		\label{fig:energy 1 scheme 1}
	\end{subfigure}
	\hspace{1em}
	\begin{subfigure}[b]{0.48\textwidth}
		\centering
		\includegraphics[width=\textwidth]{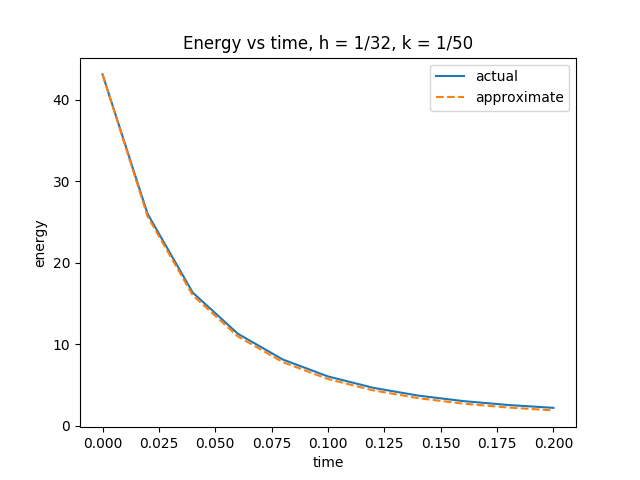}
		\caption{$h=1/32$ and $k=1/50$.}
		\label{fig:energy 2 scheme 1}
	\end{subfigure}
	\caption{Plots of (approximate and actual) energy vs time for scheme~\eqref{equ:fem euler} in Simulation~1.}
\end{figure}

\subsection{Simulation 2 (Algorithm~\ref{alg:bdf})}\label{sec:simulation 2}
We perform a numerical simulation using the second-order scheme in Algorithm~\ref{alg:bdf} with the following academic example. The coefficients in~\eqref{equ:llb a} are set to be: $\gamma=100, \alpha=0.1, \sigma=0.1, \kappa=2.0, \mu=1.0$. The initial data is taken to be
\[
\bff{u}_0(x,y)= \big(-y, x, \cos(2\pi x)\big).
\]

Snapshots of the magnetic spin field $\bff{u}$ at selected times are presented in Figure~\ref{fig:snapshots field 2d 2}.
Plot of $\bff{e}_h$ against $1/h$ with $k=1\times 10^{-5}$ is shown in Figures~\ref{fig:order u 1 scheme 2}. Plot of $\bff{e}_h$ against $1/h$ with $k=h/20$ to verify the temporal error of scheme \eqref{equ:fem cn} in $\bb{H}^1$ norm is shown in Figure~\ref{fig:order u 2 scheme 2}. Graphs of energy (actual and modified) vs time with various values of $h$ and $k$ are plotted in Figures~\ref{fig:energy 1 scheme 2} and \ref{fig:energy 2 scheme 2}, showing the decay of the energies. These graphs also show that the modified energy $\widehat{\mathcal{E}}[\bff{u}_h^n]$ is a good approximation to $\mathcal{E}[\bff{u}_h^n]$.

\begin{figure}[!htb]
	\centering
	\begin{subfigure}[b]{0.27\textwidth}
		\centering
		\includegraphics[width=\textwidth]{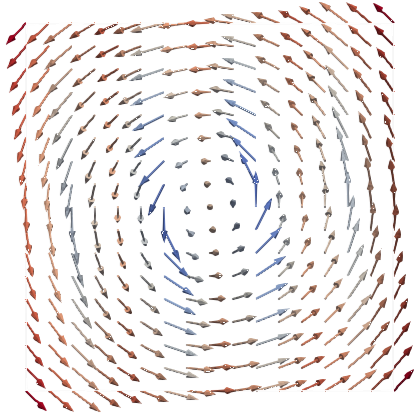}
		\caption{$t=0$}
	\end{subfigure}
	\begin{subfigure}[b]{0.27\textwidth}
		\centering
		\includegraphics[width=\textwidth]{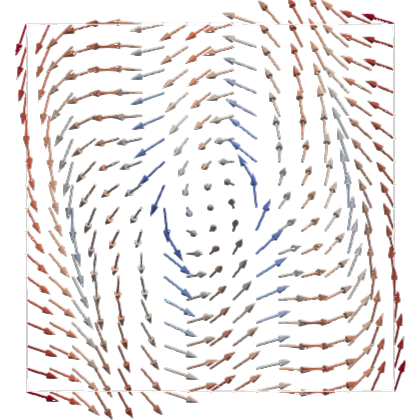}
		\caption{$t=2\times 10^{-3}$}
	\end{subfigure}
	\begin{subfigure}[b]{0.27\textwidth}
		\centering
		\includegraphics[width=\textwidth]{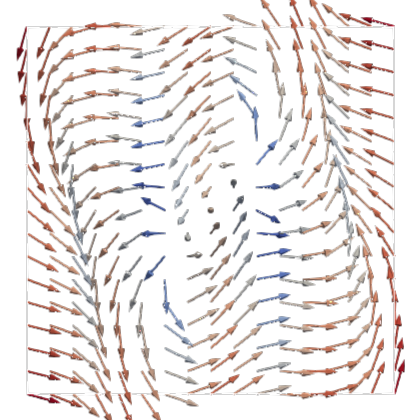}
		\caption{$t=5 \times 10^{-3}$}
	\end{subfigure}
	\begin{subfigure}[b]{0.1\textwidth}
		\centering
		\includegraphics[width=\textwidth]{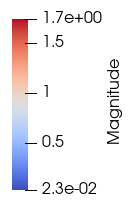}
	\end{subfigure}
	\begin{subfigure}[b]{0.27\textwidth}
		\centering
		\includegraphics[width=\textwidth]{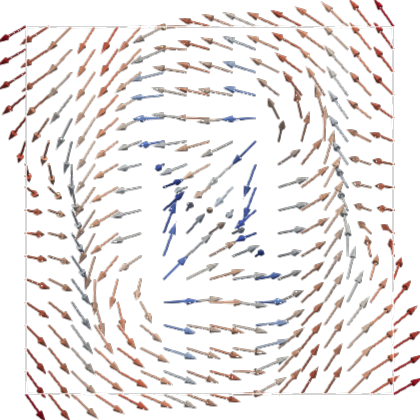}
		\caption{$t=1\times 10^{-2}$}
	\end{subfigure}
	\begin{subfigure}[b]{0.27\textwidth}
		\centering
		\includegraphics[width=\textwidth]{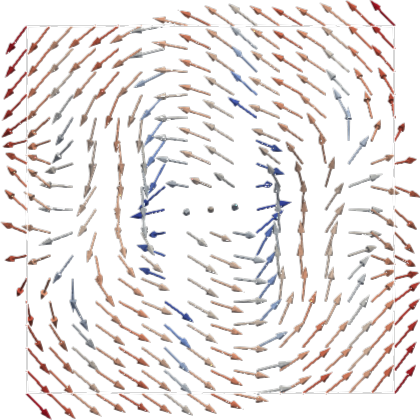}
		\caption{$t=1.5\times 10^{-2}$}
	\end{subfigure}
	\begin{subfigure}[b]{0.27\textwidth}
		\centering
		\includegraphics[width=\textwidth]{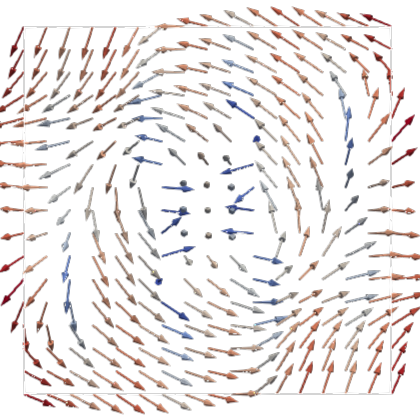}
		\caption{$t=2\times 10^{-2}$}
	\end{subfigure}
	\begin{subfigure}[b]{0.1\textwidth}
		\centering
		\includegraphics[width=\textwidth]{legend_2.png}
	\end{subfigure}
	\caption{Snapshots of the spin field $\bff{u}$ (projected onto $\bb{R}^2$) for scheme~\eqref{equ:fem cn} in Simulation 2.}
	\label{fig:snapshots field 2d 2}
\end{figure}

\begin{figure}[!htb]
	\begin{subfigure}[b]{0.45\textwidth}
		\centering
		\begin{tikzpicture}
			\begin{axis}[
				title=Plot of $\bff{e}_h$ against $1/h$,
				height=1.3\textwidth,
				width=1\textwidth,
				xlabel= $1/h$,
				ylabel= $\bff{e}_h$,
				xmode=log,
				ymode=log,
				legend pos=south west,
				legend cell align=left,
				]
				\addplot+[mark=*,blue] coordinates {(8,3.87)(16,2.0)(32,1.03)(64,0.516)(128,0.26)};
				\addplot+[mark=*,green] coordinates {(8,1.25)(16,0.2)(32,0.077)(64,0.025)(128,0.007)};
				\addplot+[mark=*,red] coordinates {(8,0.25)(16,0.066)(32,0.017)(64,0.0043)(128,0.0011)};
				\addplot+[dashed,no marks,blue,domain=40:130]{80/x};
				\addplot+[dashed,no marks,red,domain=40:130]{39/x^2};
				\legend{\small{$\max_n \norm{\bff{e}_h}{\bb{H}_0^1}$},
					\small{$\max_n \norm{\bff{e}_h}{\bb{L}^\infty}$},	 \small{$\max_n \norm{\bff{e}_h}{\bb{L}^2}$}, \small{order 1 line}, \small{order 2 line}}
			\end{axis}
		\end{tikzpicture}
		\caption{Spatial error order for scheme~\eqref{equ:fem cn} in Simulation 2.}
		\label{fig:order u 1 scheme 2}
	\end{subfigure}
	\hspace{1em}
	\begin{subfigure}[b]{0.45\textwidth}
		\centering
		\begin{tikzpicture}
			\begin{axis}[
				title=Plot of $\bff{e}_h$ against $1/h$,
				height=1.3\textwidth,
				width=1\textwidth,
				xlabel= $1/h$,
				ylabel= $\bff{e}_h$,
				xmode=log,
				ymode=log,
				legend pos=south west,
				legend cell align=left,
				]
				\addplot+[mark=*,red] coordinates {(8,14)(16,2.3)(32,0.75)(64,0.19)};
				\addplot+[dashed,no marks,red,domain=20:60]{280/x^2};
				\legend{\small{$\max_n \norm{\bff{e}_h}{\bb{L}^2}$},  \small{order 2 line}}
			\end{axis}
		\end{tikzpicture}
		\caption{Error order for scheme~\eqref{equ:fem cn} in Simulation 2, taking $k=Ch$.}
		\label{fig:order u 2 scheme 2}
	\end{subfigure}
	\caption{Spatial and temporal order of convergence for Simulation 2.}
\end{figure}

\begin{figure}[!htb]
	\begin{subfigure}[b]{0.48\textwidth}
		\centering
		\includegraphics[width=\textwidth]{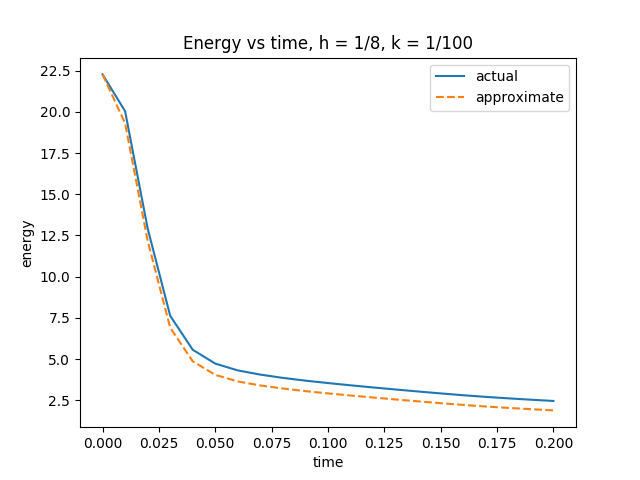}
		\caption{$h=1/8$ and $k=1/100$.}
		\label{fig:energy 1 scheme 2}
	\end{subfigure}
	\hspace{1em}
	\begin{subfigure}[b]{0.48\textwidth}
		\centering
		\includegraphics[width=\textwidth]{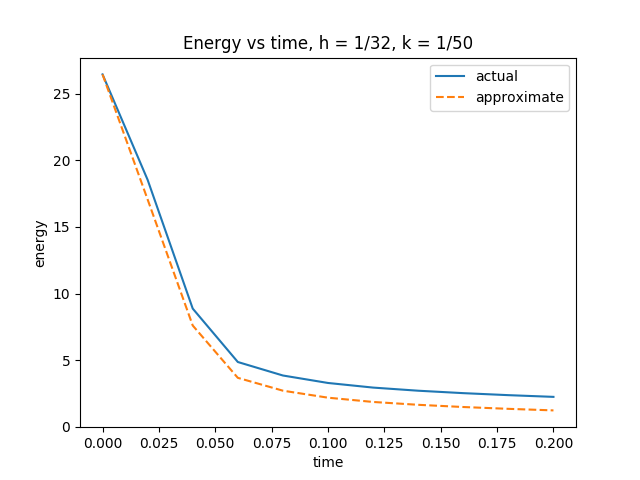}
		\caption{$h=1/32$ and $k=1/50$.}
		\label{fig:energy 2 scheme 2}
	\end{subfigure}
	\caption{Plots of (approximate and actual) energy vs time for scheme~\eqref{equ:fem cn} in Simulation~2.}
\end{figure}

\subsection{Simulation 3 (Comparison with other schemes)}\label{sec:simulation 3}

{Two finite element schemes were proposed in~\cite{Soe25}, which are stated below for the convenience of the readers. The first scheme is a linear semi-implicit scheme, which is not necessarily energy-stable:
	
\begin{algorithm}[Semi-implicit linear non-SAV FEM~\cite{Soe25}]\label{alg:linear nonsav}
	Let $h>0$ and $k>0$ be given.
	\\
	\textbf{Input}: Given $\bff{u}_h^0\in \bb{V}_h$.
	\\
	\textbf{For} $n=1$ to $N$, where $N=\lfloor T/k \rfloor$, compute $\bff{u}_h^n\in \bb{V}_h$ satisfying for any $\bff{\phi}\in \bb{V}_h$,
	\begin{align*}
		\inpro{\dtt \bff{u}_h^n}{\bff{\phi}}= \gamma \sigma \inpro{\bff{u}_h^{n-1} \times \nabla \bff{u}_h^n}{\nabla\bff{\phi}} - \alpha\sigma \inpro{\nabla \bff{u}_h^n}{\nabla \bff{\phi}} - \alpha\kappa\mu \inpro{\bff{u}_h^n}{\bff{\phi}} - \alpha\kappa \inpro{\abs{\bff{u}_h^{n-1}}^2 \bff{u}_h^n}{\bff{\phi}}.
	\end{align*}
	\textbf{Output}: a sequence of discrete functions $\{\bff{u}_h^n\}_{1\leq n\leq N}$.
\end{algorithm}

The second scheme is a nonlinear scheme, which satisfies a discrete energy law:

\begin{algorithm}[Nonlinear non-SAV FEM~\cite{Soe25}]\label{alg:nonlinear nonsav}
	Let $h>0$ and $k>0$ be given.
	\\
	\textbf{Input}: Given $\bff{u}_h^0\in \bb{V}_h$.
	\\
	\textbf{For} $n=1$ to $N$, where $N=\lfloor T/k \rfloor$, compute $\bff{u}_h^n\in \bb{V}_h$ satisfying for any $\bff{\phi}\in \bb{V}_h$,
	\begin{align*}
		\inpro{\dtt \bff{u}_h^n}{\bff{\phi}}= -\gamma \inpro{\bff{u}_h^n \times \bff{H}_h^n}{\bff{\phi}} + \alpha \inpro{\bff{H}_h^n}{\bff{\phi}},
	\end{align*}
	with $\bff{H}_h^n:= \sigma \Delta_h \bff{u}_h^n- \kappa\mu \bff{u}_h^n- \kappa P_h\big(\abs{\bff{u}_h^n}^2 \bff{u}_h^n\big) \in \bb{V}_h$. Here, $\Delta_h$ is the discrete Laplacian and $P_h$ is the $\bb{L}^2$-orthogonal projector onto $\bb{V}_h$.
	\\
	\textbf{Output}: a sequence of discrete functions $\{\bff{u}_h^n\}_{1\leq n\leq N}$.
\end{algorithm}
	
We compare the above schemes with the SAV-based schemes introduced in this paper.
The computational domain is $\mathscr{D} = [0,1]^2$, and the coefficients in~\eqref{equ:llb a} are chosen as $\alpha=\sigma=0.1$ and $\gamma=\kappa=\mu=1.0$. The initial condition is given by
\[
\bff{u}_0(x,y)=\big(\sin(\pi x)\cos(\pi y), -\cos(\pi x) \sin(\pi y), \cos(\pi x)\cos(\pi y)\big).
\]
All simulations are performed up to the final time $T$, to be specified below.

To assess convergence, we take $T=0.05$ and compute a reference solution $\bff{u}_{\mathrm{ref}}(T)$ on a sufficiently fine discretisation with mesh size $h=1/80$ and time step $k=2\times 10^{-4}$, which is treated as a surrogate exact solution. Let $\bff{u}_h^N$ denote the numerical solution at time $T$, and define the error $\widetilde{\bff{e}}_h^N:= \bff{u}_h^N- \bff{u}_{\mathrm{ref}}(T)$.
Spatial convergence is examined by fixing the time step $k$ and computing the $\bb{L}^2$ norm of $\widetilde{\bff{e}}_h^N$ on a sequence of successively refined meshes obtained by halving $h$; the results are reported in Table~\ref{tab:temp}. Temporal convergence is assessed analogously by fixing $h$ and successively halving $k$, with results shown in Table~\ref{tab:spat}. The computational cost of each scheme, measured in CPU time, is reported in Tables~\ref{tab:cpu temp} and~\ref{tab:cpu spat}.

The observed convergence rates are consistent with the theoretical predictions for all schemes. In terms of computational efficiency, the linear scheme in~\cite{Soe25} is the least expensive, as it requires only the solution of linear systems at each time step. In contrast, the nonlinear scheme in~\cite{Soe25} incurs the highest computational cost (in terms of CPU time) due to the need to solve nonlinear systems iteratively. The SAV-based schemes proposed in this paper retain linearity (at the expense of introducing an auxiliary variable) while provably enforcing a discrete energy law, and thus provide a favourable compromise between computational efficiency and energy stability, without loss of accuracy or convergence order.

\begin{table}[!h]
	\centering
	\small
	\setlength{\tabcolsep}{6pt}
	\begin{tabular}{c cc cc cc cc}
		\toprule
		& \multicolumn{2}{c}{linear non-SAV}
		& \multicolumn{2}{c}{nonlinear non-SAV}
		& \multicolumn{2}{c}{Euler--SAV}
		& \multicolumn{2}{c}{BDF2--SAV} \\
		\cmidrule(lr){2-3} \cmidrule(lr){4-5}
		\cmidrule(lr){6-7} \cmidrule(lr){8-9}
		$k$ & error & rate & error & rate & error & rate & error & rate \\
		\midrule
		$1.0\times 10^{-2}$
		& $1.048\times 10^{-2}$ & --
		& $1.018\times 10^{-2}$ & --
		& $1.032\times 10^{-2}$ & --
		& $7.013\times 10^{-3}$ & -- \\
		
		$5.0\times 10^{-3}$
		& $6.704\times 10^{-3}$ & 0.64
		& $6.512\times 10^{-3}$ & 0.64
		& $6.596\times 10^{-3}$ & 0.65
		& $3.736\times 10^{-3}$ & 0.91 \\
		
		$2.5\times 10^{-3}$
		& $4.026\times 10^{-3}$ & 0.74
		& $3.916\times 10^{-3}$ & 0.73
		& $3.964\times 10^{-3}$ & 0.73
		& $1.744\times 10^{-3}$ & 1.10 \\
		
		$1.25\times 10^{-3}$
		& $2.203\times 10^{-3}$ & 0.87
		& $2.146\times 10^{-3}$ & 0.87
		& $2.171\times 10^{-3}$ & 0.87
		& $6.677\times 10^{-4}$ & 1.39 \\
		
		$6.25\times 10^{-4}$
		& $1.023\times 10^{-3}$ & 1.11
		& $9.972\times 10^{-4}$ & 1.11
		& $1.009\times 10^{-3}$ & 1.11
		& $1.955\times 10^{-4}$ & 1.78 \\
		\bottomrule
	\end{tabular}
	\caption{$\bb{L}^2$ errors, obtained by computing the $\bb{L}^2$ norms of $\widetilde{\bff{e}}_h^N:= \bff{u}_h^N- \bff{u}_{\mathrm{ref}}(T)$, and experimental temporal convergence rates at $T=0.05$ for successively halved time step sizes $k$. The results are shown for the linear and nonlinear non-SAV schemes (Algorithms~\ref{alg:linear nonsav} and~\ref{alg:nonlinear nonsav}) from~\cite{Soe25}, as well as the Euler--SAV scheme (Algorithm~\ref{alg:euler}) and the BDF2--SAV scheme (Algorithm~\ref{alg:bdf}). The observed temporal convergence rates are consistent with the theory.}
	\label{tab:temp}
\end{table}

\begin{table}[!h]
	\centering
	\small
	\setlength{\tabcolsep}{6pt}
	\begin{tabular}{c cc cc cc cc}
		\toprule
		& \multicolumn{2}{c}{linear non-SAV}
		& \multicolumn{2}{c}{nonlinear non-SAV}
		& \multicolumn{2}{c}{Euler--SAV}
		& \multicolumn{2}{c}{BDF2--SAV} \\
		\cmidrule(lr){2-3} \cmidrule(lr){4-5}
		\cmidrule(lr){6-7} \cmidrule(lr){8-9}
		$h$ & error & rate & error & rate & error & rate & error & rate \\
		\midrule
		$1/4$
		& $1.132\times 10^{-1}$ & --
		& $1.132\times 10^{-1}$ & --
		& $7.100\times 10^{-2}$ & --
		& $7.114\times 10^{-2}$ & -- \\
		
		$1/8$
		& $3.569\times 10^{-2}$ & 1.67
		& $3.571\times 10^{-2}$ & 1.66
		& $2.787\times 10^{-2}$ & 1.35
		& $2.817\times 10^{-2}$ & 1.34 \\
		
		$1/16$
		& $1.298\times 10^{-2}$ & 1.46
		& $1.299\times 10^{-2}$ & 1.46
		& $1.190\times 10^{-2}$ & 1.23
		& $1.253\times 10^{-2}$ & 1.17 \\
		
		$1/32$
		& $3.815\times 10^{-3}$ & 1.77
		& $3.822\times 10^{-3}$ & 1.76
		& $3.664\times 10^{-3}$ & 1.70
		& $4.096\times 10^{-3}$ & 1.61 \\
		
		$1/64$
		& $5.238\times 10^{-4}$ & 2.85
		& $5.247\times 10^{-4}$ & 2.85
		& $5.121\times 10^{-4}$ & 2.83
		& $5.972\times 10^{-4}$ & 2.77 \\
		\bottomrule
	\end{tabular}
	\caption{$\bb{L}^2$ errors, obtained by computing the $\bb{L}^2$ norms of $\widetilde{\bff{e}}_h^N:= \bff{u}_h^N- \bff{u}_{\mathrm{ref}}(T)$, and experimental spatial convergence rates at $T=0.05$ for successively halved mesh sizes $h$. The results are shown for the linear and nonlinear non-SAV schemes (Algorithms~\ref{alg:linear nonsav} and~\ref{alg:nonlinear nonsav}) from~\cite{Soe25}, as well as the Euler--SAV scheme (Algorithm~\ref{alg:euler}) and the BDF2--SAV scheme (Algorithm~\ref{alg:bdf}). The observed spatial convergence rates are consistent with the theory.}
	\label{tab:spat}
\end{table}

\begin{table}[!h]
	\centering
	\small
	\setlength{\tabcolsep}{7pt}
	\begin{tabular}{c cccc}
		\toprule
		& \multicolumn{4}{c}{CPU time (s)} \\
		\cmidrule(lr){2-5}
		$k$ & linear non-SAV & nonlinear non-SAV & Euler--SAV & BDF2--SAV \\
		\midrule
		$1.0\times 10^{-2}$ & $0.420$ & $1.103$ & $0.517$ & $0.586$ \\
		$5.0\times 10^{-3}$ & $0.846$ & $1.850$ & $1.034$ & $1.257$ \\
		$2.5\times 10^{-3}$ & $1.675$ & $3.234$ & $2.210$ & $2.284$ \\
		$1.25\times 10^{-3}$ & $3.532$ & $5.816$ & $4.542$ & $4.609$ \\
		$6.25\times 10^{-4}$ & $6.773$ & $10.875$ & $8.933$ & $8.949$ \\
		\bottomrule
	\end{tabular}
	\caption{CPU times (in seconds) at $T=0.05$ for successively refined time step sizes $k$. Results are reported for the linear and nonlinear non-SAV schemes (Algorithms~\ref{alg:linear nonsav} and~\ref{alg:nonlinear nonsav}) from~\cite{Soe25}, and for the Euler--SAV (Algorithm~\ref{alg:euler}) and BDF2--SAV (Algorithm~\ref{alg:bdf}) schemes.}
	\label{tab:cpu temp}
\end{table}

\begin{table}[!h]
	\centering
	\small
	\setlength{\tabcolsep}{7pt}
	\begin{tabular}{c cccc}
		\toprule
		& \multicolumn{4}{c}{CPU time (s)} \\
		\cmidrule(lr){2-5}
		$h$ & linear non-SAV & nonlinear non-SAV & Euler--SAV & BDF2--SAV \\
		\midrule
		$1/4$  & $0.093$  & $0.115$  & $0.296$  & $0.357$ \\
		$1/8$  & $0.192$  & $0.256$  & $0.479$  & $0.489$ \\
		$1/16$ & $0.556$  & $0.878$  & $0.849$  & $0.914$ \\
		$1/32$ & $2.119$  & $3.671$  & $2.931$  & $3.327$ \\
		$1/64$ & $12.280$ & $19.078$ & $15.854$ & $15.807$ \\
		\bottomrule
	\end{tabular}
	\caption{CPU times (in seconds) at $T=0.05$ for successively refined mesh sizes $h$. Results are reported for the linear and nonlinear non-SAV schemes (Algorithms~\ref{alg:linear nonsav} and~\ref{alg:nonlinear nonsav}) from~\cite{Soe25}, and for the Euler--SAV (Algorithm~\ref{alg:euler}) and BDF2--SAV (Algorithm~\ref{alg:bdf}) schemes.}
	\label{tab:cpu spat}
\end{table}

\subsection{Simulation 4 (Adaptive time-stepping and energy dynamics)}\label{sec:simulation 4}

While the theoretical analysis establishes that the proposed SAV-based schemes unconditionally dissipate the modified discrete energy for any fixed time step $k$, this property alone does not guarantee that the modified energy remains a tight approximation of the actual physical energy $\mathcal{E}[\bff{u}]$ as time progresses, especially during rapid phase transitions. To demonstrate the practical robustness of the schemes, we implement a local phase-error-driven adaptive time-stepping strategy in this simulation.

We define the SAV discrepancy at time level $t_n$ as $\delta^n:= (r_h^n)^2- \mathcal{F}[\bff{u}_h^n]$. To prevent the artificial accumulation of global historical error from locking the time step size, we control the local phase error increment injected during the trial step. The relative local phase error is defined as
\begin{align*}
	e_{\mathrm{loc}}^n := \frac{\abs{\delta^{n+1}-\delta^n}}{\mathcal{E}[\bff{u}_h^{n+1}]}.
\end{align*}
Given a prescribed tolerance $\mathsf{tol}$, the new time step size is computed via a continuous fractional controller:
\begin{align*}
	k_{\mathrm{new}} = \rho \left( \frac{\mathsf{tol}}{e_{\mathrm{loc}}^n} \right)^{1/p} k_{\mathrm{old}},
\end{align*}
where $p$ is the temporal convergence order of the scheme ($p=1$ for Euler--SAV and $p=2$ for BDF2--SAV) and $\rho = 0.9$ is a safety factor designed to absorb higher-order nonlinear fluctuations and minimize step rejections. To prevent step-size chattering and avoid unnecessary matrix refactorisation, a dead-band filter is applied: $k$ is held strictly constant if the proposed scaling factor falls within $[0.9, 1.1]$. If $e_{\mathrm{loc}}^n > \mathsf{tol}$, the step is rejected and recomputed with $k_{\mathrm{new}}$; otherwise, the step is accepted and $k_{\mathrm{new}}$ is utilised for the subsequent step.

To evaluate this adaptive mechanism, we set the computational domain $\mathscr{D}=[0,1]^2$. The physical coefficients are chosen as $\gamma=50.0$, $\alpha=\sigma=\mu=1.0$, and $\kappa=5.0$. We impose the initial condition
\[
	\bff{u}_0(x,y)= \big( \cos(\pi x), \cos(2\pi y), \cos(\pi x)\cos(\pi y)\big).
\]
We run the simulation up to the final time $T=0.005$ with an initial time step $k=10^{-5}$ and a tolerance $\mathsf{tol}=10^{-5}$.

Figures~\ref{fig:energy adapt 1} and \ref{fig:energy adapt 2} display the evolution of the actual and modified energies alongside the dynamically adapted time step $k$ for both SAV schemes. In both cases, the adaptive controller successfully restricts the local phase error, ensuring the modified energy $\widetilde{\mathcal{E}}$ tightly anchors the actual physical energy $\mathcal{E}$ throughout the simulation. As the initial stiff transients decay, the controller scales up the time step. Notably, the plots highlight a clear efficiency advantage for the BDF2--SAV scheme: exploiting its $\mathcal{O}(k^2)$ temporal convergence, it aggressively expands the time step to roughly $7 \times 10^{-5}$, whereas the first-order Euler--SAV scheme remains constrained below $2 \times 10^{-5}$ to maintain the same prescribed tolerance.

\begin{figure}[!htb]
	\centering
	\includegraphics[width=0.8\textwidth]{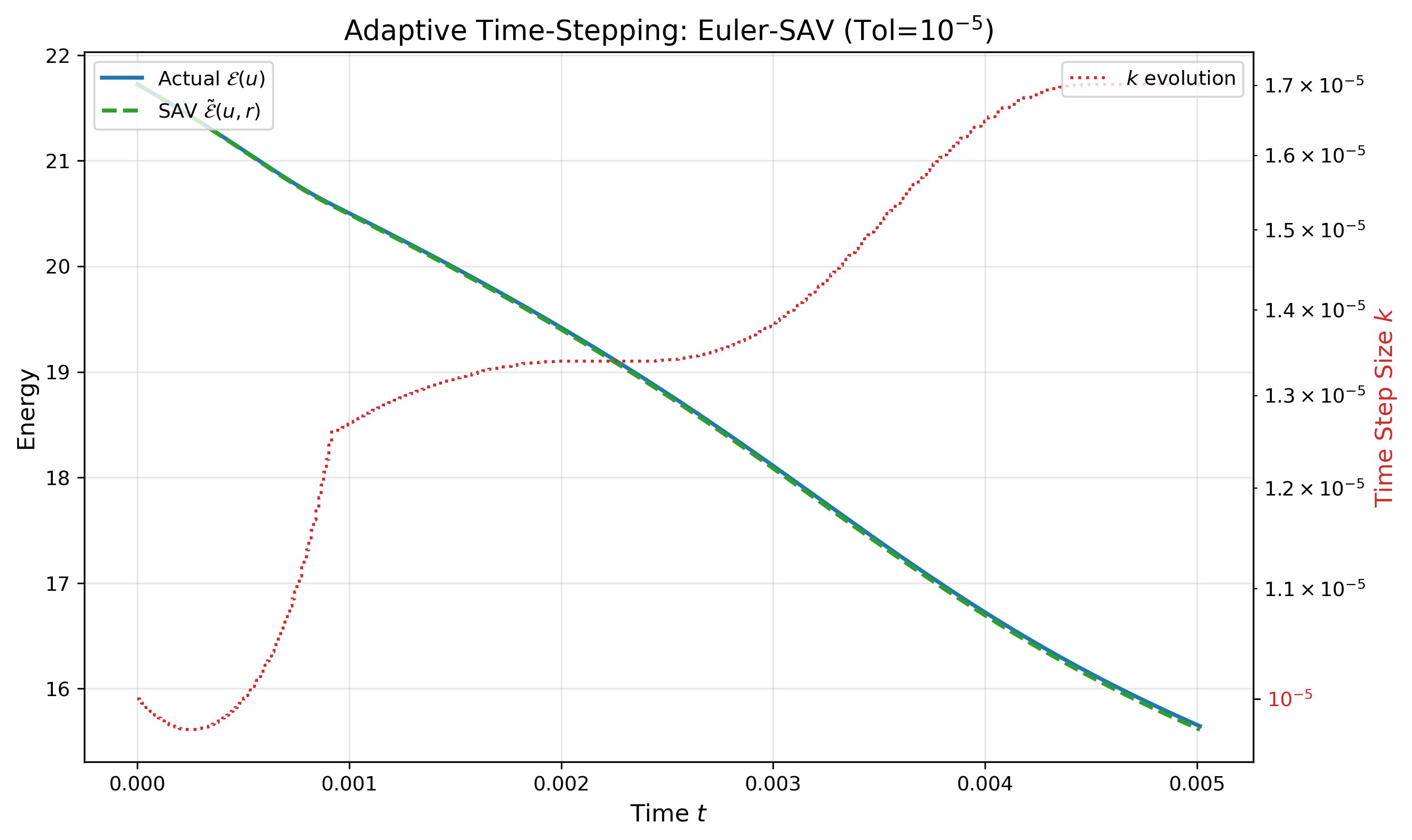}
	\caption{Evolution of the actual physical energy and modified SAV energy alongside the dynamically selected time step size $k$ (right axis) for the first-order Euler--SAV scheme (Algorithm~\ref{alg:euler}).}
	\label{fig:energy adapt 1}
\end{figure}

\begin{figure}[!htb]
	\centering
	\includegraphics[width=0.8\textwidth]{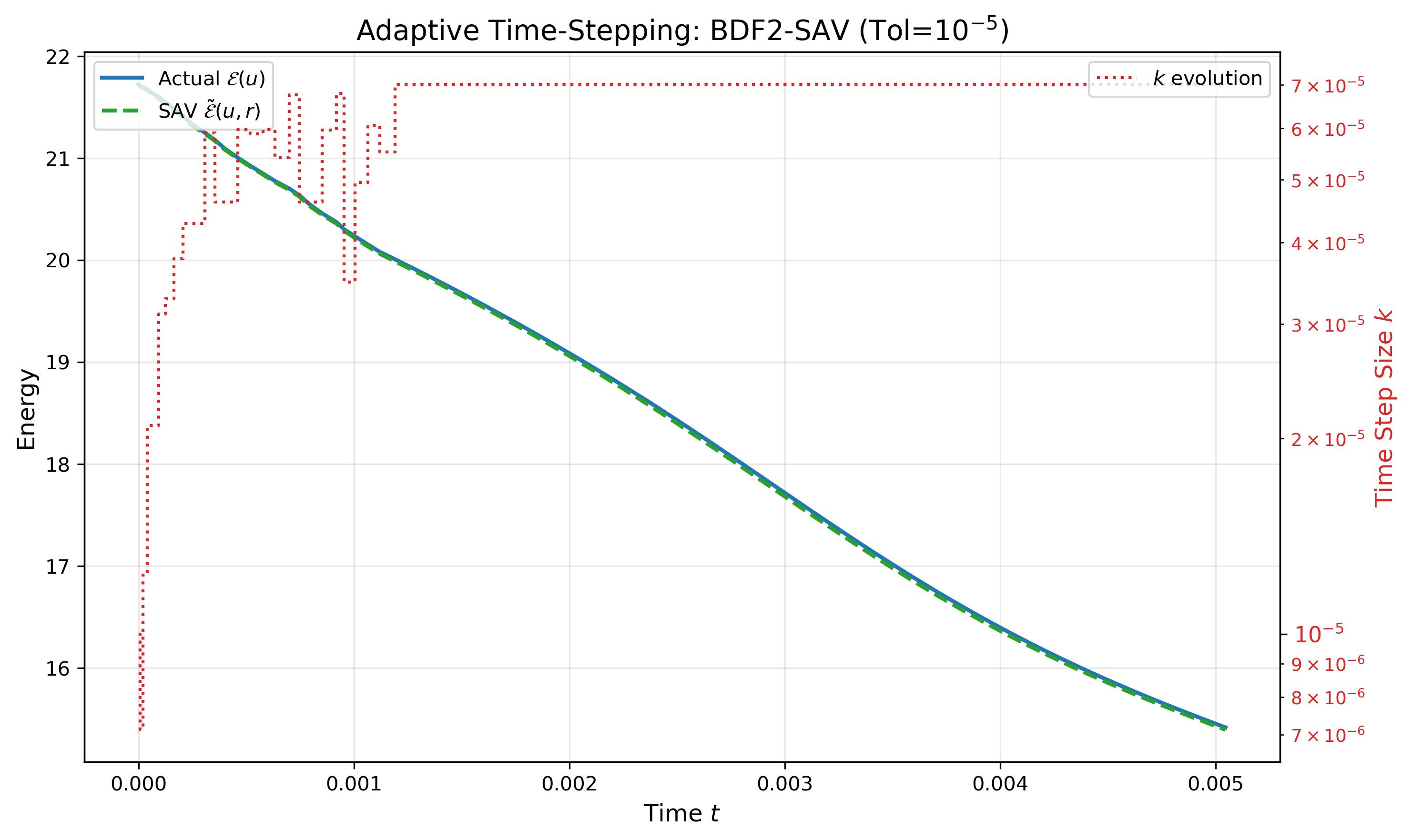}
	\caption{Evolution of the actual physical energy and modified SAV energy alongside the dynamically selected time step size $k$ (right axis) for the first-order BDF2--SAV scheme (Algorithm~\ref{alg:bdf}).}
	\label{fig:energy adapt 2}
\end{figure}
}

\section*{Statements and declarations}

\subsection*{Conflict of interest}
The author has no competing interests to declare that are relevant to the content of this article.

\subsection*{Funding statement}
This research is supported by the Commonwealth through an Australian Government Research Training Program Scholarship [DOI: \href{https://doi.org/10.82133/C42F-K220}{https://doi.org/10.82133/C42F-K220}]. Financial support from the Edinburgh Mathematical Society Research Fund (EMS-RSF) Grant E2503-LIN and from the Australian Research Council (Grant DP220101811) is gratefully acknowledged.

\subsection*{Acknowledgement}
Part of this work was carried out during a research visit to the Department of Mathematics at the University of Dundee. The author is grateful to Prof. Ping Lin for his generous hospitality and to the university for providing a stimulating research environment. The author also thanks Prof. Thanh Tran for valuable suggestions. Finally, the author thanks the referees for their constructive comments, which led to a significant improvement of the paper.

%\bibliographystyle{myabbrv}
%\bibliography{mybib}

\begin{thebibliography}{10}
	
	\bibitem{AtxHinNow16}
	U.~Atxitia, D.~Hinzke, and U.~Nowak.
	\newblock Fundamentals and applications of the {L}andau–{L}ifshitz–{B}loch
	equation.
	\newblock {\em Journal of Physics D: Applied Physics},  {\bf 50} (2016),
	033003.
	
	\bibitem{Bar84}
	V.~G. Baryakhtar.
	\newblock Phenomenological description of relaxation processes in magnets.
	\newblock {\em Zh. Eksp. Teor. Fiz.},  {\bf 87} (1984).
	
	\bibitem{BenEssAyo24}
	M.~Benmouane, E.-H. Essoufi, and C.~Ayouch.
	\newblock A finite element scheme for the {L}andau--{L}ifshitz--{B}loch
	equation.
	\newblock {\em Comput. Appl. Math.},  {\bf 43} (2024), Paper No. 394.
	
	\bibitem{Bre04}
	S.~C. Brenner.
	\newblock Discrete {S}obolev and {P}oincar\'e{} inequalities for piecewise
	polynomial functions.
	\newblock {\em Electron. Trans. Numer. Anal.},  {\bf 18} (2004), 42--48.
	
	\bibitem{BreSco08}
	S.~C. Brenner and L.~R. Scott.
	\newblock {\em The mathematical theory of finite element methods}, volume~15 of
	{\em Texts in Applied Mathematics}.
	\newblock Springer, New York, third edition, 2008.
	
	\bibitem{Bre11}
	H. Brezis.
	\newblock {\em Functional analysis, {S}obolev spaces and partial differential
		equations}, Universitext.
	\newblock Springer, New York, 2011.
	
	\bibitem{CheMaoShe20}
	H.~Chen, J.~Mao, and J.~Shen.
	\newblock Optimal error estimates for the scalar auxiliary variable
	finite-element schemes for gradient flows.
	\newblock {\em Numer. Math.},  {\bf 145} (2020), 167--196.
	
	\bibitem{ChuNie20}
	O.~Chubykalo-Fesenko and P.~Nieves.
	\newblock {\em Landau-Lifshitz-Bloch Approach for Magnetization Dynamics Close
		to Phase Transition}, pages 867--893.
	\newblock Springer International Publishing, Cham, 2020.
	
	\bibitem{ChuNowChaGar06}
	O.~Chubykalo-Fesenko, U.~Nowak, R.~W. Chantrell, and D.~Garanin.
	\newblock Dynamic approach for micromagnetics close to the {C}urie temperature.
	\newblock {\em Phys. Rev. B},  {\bf 74} (2006), 094436.
	
	\bibitem{CroTho87}
	M.~Crouzeix and V.~Thom\'{e}e.
	\newblock The stability in {$L_p$} and {$W^1_p$} of the {$L_2$}-projection onto
	finite element function spaces.
	\newblock {\em Math. Comp.},  {\bf 48} (1987), 521--532.
	
	\bibitem{DouDupWah74}
	J.~Douglas, Jr., T.~Dupont, and L.~Wahlbin.
	\newblock The stability in {$L\sp{q}$} of the {$L\sp{2}$}-projection into
	finite element function spaces.
	\newblock {\em Numer. Math.},  {\bf 23} (1974/75), 193--197.
	
	\bibitem{Gar97}
	D.~A. Garanin.
	\newblock {F}okker-{P}lanck and {L}andau-{L}ifshitz-{B}loch equations for
	classical ferromagnets.
	\newblock {\em Phys. Rev. B},  {\bf 55} (1997), 3050--3057.
	
	\bibitem{GolJiaLe25}
	B.~Goldys, C.~Jiao, and K.-N. Le.
	\newblock {Numerical method and error estimate for stochastic
		{L}andau–{L}ifshitz–{B}loch equation}.
	\newblock {\em IMA J. Numer. Anal.},  {\bf 45} (2025), 1821--1867.
	
	\bibitem{GuiLiWan22}
	X.~Gui, B.~Li, and J.~Wang.
	\newblock Convergence of renormalized finite element methods for heat flow of
	harmonic maps.
	\newblock {\em SIAM J. Numer. Anal.},  {\bf 60} (2022), 312--338.
	
	\bibitem{HouQia23}
	D.~Hou and Z.~Qiao.
	\newblock An implicit-explicit second-order {BDF} numerical scheme with
	variable steps for gradient flows.
	\newblock {\em J. Sci. Comput.},  {\bf 94} (2023), Paper No. 39, 22.
	
	\bibitem{Le16}
	K.~N. Le.
	\newblock Weak solutions of the {L}andau-{L}ifshitz-{B}loch equation.
	\newblock {\em J. Differential Equations},  {\bf 261} (2016), 6699--6717.
	
	\bibitem{LeSoeTra24}
	K.-N. Le, A.~L. Soenjaya, and T.~Tran.
	\newblock The {L}andau--{L}ifshitz--{B}loch equation in polytopal domains:
	Unique existence and finite element approximation.
	\newblock {\em IMA J. Numer. Anal.} (2026), \href{https://doi.org/10.1093/imanum/drag002}{https://doi.org/10.1093/imanum/drag002}.
	
	\bibitem{LeyLi21}
	D.~Leykekhman and B.~Li.
	\newblock Weak discrete maximum principle of finite element methods in convex
	polyhedra.
	\newblock {\em Math. Comp.},  {\bf 90} (2021), 1--18.
	
	\bibitem{LiShe20}
	X.~Li and J.~Shen.
	\newblock On a {SAV}-{MAC} scheme for the {C}ahn-{H}illiard-{N}avier-{S}tokes
	phase-field model and its error analysis for the corresponding
	{C}ahn-{H}illiard-{S}tokes case.
	\newblock {\em Math. Models Methods Appl. Sci.},  {\bf 30} (2020), 2263--2297.
	
	\bibitem{LiSheLiu21}
	X.~Li, J.~Shen, and Z.~Liu.
	\newblock New {SAV}-pressure correction methods for the {N}avier-{S}tokes
	equations: stability and error analysis.
	\newblock {\em Math. Comp.},  {\bf 91} (2021), 141--167.
	
	\bibitem{LiWanShe22}
	X.~Li, W.~Wang, and J.~Shen.
	\newblock Stability and error analysis of {IMEX} {SAV} schemes for the
	magneto-hydrodynamic equations.
	\newblock {\em SIAM J. Numer. Anal.},  {\bf 60} (2022), 1026--1054.
	
	\bibitem{LinThoWah91}
	Y.~P. Lin, V.~Thom\'{e}e, and L.~B. Wahlbin.
	\newblock Ritz-{V}olterra projections to finite-element spaces and applications
	to integrodifferential and related equations.
	\newblock {\em SIAM J. Numer. Anal.},  {\bf 28} (1991), 1047--1070.
	
	\bibitem{PuYan22}
	X.~Pu and L.~Yang.
	\newblock Global smooth solutions for the {L}andau-{L}ifshitz-{B}loch equation
	with helicity term.
	\newblock {\em Appl. Math. Lett.},  {\bf 133} (2022), Paper No. 108215, 7.
	
	\bibitem{RanSco82}
	R.~Rannacher and R.~Scott.
	\newblock Some optimal error estimates for piecewise linear finite element
	approximations.
	\newblock {\em Math. Comp.},  {\bf 38} (1982), 437--445.
	
	\bibitem{SheXuYan18}
	J.~Shen, J.~Xu, and J.~Yang.
	\newblock The scalar auxiliary variable ({SAV}) approach for gradient flows.
	\newblock {\em J. Comput. Phys.},  {\bf 353} (2018), 407--416.
	
	\bibitem{SheXuYan19}
	J.~Shen, J.~Xu, and J.~Yang.
	\newblock A new class of efficient and robust energy stable schemes for
	gradient flows.
	\newblock {\em SIAM Rev.},  {\bf 61} (2019), 474--506.
	
	\bibitem{Soe25c}
	A.~L. Soenjaya.
	\newblock Energy-stable finite element approximation of the
	{L}andau--{L}ifshitz--{B}loch equation below the {C}urie temperature.
	\newblock {\em J. Sci. Comput.},  {\bf 104} (2025), Paper No. 50.
	
	\bibitem{Soe24}
	A.~L. Soenjaya.
	\newblock Mixed finite element methods for the
	{L}andau--{L}ifshitz--{B}aryakhtar and the regularised
	{L}andau--{L}ifshitz--{B}loch equations in micromagnetics.
	\newblock {\em J. Sci. Comput.},  {\bf 103} (2025), Paper No. 65.
	
	\bibitem{Soe25}
	A.~L. Soenjaya.
	\newblock Numerical analysis of the {L}andau--{L}ifshitz--{B}loch equation with
	spin-torques.
	\newblock {\em ESAIM Math. Model. Numer. Anal.},  {\bf 60}, no. 3 (2026), 1451--1501.
	
	\bibitem{Soe26}
	A.~L. Soenjaya.
	\newblock Strong convergence of finite element schemes for the stochastic {L}andau--{L}ifshitz--{B}loch equation.
	\newblock {\em IMA J. Numer. Anal.} (2026), \href{https://doi.org/10.1093/imanum/drag040}{https://doi.org/10.1093/imanum/drag040}.
	
	\bibitem{Soe26s}
	A.~L. Soenjaya.
	\newblock The {L}andau-{L}ifshitz-{B}loch equation with spin diffusion:
	{G}lobal strong solution and finite element approximation.
	\newblock {\em Numer. Methods Partial Differential Equations},  {\bf 42} (2026), Paper No. e70070, 24 pp.
	
	\bibitem{SoeTra23}
	A.~L. Soenjaya and T.~Tran.
	\newblock Global solutions of the {L}andau--{L}ifshitz--{B}aryakhtar equation.
	\newblock {\em J. Differential Equations},  {\bf 371} (2023), 191--230.
	
	\bibitem{WanHuaWan21}
	M.~Wang, Q.~Huang, and C.~Wang.
	\newblock A second order accurate scalar auxiliary variable ({SAV}) numerical
	method for the square phase field crystal equation.
	\newblock {\em J. Sci. Comput.},  {\bf 88} (2021), Paper No. 33, 36.
	
	\bibitem{YanYiChe24}
	J.~Yang, N.~Yi, and Y.~Chen.
	\newblock Optimal error estimates of a {SAV}-{FEM} for the
	{C}ahn-{H}illiard-{N}avier-{S}tokes model.
	\newblock {\em J. Comput. Appl. Math.},  {\bf 438} (2024), Paper No. 115577,
	28.
	
	\bibitem{ZhaYua22}
	T.~Zhang and J.~Yuan.
	\newblock Unconditional stability and optimal error estimates of {E}uler
	implicit/explicit-{SAV} scheme for the {N}avier-{S}tokes equations.
	\newblock {\em J. Sci. Comput.},  {\bf 90} (2022), Paper No. 1, 20.
	
\end{thebibliography}

\newcommand{\noopsort}[1]{}\def\cprime{$'$}
\def\soft#1{\leavevmode\setbox0=\hbox{h}\dimen7=\ht0\advance \dimen7
	by-1ex\relax\if t#1\relax\rlap{\raise.6\dimen7
		\hbox{\kern.3ex\char'47}}#1\relax\else\if T#1\relax
	\rlap{\raise.5\dimen7\hbox{\kern1.3ex\char'47}}#1\relax \else\if
	d#1\relax\rlap{\raise.5\dimen7\hbox{\kern.9ex \char'47}}#1\relax\else\if
	D#1\relax\rlap{\raise.5\dimen7 \hbox{\kern1.4ex\char'47}}#1\relax\else\if
	l#1\relax \rlap{\raise.5\dimen7\hbox{\kern.4ex\char'47}}#1\relax \else\if
	L#1\relax\rlap{\raise.5\dimen7\hbox{\kern.7ex
			\char'47}}#1\relax\else\message{accent \string\soft \space #1 not
		defined!}#1\relax\fi\fi\fi\fi\fi\fi}

\end{document}